\newtheorem{thm}{\bf Theorem}[section]
\newtheorem{eg}[thm]{\bf Example}
\newtheorem{prop}[thm]{\bf Proposition}
\newtheorem{cor}[thm]{\bf Corollary}
\newtheorem{mydef}[thm]{\bf Definition}
\newtheorem{lem}[thm]{\bf Lemma}
\newenvironment{claim}[1]{\par\noindent\underline{Claim:}\space#1}{}
\newenvironment{claimproof}[1]{\par\noindent\underline{Proof:}\space#1}{\hfill $\blacksquare$}
\theoremstyle{remark}
\newtheorem{rem}[thm]{\bf Remark}
\newcommand{\CC}{\mathbb{C}}
\newcommand{\ZZ}{\mathbb{Z}}
\newcommand{\NN}{\mathbb{N}}
\newcommand{\br}{\mathbf{r}}
\newcommand{\Mon}{\mathrm{Mon}}
\newcommand{\VG}{V/\!/_\theta G}
\newcommand{\VT}{V/\!/_\theta T}
\newcommand{\LM}{\mathrm{LM}}
\newcommand{\sh}{\mathrm{shape}}
\DeclareMathOperator{\Rep}{\mathrm{Rep}}
\DeclareMathOperator{\add}{\mathrm{add}}
\DeclareMathOperator{\Hom}{\mathrm{Hom}}
\DeclareMathOperator{\GL}{\mathrm{GL}}
\DeclareMathOperator{\Proj}{\mathrm{Proj}}
\DeclareMathOperator{\Gr}{\mathrm{Gr}}
\DeclareMathOperator{\Sym}{\mathrm{Sym}}
\DeclareMathOperator{\Cox}{\mathrm{Cox}}
\DeclareMathOperator{\Pic}{\mathrm{Pic}}
\DeclareMathOperator{\Mat}{\mathrm{Mat}}
\DeclareMathOperator{\sign}{\mathrm{sign}}
\DeclareMathOperator{\SI}{\mathrm{SI}}
\DeclareMathOperator{\id}{\mathrm{id}}
\renewcommand{\emptyset}{\varnothing}
\newcolumntype{C}[1]{>{\centering\let\newline\\\arraybackslash\hspace{0pt}}m{#1}}
\title[]{Quiver semi-invariants and SAGBI bases}
\author[L.~Heuberger]{L.~Heuberger}
\address{Liana Heuberger \newline \indent Institut de Mat\'ematiques de Marseille, Aix-Marseille Universit\'e\newline \indent
3 Pl. Victor Hugo, 13331 Marseille Cedex 3}
\email{liana.heuberger@univ-amu.fr}
\author[E.~Kalashnikov]{E.~Kalashnikov}
\address{Elana Kalashnikov \newline \indent  Department of Pure Mathematics, University of Waterloo,\newline \indent
Waterloo, Canada N2L 3G1}
\email{e2kalash@uwaterloo.ca}
\thanks{LH is supported by Research Project Grant RPG-2021-149 from The Leverhulme Trust. EK is supported by an NSERC Discovery Grant. The authors thank the anonymous referee for very helpful comments.}
\begin{document}
\begin{abstract}
We introduce a new combinatorial structure of \emph{linked tableaux},  which generalize the semi-standard tableaux that index a SAGBI basis of the Pl\"ucker coordinate ring of a flag variety. We show that linked tableaux index Domokos-Zubkov semi-invariants, which span the semi-invariant ring of a quiver. The semi-invariant ring of a quiver coincides in many cases with the Cox ring of an associated quiver moduli space.  We show that these semi-invariants satisfy straightening laws inherited from Pl\"ucker coordinates.  In the case of the generalized Kronecker quiver, we prove that the semi-invariants associated to semi-standard linked tableaux are a (possibly infinite) SAGBI basis.  For the generalized Kronecker quiver with dimension vector $(2,2)$, we show that the SAGBI basis is finite and describe it explicitly. \end{abstract}
\maketitle

\section{Introduction}\label{sec:intro}
Many interesting varieties can be constructed as GIT quotients of vector spaces $V/\!/G$:  important examples include toric varieties, Grassmannians, and flag varieties. When $G=T$ is a torus, the quotient is a toric variety, for which many questions are highly computable. For a general group $G$, and GIT quotient $V/\!/G$, one can replace $G$ with a maximal torus $T$. The associated toric variety $V/\!/T$ is sometimes called the abelianization of $V/\!/G.$ 

Understanding the connection between  $V/\!/G$ and $V/\!/T$  can allow one to extend the combinatorial control of toric varieties to other, more general, varieties. This strategy, which can be called the \emph{Abelian/non-Abelian correspondence} has been applied successfully in many different contexts: cohomology \cite{martin,stromme}, $I$ and $J$ functions \cite{abelian, Webb2018, kalashnikov}, and quantum cohomology \cite{gukalashnikov}. The inspiration for this paper was to ask whether this strategy could give insight in understanding the Cox ring of $V/\!/G.$  The Cox ring of a variety $X$, informally defined, is the algebra generated by all global sections of all line bundles of $X$. 

Examples of Cox rings include such well-studied objects as the Pl\"ucker coordinate ring of the Grassmannian and the homogeneous coordinate ring of a toric variety. Finding generators of  the Cox ring -- or even just generators of sub-algebras of the Cox ring -- gives information about the variety; for example, through understanding maps into projective space. One important application is that a SAGBI basis of the Cox ring induces a toric degeneration of the variety. Toric degenerations extend the combinatorial control of toric varieties to other varieties and also have important applications in mirror symmetry. 

In this paper, we study the Cox ring of quiver moduli spaces via quiver semi-invariants and the Abelian/non-Abelian correspondence. Our main results are:
\begin{itemize}
\item We introduce pairs of linked tableaux as a combinatorial structure indexing Domokos--Zubkov \cite{domokos} generators of the Cox ring. We show that these generators satisfy straightening laws inherited from Pl\"ucker coordinates.
\item In the case of a generalized Kronecker quiver, we show that semi-standard and primitive linked tableaux index a (possibly infinite) SAGBI basis of the Cox ring.
\item We prove that this SAGBI basis is finite in the case of the generalized Kronecker quiver with dimension vector $(2,2)$. 
\end{itemize}
We describe these results in more detail in Theorems \ref{thm:intro1}-\ref{thm:intro3}.

Let $G$ be a reductive group acting on a complex vector space $V$. In this paper, we always work over $\CC$. A stability condition $\theta$, which is a character of $G$, gives a GIT quotient $\VG=V^{ss}/G$ where the semi-stable locus $V^{ss}$ is determined by $\theta$. Given another character $\alpha \in \chi(G)$, we can consider the projection
\[ V^{ss} \times \CC/G \to V^{ss}/G= \VG.\]
The action of $G$ on the product is the diagonal action defined by the representation $V$ of $G$ on the first factor and the character $\alpha$ on the second. When the GIT quotient is smooth, by Kempf's descent lemma $V^{ss} \times \CC/G$ is a line bundle over $\VG$, which we denote $L_\alpha$. Any element $f \in \CC[V]$ such that, for all $g \in G$,
\[g \cdot f = \chi_\alpha(g) f,\]
defines a section of $L_\alpha$. 
Such an $f$ is called a \emph{$G$-semi-invariant} of weight $\alpha.$

In many cases of interest -- such as quiver flag varieties --  the Cox ring of $\VG$ is the algebra of semi-invariants. This holds \cite{crawunpub} whenever the unstable locus $V^{us}=V\backslash V^{ss}$ has codimension at least two and
\[\chi(G) \cong \Pic(\VG).\]
The results in this paper concern the semi-invariant ring, and hence (when they coincide) the Cox ring. 

The Weyl group $W$ acts on $\chi(T)$ and $\chi(G) \cong \chi(T)^W.$   If a polynomial $f \in \CC[V]$ is $G$-semi-invariant, then it is $T$-semi-invariant of the same weight. Moreover, since $T$ scales the monomials in $\CC[V]$, it follows that if $f$ is $G$-semi-invariant, it is a sum of $T$-semi-invariant monomials. 

This suggests three immediate questions:
\begin{enumerate}
\item Given a $T$-semi-invariant monomial with weight in $\chi(T)^W$, can we construct a $G$-semi-invariant polynomial using the Weyl group?
\item Which $T$-semi-invariant monomials appear as summands of a $G$-semi-invariant polynomial?
\item Given a term order, which $T$-semi-invariant monomials appear as the leading term in a $G$-semi-invariant polynomial?
\end{enumerate}
This paper is motivated by these questions in the setting of quiver moduli spaces. A quiver moduli space is a GIT quotient $V/\!/_\theta G$ constructed from a quiver $Q$.  The abelianization $V/\!/_\theta T$ is the quiver moduli space associated to the abelianized quiver $Q^{ab}$ of $Q$.  

We introduce some notation to state our results. Let $Q=(Q_0,Q_1)$ be a quiver, so that $Q_0=\{0,1,\dots,\rho\}$ is the set of vertices, and $Q_1$ the set of arrows. All quivers in this paper are assumed to be acyclic. There are two maps $s,t: Q_1 \to Q_0$ taking arrows to their source and targets respectively. Given a choice of dimension for each vertex $\br=(r_0,\dots,r_\rho) \in \NN^{\rho+1},$ set 
\[\Rep(Q,\br)=\oplus_{a \in Q_1} \Hom(\CC^{r_{s(a)}},\CC^{r_{t(a)}}).\]
There is a natural $\GL(\br)=\prod_{i=0}^\rho \GL(r_i)$ action on $\Rep(Q,\br)$. The GIT quotient 
\[\Rep(Q,\br)/\!/_\theta \GL(\br)\]
is the quiver moduli space associated to $(Q,\br)$ and was first considered in \cite{king}. Semi-invariants of the $\GL(\br)$-action on $\Rep(Q,\br)$ are called \emph{quiver semi-invariants}, and have been amply studied  \cite{schofieldbergh, schofield, derksen, domokos}. 

In the first part of the paper, we explain how to produce the semi-invariants  of Domokos--Zubkov \cite{domokos} from this perspective, answering the first question above. We show how a new combinatorial structure, \emph{linked tableaux pairs}, can be used to index these semi-invariants, which we call DZ semi-invariants. Strictly speaking, these semi-invariants agree with the semi-invariants of \cite{domokos} only in the bipartite case, but are otherwise a simple generalization replacing arrows with paths, see \S \ref{sec:dzcomparison}.  Tableaux are key in understanding the structure of the Cox ring of flag varieties, and linked tableaux pairs play an analogous role for quiver moduli spaces. 

For the definition of a linked tableaux pair, see Definition \ref{def:linked}, but roughly speaking these are pairs $(\underline{T}^-,\underline{T}^+)$, where $\underline{T}^\pm$ is a tuple of rectangular tableaux, one for each vertex in the quiver, together with a \emph{link}. The entries of the tableaux are of the form $Pj$, where $P$ is a path in the quiver and $j$ is an integer. The link is a bijection $\sigma$ between the labels of $\underline{T}^-$ and $\underline{T}^+$, that takes a label $P j$ in the $k^{th}$ row of $T^-_{s(P)}$ to a label $P k$ in the $j^{th}$ row of $T^+_{t(P)}$, as illustrated below: 
\[\begin{tikzpicture}[scale=0.6,every node/.style={scale=0.8}]
\draw (0,0) rectangle (5,5);
\draw (2,1) rectangle (3,2);
\node[] at (2.5,-1) {$T^-_{s(P)}$};
\node[] at (-1,1.5) {$k^{th} \text{ row}$};
\node[] at (2.5,1.5) {$P j$};

\draw (7,0) rectangle (12,5);
\draw (9,3) rectangle (10,4);
\node[] at (9.5,-1) {$T^+_{t(P)}$};
\node[] at (13,3.5) {$j^{th} \text{ row}$};
\node[] at (9.5,3.5) {$P k$};
\draw[->] (3.1,1.5) -- (8.9,3.5) node[midway,above left] {$\sigma$};

\end{tikzpicture}\]
By ordering the paths of the quiver, and then using the lex ordering, we can extend the notion of semi-standard to linked tableaux pairs. 

We exploit the generalization of tableaux to linked tableaux to extend results for Grassmannians to quiver moduli spaces. In particular, we show that the analogue of the straightening laws for Grassmannians hold in this context:
\begin{thm}\label{thm:intro1} Domokos--Zubkov semi-invariants  satisfy straightening laws coming from linked tableaux. Linked tableaux with weakly increasing columns and rows span the semi-invariant ring. 
\end{thm}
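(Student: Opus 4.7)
The plan is to reduce the statement to the classical straightening law for Pl\"ucker coordinates, applied locally at each vertex of the quiver, and then to show that the link structure transforms coherently under such straightening moves.

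First, I would unpack the DZ semi-invariant attached to a linked tableau pair $(\underline{T}^-,\underline{T}^+)$. At each vertex $i \in Q_0$, the tableaux $T_i^{\pm}$ record the rows/columns of a block of a large matrix $M(\underline{T}^-,\underline{T}^+)$ whose entries are compositions of the path coordinates specified by the entries $Pj$ and by the link $\sigma$. The DZ semi-invariant is then (up to sign) the product of the corresponding minors, one per vertex; equivalently, it is a product of Pl\"ucker coordinates on the appropriate Grassmannians $\Gr(r_i, N_i)$, where $N_i$ counts the path-labels incident to vertex $i$. From this description, the straightening law is visible as follows: a pair of columns in some $T_i^{\pm}$ that violates the semi-standard condition corresponds to a non-standard pair of minor arguments at vertex $i$, to which the classical Pl\"ucker straightening (Doubilet--Rota--Stein / Hodge) applies. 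Each resulting term is again a product of minors, hence (after repackaging) a DZ semi-invariant attached to a new tableau pair whose $i$th component has been modified in the standard way.

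The key check, which I expect to be the main obstacle, is that the straightening at vertex $i$ preserves the linked structure: whenever two path-labels get swapped between rows/columns of $T_i^{\pm}$, the bijection $\sigma$ has to be updated so that labels of the form $Pj$ and their partners $Pk$ on the opposite side of the link still satisfy the row/column compatibility condition in Definition~\ref{def:linked}. Because $\sigma$ is defined purely by the indices $(j,k)$ and the vertex through which the path passes, a permutation applied to one side of the link can be absorbed into the labels on the other side, yielding a well-defined collection of linked tableau pairs appearing in the straightening relation. I would formalize this by showing that the Plücker relation at vertex $i$, viewed on the $M(\underline{T}^-,\underline{T}^+)$ side, lifts term-by-term to an identity of DZ semi-invariants indexed by linked tableau pairs, with explicit signs matching those of the classical relation.

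Finally, to get the spanning statement, I would order linked tableau pairs by a monomial order compatible with the lex order on entries, and argue by noetherian induction. By the Domokos--Zubkov theorem, the DZ semi-invariants span the semi-invariant ring, so it suffices to straighten any individual DZ semi-invariant. If a pair $(\underline{T}^-,\underline{T}^+)$ has a column pair violating the semi-standard condition at some vertex, the straightening relation rewrites the associated semi-invariant as a sum of DZ semi-invariants whose indexing pairs are strictly smaller in the chosen order. Iterating (which terminates by well-foundedness) expresses the original semi-invariant as a combination of DZ semi-invariants whose tableaux have weakly increasing rows and columns, proving the spanning claim.
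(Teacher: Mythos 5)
The first half of your argument --- establishing the straightening laws themselves --- is essentially the paper's route. You correctly identify the crucial point: a Pl\"ucker relation applied at a single vertex $p$ must be shown to be compatible with the link, and the mechanism is the one you describe, namely that the straightening permutation moves labels around inside $T^\pm_p$ while the induced change on the opposite side only alters second digits, so the two actions commute. One caveat: $f_{\underline{T}^\pm}$ is not a single product of minors but a \emph{signed sum} over the opposite group $G^\mp$ of such products, so the termwise Pl\"ucker identity must be checked to commute with the full $G^\mp$-action and then summed over that group; your ``absorb the permutation into the labels on the other side'' is the right idea, but the summation step is what turns a relation among products of minors into a relation among semi-invariants.

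The spanning argument has a genuine gap. Your noetherian induction asserts that each straightening step produces indexing pairs that are ``strictly smaller in the chosen order,'' but straightening at vertex $p$ does not only modify $T^\pm_p$: via the link it changes the second digits of labels at every other vertex, and this can destroy semi-standardness at vertices you have already straightened. There is no obvious well-founded global order that strictly decreases under such a move, and the paper does not claim that fully semi-standard linked pairs span in general. The paper's resolution is a two-stage argument: first straighten all target tableaux until they are semi-standard, then straighten all source tableaux. The second stage perturbs only the second digits of the target labels, and the \emph{weak} semi-standardness condition --- which is what the theorem actually asserts --- depends only on the first digits, so it is stable under these perturbations. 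This is precisely why the conclusion is stated for tableaux with weakly increasing rows and columns rather than for semi-standard ones; your single-pass iteration should be restructured along these lines.
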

In the special case of the generalized Kronecker quiver, we can use linked tableaux to answer the third question as well. The main motivation in considering this question is finding SAGBI bases of quiver semi-invariants. SAGBI bases were introduced by Robbiano and Sweedler \cite{Robbiano1}; for more background, see \cite{Robbiano2}.  SAGBI basis of an algebra $R \subset \CC[V]$ with a term order is a set $S$ of elements in $R$ such that the leading term of any element in $R$ can be written as a product of the leading terms of elements from $S$.  

One powerful application of SAGBI bases is in producing toric degenerations. Toric degenerations extend the combinatorial control we have over toric varieties   to other varieties; for example, they are used in various mirror symmetry constructions. From this perspective, the central example of a toric degeneration is the Gelfand--Cetlin toric degeneration of type A flag varieties, due to Gonciulea--Lakshmibai \cite{GL}. This is a SAGBI basis degeneration where the basis is indexed by semi-standard Young tableaux.  

We show that semi-standard linked tableaux index a (possibly infinite) SAGBI basis for the generalized Kronecker quiver. The generalized Kronecker quiver (henceforth, just Kronecker quiver) is the quiver with two vertices $0$ and $1$, with $K$ arrows from $0$ to $1$:
 \[\begin{tikzcd}
 0 \arrow[r,"K"] & 1
\end{tikzcd} \]
 Fix a dimension vector $\br=(r_0,r_1).$ 
 
 Linked tableaux pairs are simpler in this case, as the tuples of tableaux are of length one and all paths are in fact arrows, which we label from $1,\dots,K$.  Let $\alpha^-$ be the partition of shape $r_0 \times a r_1$ and  $\alpha^+$  the partition of shape $r_1 \times a r_0$, for $a \in \frac{1}{\gcd(r_0,r_1)} \ZZ.$ Let $T^-$ be a tableau of shape $\alpha^-$, filled with entries of the form $ij$, where 
 \[i \in \{1,\dots,K\}, \hspace{5mm} j \in \{1,\dots,r_1\}.\]
  Let $T^+$ be a tableau of shape $\alpha^+$, filled with entries of the form $ij$, where 
  \[i \in \{1,\dots,K\}, \hspace{5mm} j \in \{1,\dots,r_0\}.\]
 
A link $\sigma$ is a bijection between the labels of $T^-$ and the labels of $T^+$ taking a label $iq$ in the $p^{th}$ row of $T^-$ to a label $ip$ in the $q^{th}$ row of $T^+.$ Define an ordering on the labels by setting $ip < jq$ if $i <j$ or $i=j$ and $p < q.$ In this way, we extend the notion of semi-standard to these tableaux with double entries: a linked pair is \emph{semi-standard} if both $T^+$ and $T^-$ are semi-standard. 
\begin{thm}\label{thm:intro2} Let $Q$ be the Kronecker quiver. Then pairs of primitive semi-standard linked tableaux index a SAGBI basis for the algebra of quiver semi-invariants.
\end{thm}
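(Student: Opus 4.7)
The plan is to follow the classical SAGBI argument for Pl\"ucker coordinates on the Grassmannian, with pairs of linked tableaux playing the role of semi-standard Young tableaux, and then to reduce from all semi-standard pairs to the primitive ones.

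First, I would fix a term order on $\CC[\Rep(Q,\br)]$ extending the label order $ip<jq$ iff $i<j$, or $i=j$ and $p<q$, on the coordinate functions of $\Rep(Q,\br)$. With this choice, the initial task is to compute the leading monomial of the DZ semi-invariant $f_{(T^-,T^+)}$ attached to a semi-standard linked pair $(T^-,T^+)$. By analogy with the fact that the leading term of a $k\times k$ minor is its main diagonal, I expect $\operatorname{in}(f_{(T^-,T^+)})$ to be the ``diagonal monomial'' read off the boxes of $T^-$ (equivalently, via the link $\sigma$, of $T^+$). Semi-standardness of $(T^-,T^+)$ should ensure that this diagonal is the minimal-index term in the block-determinant expansion defining $f_{(T^-,T^+)}$.

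Second, I would invoke the straightening law of Theorem \ref{thm:intro1} to express an arbitrary semi-invariant as a $\CC$-linear combination of DZ semi-invariants indexed by semi-standard linked pairs, and then verify that distinct semi-standard pairs yield distinct leading monomials. The latter is a combinatorial fact: a semi-standard linked pair can be reconstructed from the sequence of labels appearing along its diagonal. Together these imply that the leading term of any element of the semi-invariant ring equals the leading term of some semi-standard DZ semi-invariant, so the initial algebra is generated by these leading monomials.

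The third step passes from semi-standard to primitive: show that if $(T^-,T^+)$ is semi-standard but not primitive, then $\operatorname{in}(f_{(T^-,T^+)})$ factors as a product of leading monomials of two strictly smaller semi-standard linked pairs. This amounts to checking that the splitting operation implicit in the definition of primitive respects both the linked structure (the bijection $\sigma$ restricts to each piece) and the semi-standard condition, and that the resulting monomial factorization lifts to a product of DZ semi-invariants. I expect the main obstacle to be precisely this step: in the Grassmannian case one factors a non-primitive tableau by splitting off a column, but here the link $\sigma$ ties entries of $T^-$ to entries of $T^+$, so any admissible splitting must simultaneously respect the row-column structure on both tableaux. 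Identifying the right primitivity condition and verifying that it cooperates with the leading-term factorization is the heart of the argument; the remainder is a reasonably direct adaptation of the standard Grassmannian SAGBI proof via straightening.
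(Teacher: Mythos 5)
Your overall architecture matches the paper's: compute the leading monomial of $f_{T^\pm}$ for a semi-standard linked pair (this is Proposition \ref{prop:ltachieved}, and your ``diagonal monomial'' guess is correct), show that every semi-invariant's leading term arises this way, and then pass to primitive pairs. But your second step has a genuine gap. The straightening laws of Theorem \ref{thm:intro1} do \emph{not} express an arbitrary semi-invariant as a combination of fully semi-standard linked pairs: straightening the source tableaux permutes the second digits of the labels in the target tableaux (and vice versa), so after straightening both sides one only obtains \emph{weakly} semi-standard pairs --- the paper proves exactly this weaker spanning statement and remarks explicitly that ``the resulting condition on leading terms is weaker.'' The paper's actual route (Proposition \ref{prop:lt}) avoids straightening of linked pairs altogether: a Kronecker semi-invariant is simultaneously semi-invariant for $\GL(r_0)$ and $\GL(r_1)$, hence lies in both the Pl\"ucker algebra of the column-concatenation $A_C$ and that of the row-concatenation $A_R$; the classical Grassmannian SAGBI result applied to each forces the leading monomial to equal $\Mon(T^-)=\Mon(T^+)$ for semi-standard $T^-$ and $T^+$ separately, and the link is then read off directly from the variables $x^i_{jk}$ appearing in that monomial. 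The spanning statement (Theorem \ref{thm:lm}) then follows by leading-term induction, not by straightening. Note also that a given pair $(T^-,T^+)$ may admit several links giving genuinely different semi-invariants, so ``reconstructing the pair from its diagonal'' requires fixing the semi-standard link as in the paper's remark after Proposition \ref{prop:lt}; without that convention linear independence fails.

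Your third step, by contrast, manufactures work where the paper has none: Definition \ref{defn:prim} \emph{defines} a semi-standard pair to be primitive precisely when its leading monomial does not factor as $\LM(f_{T_1^\pm})\LM(f_{T_2^\pm})$ for semi-standard linked pairs $T_i^\pm$. With that definition the SAGBI statement is an immediate corollary of Theorem \ref{thm:lm}: any leading term in the ring is $\LM(f_{T^\pm})$ for some semi-standard pair, and if that pair is not primitive one factors its leading monomial and iterates. The compatibility of intrinsic tableau splittings with the link, which you correctly identify as delicate, is needed only for the combinatorial characterization of primitive pairs and the finiteness result of Theorem \ref{thm:intro3} (the $r_0=r_1=2$ case), not for Theorem \ref{thm:intro2} itself.
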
 
For the definition of \emph{primitive} semi-standard linked tableaux pairs see Definition \ref{defn:prim}. It is not clear from the combinatorics that the set of primitive semi-standard linked tableaux is finite. We do not know of any examples where it is not finite, and we show that it holds for the following family: 
\begin{thm} \label{thm:intro3} Let $Q$ be the Kronecker quiver with dimensions $r_0=r_1=2$. Then this SAGBI basis is finite.
\end{thm}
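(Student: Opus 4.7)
The plan is to control the combinatorics of primitive semi-standard linked tableau pairs in the $(r_0,r_1)=(2,2)$ case and show they form a finite set, from which finiteness of the SAGBI basis follows by Theorem~\ref{thm:intro2}. First, I would translate the data concretely: a semi-standard linked tableau pair of shape $(2\times k,\,2\times k)$ over the Kronecker quiver with $K$ arrows amounts to two $2\times k$ tableaux with entries $ij$, $i\in\{1,\dots,K\}$, $j\in\{1,2\}$, together with a link $\sigma$. The combinatorial content of $\sigma$ is captured by multiplicity matrices $C_i=(c_i^{p,q})$, where $c_i^{p,q}$ counts labels $iq$ in row $p$ of $T^-$, equivalently labels $ip$ in row $q$ of $T^+$. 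Semi-standardness forces strictly increasing columns over an alphabet of $2K$ symbols, so there are at most $\binom{2K}{2}$ possible column types, and $\sum_i C_i$ has all row and column sums equal to $k$.

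Second, I would unpack the primitivity condition from Definition~\ref{defn:prim} in this case. A pair should be primitive precisely when there is no column index $1\le m<k$ at which both tableaux split into semi-standard pairs of shapes $(2\times m,\,2\times m)$ and $(2\times(k-m),\,2\times(k-m))$ compatibly with the link (so that $\sigma$ restricts to a link on each part). I would rephrase this decomposition criterion in terms of the matrices $C_i$ and the column sequences, converting primitivity into an explicit condition on how multiplicities and columns interact.

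Third, I would argue that for $k$ large enough, depending only on $K$, a decomposition is always available. Because the number of possible columns is finite and the column multi-set of $T^-$ and of $T^+$ are each subject to the row/column-sum constraints from $\sum_i C_i$, a pigeonhole argument on the column sequence produces a position where both tableaux separately split into semi-standard pieces. Exploiting semi-standardness and the structure of $\sigma$, I would then show that $\sigma$ can be arranged to restrict compatibly at such a position. Enumerating primitive pairs up to the resulting bound gives the explicit description of the finite SAGBI basis.

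The main obstacle will be the link-compatibility step. Since $\sigma$ is a global bijection, it can route labels from the left columns of $T^-$ into the right columns of $T^+$, so even when each tableau admits a column cut individually, $\sigma$ may obstruct a genuine decomposition as a linked pair. The crux is therefore to show that among the many candidate splits produced by the pigeonhole argument, one can always find a column position at which the link restricts to bijections on the two halves; this requires carefully tracking how the multiplicities $c_i^{p,q}$ are distributed across columns and using semi-standardness to straighten $\sigma$ across the chosen cut.
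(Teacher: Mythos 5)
There is a genuine gap, and it sits exactly where you flag ``the main obstacle.'' First, your reformulation of primitivity is not correct: a splitting in the sense of Definition \ref{defn:prim} is a partition of the label multiset into two sub-multisets, each forming a semi-standard linked pair, and these sub-tableaux are in general \emph{not} contiguous blocks of columns. In the paper's own examples the removed labels are scattered across the tableau (a full column of $T^+$ may correspond to labels sitting in different columns and different rows of $T^-$), and the remaining labels only close up into a semi-standard tableau after a shift. So searching for ``a column index $1\le m<k$ at which both tableaux split'' misses most splittings, and a pair can fail to be primitive without admitting any such cut.

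Second, the pigeonhole step is not merely unresolved; it is the entire content of the theorem, and it is not clear it can work in the form you propose. Repetition of column types in $T^-$ does not produce a splitting: to remove a column of $T^-$ you must simultaneously remove the two linked labels from $T^+$, and these may occupy positions whose removal violates semi-standardness of what remains of $T^+$ (compare the second pair in the example opening \S\ref{sec:sagbik}, whose leading term admits no factorization at all). The paper instead proves a structural classification: after ruling out backwards and downwards arrows (Lemma \ref{lem:nobarr}), crossing configurations (Lemma \ref{lem:lem2}), and the configurations of Propositions \ref{prop:cases} and \ref{prop:cases2}, it uses the ``initial set'' construction to show that any primitive pair has exactly one label with second digit $1$ in the first row and one with second digit $2$ in the second row, forcing the zig-zag shape of Theorem \ref{thm:2thm}. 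Finiteness then follows not from pigeonhole on column types but from the two chains of strict inequalities $a<c_1<b_2<\cdots<b_l<d$ and $a<b_1<c_2<\cdots<c_l<d$ inside the alphabet $\{1,\dots,K\}$, which bound the width by $K-1$. To repair your approach you would need, at minimum, a correct combinatorial criterion for when a scattered sub-multiset of labels yields a semi-standard linked sub-pair on both sides; supplying that criterion and exploiting it is what the bulk of \S\ref{sec:2case} does.
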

%\begin{eg} Let $Q$ be the Kronecker quiver with $k=3$ and $r_0=r_1=2.$ Then if $(T^-,T^+)$ is a primitive semi-standard linked pair, $T^+$ is one of the following tableaux: 
%\[\ytableausetup{centertableaux}
%\begin{ytableau}
%i1 \\
%j2\\
%\end{ytableau}  \hspace{2mm} 1 \leq i \leq j \leq 3,
%\hspace{10mm}
%\begin{ytableau}
%11 &22\\
%21 & 32\\
%\end{ytableau}\]
%\end{eg}

\paragraph{\textbf{Plan of the paper}} In \S \ref{sec:background}, we give the necessary background on quiver moduli spaces and quiver semi-invariants. In \S \ref{sec:orbited}, we describe the DZ semi-invariants and show in \S \ref{sec:straightening} that they satisfy straightening laws arising from Pl\"ucker coordinates. In \S \ref{sec:Kronecker} and \S \ref{sec:sagbik}, we discuss the SAGBI basis of the Kronecker quiver, and describe it explicitly in some small examples.  In \S \ref{sec:2case} we prove that this basis is finite in the $r_0=r_1=2$ case. In the last section of the paper, \S \ref{sec:example}, we use the SAGBI basis to produce a toric degeneration of Fano quiver moduli space, and give a conjectural Laurent polynomial mirror for this variety. 
\section{Background}\label{sec:background}
As in the introduction, let $Q=(Q_0,Q_1)$ be a quiver. Label the vertices $Q_0=\{0,1,\dots,\rho\}$.  All quivers in this paper are assumed to be acyclic, so we can assume that if $a \in Q_1,$ then $s(a)<t(a).$ We fix a dimension vector $\br=(r_0,\dots,r_\rho) \in \NN^{\rho+1}.$ Recall that the set of representations of this quiver of dimension $\br$ is 
\[\Rep(Q,\br)=\oplus_{a \in Q_1} \Hom(\CC^{r_{s(a)}},\CC^{r_{t(a)}}).\]
The change of basis action is given by $\GL(\br)=\prod_{i=0}^\rho \GL(r_i)$. 

Given $\theta \in \chi(\GL(\br))$, there is a GIT quotient
\[M_\theta(Q,\br):=\Rep(Q,\br)/\!/_\theta G.\]
The character group $\chi(\GL(\br))$ is naturally identified with $\ZZ^{\rho+1}$. When $r_0=1$ and 
\[\theta=(-\sum_{i=1}^\rho r_i,1\dots,1)\]
 under this identification, the resulting GIT quotient is a smooth projective fine moduli space, called a \emph{quiver flag variety} \cite{craw}. Examples of quiver flag varieties include type A flag varieties and Grassmannians. 
\subsection{Quiver semi-invariants}
The group $\GL(\br)$ acts naturally on the elements of $\CC[\Rep(Q,\br)].$ If $Q$ is acyclic, there are no non-trivial $\GL(\br)$-invariant polynomials in the ring $\CC[\Rep(Q,\br)].$ Instead, we can consider semi-invariants of this group action.
We choose coordinates on $\CC[\Rep(Q,\br)]$:
\[x^a_{ij}, \hspace{2mm} a \in Q_1, i \in \{1,\dots,r_{s(a)}\}, j \in \{1,\dots,r_{t(a)}\}.\] 
\begin{mydef} Let $f$ be a polynomial in $\CC[\Rep(Q,\br)].$ Then $f$ is a \emph{semi-invariant} of weight $\alpha \in \chi(\GL(\br))$ if, for all $g \in \GL(\br)$,
\[g \cdot f= \chi_\alpha(g) f.\] The ring of semi-invariants is denoted 
\[\SI(Q,\br),\]
and is graded by $\chi(\GL(\br))$. For $\alpha \in \chi(\GL(\br))$, we denote the $\alpha$-graded piece as
\[\SI(Q,\br)_\alpha.\]
\end{mydef}

Apart from semi-invariants being a natural construction to consider in invariant theory, they also play an important role in the study of the related GIT quotients. The GIT quotient $M_\theta(Q,\br)$ is constructed as
\[\Proj(\oplus_{k \in \ZZ_{\geq 0}} \SI(Q,\br)_{k\theta}).\]

As described in the introduction, given a character $\alpha \in \chi(\GL(\br))$, we can consider 
\[ \Rep(Q,\br)^{ss} \times \CC/ \GL(\br) \to \Rep(Q,\br)^{ss}/\GL(\br)=\Rep(Q,\br)/\!/_\theta\GL(\br).\]
When the GIT quotient is smooth, by Kempf's descent lemma this is a line bundle, which we denote $L_\alpha$. For $M_\theta(Q,\br)$ with sufficiently good properties, this induces isomorphisms
\[\chi(\GL(\br)) \cong \Pic(M_\theta(Q,\br)),\]
\[\Gamma(M_\theta(Q,\br),L_\alpha) \cong \SI(Q,\br)_\alpha,\]
and hence
\[\Cox(M_\theta(Q,\br)) \cong \SI(Q,\br).\]
These isomorphisms hold when the unstable locus has codimension at least 2 and there are no strictly semi-stable points \cite{crawunpub}. In particular, they hold for quiver flag varieties \cite{craw}.

Studying the semi-invariant ring can give information on embeddings of $M_\theta(Q,\br)$ arising from ample line bundles. Another application, as will be discussed, is in finding toric degenerations.
\subsection{Abelianization}
Since a quiver GIT quotient is of the form $\VG$, we can consider its abelianization $\VT$, where $T \subset G$ is the diagonal maximal torus. In \cite{kalashnikov}, this is shown to be another quiver GIT quotient: the one associated to the \emph{abelianized} quiver $Q^{ab}$ of $Q$, with dimension vector $(1,\dots,1)$. 

The abelianization $Q^{ab}$ of a quiver $Q$ is formed by replacing a vertex $p$ with dimension $r_i$ with $i$ separate vertices $p_1,\dots,p_{r_i}$ all of dimension one. We call $p_i$ a \emph{lift} of $p$ to $Q^{ab}$. If there is an arrow $a: p \to q$ between vertex $p$ and $q$ in $Q$, there is an arrow $a_{ij}:p_i \to q_j$ in $Q^{ab}$ for each lift $p_i$ of $p$ and $q_j$ of $q$.   For example, consider the quiver
\begin{center}
\includegraphics[scale=0.5]{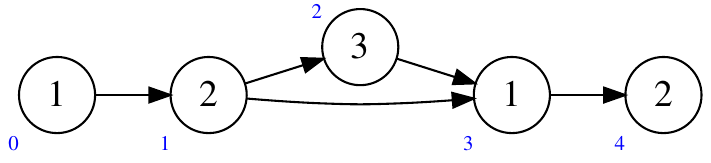}.
\end{center}
where the number inside a vertex $i$ indicates the dimension $r_i$ associated to $i$. The abelianization of this quiver is
\begin{center}
\includegraphics[scale=0.5]{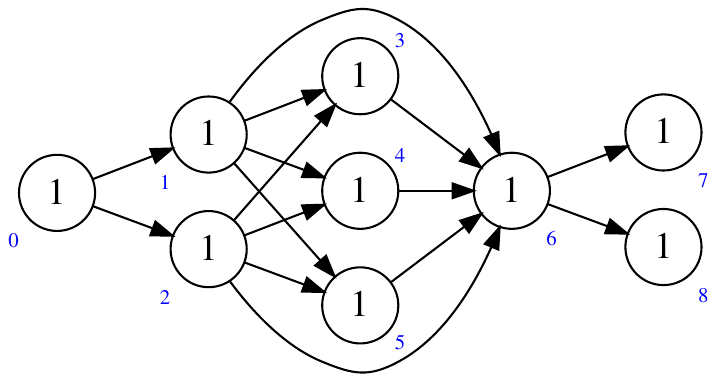}.
\end{center}
The two vector spaces
\[\Rep(Q,\br) =\Rep(Q^{ab},(1,\dots,1))\]
are naturally identified.  A lift of an arrow $a:p\to q$ in $Q$ to an arrow $a_{ij}: p_i \to q_j$ in $Q^{ab}$ corresponds to the coordinate $x^a_{ij}$, in $\CC[\Rep(Q,\br)]$.

Denote the $T$-weight of $x^a_{ij}$ as
\[D^T_{a_{ij}} \in \chi(T).\]
The monomial $\prod_{a_{ij} \in S} x^a_{ij}$ for some collection of arrows $S$ has weight
 \[\sum_{a_{ij} \in S} D^T_{a_{ij}}.\]
 \begin{mydef} Let $S$ be a collection of arrows in $Q^{ab}$. We say that $S$ is \emph{Weyl-invariant} if the difference between the number of arrows in and out of each vertex is constant over the lifts of $p$ for all $p \in Q_0.$ That is, for every vertex $p \in Q_0$,  $n_{p_i}=n_{p_j}$ for all lifts $p_i$ and $p_j$ of $p$ to $Q^{ab}$, where
 \[n_{p_i}:=\#\{a \in S: t(a)=p_i\}-\#\{a \in S: s(a)=p_i\}.\]
 \end{mydef}
The set of monomial semi-invariants of $T$ with Weyl-invariant weight are in one-to-one correspondence with the Weyl-invariant arrow sets. Given an arrow set, we can partition it into paths. 
 \begin{mydef} Let $S$ be a collection of paths in $Q^{ab}$. We say that $S$ is Weyl-invariant if for every vertex $p \in Q_0$, both the number of paths in and the number of paths out of each vertex is constant over the lifts of $p$ for all $p \in Q_0.$ \end{mydef}
 Note that Weyl-invariance for path sets is defined to be stronger than Weyl-invariance for arrow sets. 
 \begin{lem} For every Weyl-invariant arrow set $S$, there is at least one Weyl-invariant path set $T$ such that the collection of all arrows in all paths of $T$ is $S$. 
 \end{lem}
 \begin{proof}
 Consider the lifts of a vertex $p_1,\dots,p_{r_i}$ of a vertex in $Q$.  By connecting the arrows coming in and out each vertex until there are only $n_{p_i}$ paths that have source or target this vertex, we can build a path set that is Weyl-invariant (and in fact bipartite). 
 \end{proof}
 
An arrow $a \in Q_1$ is associated to the matrix of coordinates on $\Rep(Q,\br)$:
\[M_a:=(x^a_{ij}) \in \Mat(r_{t(a)} \times r_{s(a)}).\]
For a path $P$ in $Q$, let 
\begin{equation}\label{eq:pathmatrix}
M_P:=\prod_{a \in P} M_a.
\end{equation}
\section{Determinantal semi-invariants}\label{sec:orbited}
\subsection{The Grassmannian}
Before describing the general case, we consider the simplest example: the Grassmannian. In this section we produce semi-invariants of the Grassmannian by introducing an action of the Weyl group on tableaux. 

The Grassmannian $\Gr(n,r)$ is the quiver flag variety associated to the quiver
\begin{center}
\includegraphics[scale=0.5]{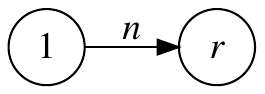}
\end{center}

In order to quotient by an effective group action, we can construct the Grassmannian as the GIT quotient $\Mat(r\times n)/\!/\GL(r)$ (by choosing an identification of $(\CC^*\times\GL(r))/\CC^* \simeq\GL(r)$).

The abelianization of this quiver has one source vertex and $r$ target vertices, each with $n$ arrows from the source. For example, the abelianization of the quiver of $\Gr(5,3)$ is 
\begin{center}
\includegraphics[scale=0.25]{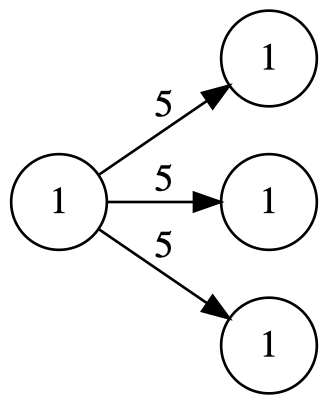}.
\end{center}
 The Weyl group is $\Sym_r$ and permutes the $r$ target vertices. Let $x^i_j$ be the coordinate associated to the arrow $a^i_j$, where $a^i_j$ is the $i^{th}$ arrow into the $j^{th}$ vertex. A collection $S$ is W-invariant if there is the same number of arrows to each target vertex. 
 
 Consider collections with one arrow into each target vertex, i.e. sets $\{a^{i_1}_1,\dots,a^{i_r}_r\}$ for any choice $i_1,\dots,i_r\in \{1,\ldots,n\}.$ This is associated to the monomial 
 \[\prod_{j=1}^r x^{i_j}_j .\]
 The Weyl group acts on this monomial by permuting the lower indices. To make this a semi-invariant, we certainly need it to be semi-invariant under the Weyl group action, so we can consider the sum
  \[\sum_{\sigma \in \Sym_r} \sign(\sigma) \sigma \cdot \prod_{j=1}^r x^{i_j}_j =\sum_{\sigma \in \Sym_r} \sign(\sigma)  \prod_{j=1}^r x^{i_j}_{\sigma(j)}=\det([x^{i_j}_k]). \]
 This sum is now not just semi-invariant for the Weyl group, but actually for all of $G$, as these are of course the Pl\"ucker coordinates of the Grassmannian. One can visualize this on $Q^{ab}$ by drawing the arrows, and then letting $W$ act on them in the natural way. 
 
 Now consider collections with two arrows into each target vertex. Label such a set 
 \[\{a^{i_1}_1,\dots,a^{i_r}_r,a^{k_1}_1,\dots,a^{k_r}_r\}.\]
  We get the product of monomials 
  \[\prod_{j=1}^r x^{i_j}_j  x^{k_j}_j.\]
  We could symmetrize this monomial using the Weyl group action:
\[\sum_{\sigma \in \Sym_r} \sign(\sigma)^2 \sigma \cdot \prod_{j=1}^r x^{i_j}_j  x^{k_j}_j,\]
but this is not a $G$-semi-invariant. However, notice that there exists a natural action of two copies of $W$: one can act by one copy of $W$ on the $a^{i_j}_j$ and one copy on the $a^{k_j}_j.$ Doing this produces a semi-invariant, as what we obtain is a product of two weight one semi-invariants. 

However, we have used more data than just the set of arrows: we have partitioned the arrows into two separate sets, each of which are Weyl-invariant. To record this, we use a tableau. Each column in the tableau is of length $r$, and corresponds to one set that the Weyl group acts on:
\[ \ytableausetup{centertableaux}
\begin{ytableau}
i_1 & k_1\\
i_2 & k_2 \\
\vdots &\vdots \\
i_r & k_r\\
\end{ytableau}
\]
To produce semi-invariants of weight $l$, we write down an $r \times l$ tableau. Each such tableau $T$ gives a monomial $m_T$: a box labeled $i$ in the $j^{th}$ row contributes a variable $x^i_j.$ There is a $W^{\times l}$ action on tableaux of shape $r \times l$, where the first copy of $W$ permutes the labels in the first column, the second in the second column, and so forth.  The semi-invariant associated to $T$ is
\[\sum_{\underline{w} \in W^{\times l}} \sign(\underline{w}) m_{\underline{w} \cdot T}.\]
 
\subsection{The general case}
Let $Q$ be an acyclic quiver with dimension vector $\br$, and let $T$ be the maximal diagonal torus in the group $\GL(\br)$. Let $Q^{ab}$ denote the abelianization of the quiver.  Recall that $T$-semi-invariant monomials of Weyl-invariant weights are in one-to-one correspondence with Weyl-invariant collections of arrows. The goal of this section is to associate semi-invariants of $Q$ to Weyl-invariant collections of arrows in $Q^{ab}$. As in the Grassmannian case, we need tableaux and actions of copies of the Weyl group on tableaux to do this.

We do this in two steps:
\begin{enumerate}
\item Partition the arrows into a set of paths $S_P$.
\item Assign to the set of paths two tuples of partitions that are linked.
\end{enumerate}
\subsubsection{Partitioning the arrows} 
Fix a Weyl-invariant path set $S$. The monomial associated to the underlying arrow set has a Weyl-invariant weight, and let 
\[ (w_p)_{p \in Q_0}= (w^+_p)- (w^-_p), \hspace{3mm} w^\pm_p \ge 0\]
be this weight in $\chi(\GL(\br))$ under the identification $\chi(\GL(\br)) \cong \chi(T)^W.$ Then 
\begin{itemize}
\item $w^-_p=\#\{P \in S: s(P)=p_i \text{ for some lift $p_i$ of $p$}\}$, 
\item $w^+_p=\#\{P \in S: t(P)=p_i \text{ for some lift $p_i$ of $p$}\}$.
\end{itemize}
\begin{lem} Let $S$ be a Weyl-invariant path set with weight $(w_p)_{p\in Q_0}\in \chi(G)$, using the identification $\chi(G)\cong \chi(T)^W$. Then $\sum_{p} r_p w_p=0.$
\end{lem}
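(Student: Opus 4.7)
The plan is to reduce this to the elementary observation that, for any finite collection of paths in a directed graph, summing ``paths ending at a vertex $v$ minus paths starting at $v$'' over all vertices gives zero, since each path contributes $+1$ at its target and $-1$ at its source. Weyl-invariance will then collapse the sum from vertices of $Q^{ab}$ to vertices of $Q_0$, producing the factor $r_p$ from the number of lifts of $p$.

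Concretely, for each vertex $v$ of $Q^{ab}$ I would set
\[ m_v := \#\{P \in S : t(P) = v\} - \#\{P \in S : s(P) = v\}, \]
and observe that $\sum_v m_v = 0$ by the telescoping above, each path being counted once with a $+$ at its (unique) target and once with a $-$ at its (unique) source. Weyl-invariance of $S$ says precisely that $m_v$ depends only on the image $p \in Q_0$ of the vertex $v$. This common value equals (up to the sign convention fixed by writing $w_p = w^+_p - w^-_p$) the $\chi(G)$-component $w_p$ under the identification $\chi(T)^W \cong \chi(G)$: each coordinate $x^a_{ij}$ associated to an arrow $a_{ij} \colon p_i \to q_j$ in $Q^{ab}$ contributes to the $T$-weight of the monomial a single unit at its source lift and a single opposite unit at its target lift, and telescoping along paths yields the $T$-weight $\sum_v m_v\, \chi_v$, where $\chi_v$ is the character of $T$ on the $v$-th factor. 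Under $\chi(T)^W \cong \chi(G)$, the $W$-invariance forces the coefficient to be constant over the lifts of each $p$, and that constant is $w_p$.

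Grouping the sum $\sum_v m_v$ by the fibres of the canonical map from the vertices of $Q^{ab}$ to $Q_0$, each $p \in Q_0$ contributing exactly $r_p$ lifts, one obtains
\[ 0 \;=\; \sum_{v} m_v \;=\; \sum_{p \in Q_0} r_p\, w_p, \]
which is the claim. The only point that needs genuine care is the bookkeeping in the middle step identifying the per-lift count $m_v$ with $w_p$ under $\chi(T)^W \cong \chi(G)$, and in particular verifying the sign/convention so that the formula $w_p = w^+_p - w^-_p$ records the per-lift net contribution rather than the total across lifts; beyond this convention-tracking the lemma is a telescoping identity, so I do not expect a real obstacle.
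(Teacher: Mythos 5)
Your proof is correct and is essentially the same as the paper's: the paper also writes $\sum_{p} r_p w_p = \sum_{p_i \in Q^{ab}} n_{p_i}$ with $n_{p_i}$ defined exactly as your $m_v$, and concludes by the same telescoping observation that each path contributes one target and one source. The only difference is that you spell out the identification of the per-lift count with $w_p$ under $\chi(T)^W\cong\chi(G)$, which the paper leaves implicit.
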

\begin{proof}
We observe that
 \[\sum_{p\in Q_0} r_p w_p=\sum_{p_i \in Q^{ab}} n_{p_i},\]
where $n_{p_i}$ is, as before,
 \[n_{p_i}:=\#\{P \in S: t(P)=p_i\}-\#\{P \in S: s(P)=p_i\}.\]
Since each path has exactly one source and one target, it follows that this sum is $0.$
\end{proof}
\subsubsection{Linked pairs of tableaux}
We now define \emph{linked pairs} of tableaux, which will play an analogous role to the tableaux in the Grassmannian case. Here, we need to have pairs of tableaux because there exist non-trivial Weyl group actions on both the source and target of every arrow. 

Let $\mathcal{P}$ be the set of all paths in the quiver $Q$. Choose a labeling of this set $\{P_1,\dots,P_N.\}$ Given a path $P$ in $Q^{ab}$, we can project it onto a path in $Q.$

Choose any weight 
\[ (w_p)_{p \in Q_0}= (w^+_p)- (w^-_p), \hspace{3mm} w^\pm_p \ge 0\]
satisfying $\sum_{p} r_p w_p=0.$ For this weight, we define two tuples of partitions: \emph{target partitions}
\[(r_p \times w_p^+)_{p \in Q_0}\]
and \emph{source partitions},  
\[(r_p \times w_p^-)_{p \in Q_0}.\]
\begin{rem} Note that for a particular $p \in Q_0$, both $w_p^+$ and $w_p^-$ may be non-zero. 
\end{rem}
\begin{lem} The total number of boxes in the source partitions equals the total number of boxes in the target partitions.
\end{lem}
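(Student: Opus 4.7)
The plan is to derive this lemma as an immediate corollary of the previous lemma, which established that $\sum_{p \in Q_0} r_p w_p = 0$ for any Weyl-invariant path set with weight $(w_p)_{p \in Q_0}$.

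First I would unpack the definitions: the source partition at vertex $p$ has shape $r_p \times w_p^-$, hence contains $r_p w_p^-$ boxes, and similarly the target partition at $p$ contains $r_p w_p^+$ boxes. The total number of boxes in the source partitions is therefore $\sum_{p \in Q_0} r_p w_p^-$, and analogously $\sum_{p \in Q_0} r_p w_p^+$ for the target partitions.

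Next I would recall that by construction $w_p = w_p^+ - w_p^-$, so
\[
\sum_{p \in Q_0} r_p w_p = \sum_{p \in Q_0} r_p w_p^+ - \sum_{p \in Q_0} r_p w_p^-.
\]
Applying the previous lemma, which gives $\sum_{p \in Q_0} r_p w_p = 0$, immediately yields $\sum_{p \in Q_0} r_p w_p^+ = \sum_{p \in Q_0} r_p w_p^-$, which is exactly the desired equality of box counts.

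There is no real obstacle here; the statement is essentially a rewording of the previous lemma under the decomposition $w_p = w_p^+ - w_p^-$ with $w_p^\pm \geq 0$. The only subtlety worth remarking on is that this decomposition is well-defined precisely because the weight comes from a Weyl-invariant path set, where $w_p^-$ counts paths with source projecting to $p$ and $w_p^+$ counts paths with target projecting to $p$; the preceding lemma's proof already exploits exactly this bookkeeping via the path-counting identity $\sum_{p_i} n_{p_i} = 0$.
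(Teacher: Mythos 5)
Your proof is correct and is essentially identical to the paper's: both count $\sum_{p} r_p w_p^-$ and $\sum_{p} r_p w_p^+$ boxes and deduce equality from the decomposition $w_p = w_p^+ - w_p^-$ together with the preceding lemma's identity $\sum_{p} r_p w_p = 0$. No further comment is needed.
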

\begin{proof}
The total number of boxes in the source partitions is $\sum_{p} r_p w_p^-$. The total number of boxes in the target partitions is $\sum_{p} r_p w_p^+$. Then 
\[0=\sum_{p\in Q_0} r_p w_p=\sum_{p\in Q_0} r_p w_p^+-\sum_{p\in Q_0} r_p w_p^-\] and the result follows.
\end{proof}

Next, we consider tableaux with shapes given by the target and source partitions. That is, let  $(\underline{T}^+,\underline{T}^-)$ be two tuples of tableaux satisfying
\[\underline{T}^+=(T^+_p)_{p \in Q_0}, \hspace{5mm} \sh(T^+_p)=r_p \times w_p^+,\] 
and
\[\underline{T}^-=(T^-_p)_{p \in Q_0}, \hspace{5mm} \sh(T^-_p)=r_p \times w_p^-.\] 
The labels of $\underline{T}^\pm$ are given by pairs of integers:
\begin{enumerate}
\item $T^+_p$ is filled by entries of the form $ij$, where $i$ is the index of a path $P_i$ such that $t(P_i)=p$ and $j \in \{1,\dots,r_{s(P_i)}\}.$ 
\item$T^-_p$ is filled by entries of the form $ij$, where $s(P_i)=p$ and $j \in \{1,\dots,r_{t(P_i)}\}.$
\end{enumerate}
\begin{mydef}\label{def:linked} Let $\underline{T}^+$ and $\underline{T}^-$ be a pair of tableaux tuples filled as above. We say that $\underline{T}^+$ and $\underline{T}^-$ form a \emph{linked pair} if there is a bijection $\sigma$ from the set of boxes in $\underline{T}^+$ to the set of boxes in $\underline{T}^-$ satisfying the following. If $\sigma$ takes box $B_1$ in $T^+_p$ with label $ij$ to box $B_2$  in $T^-_q$ with label $kl$, then 
\begin{itemize}
\item $i=k$, so that $P_i$ is a path from $q \to p$, and $j \in \{1,\dots,r_{q}\}$, $l \in \{1,\dots,r_{p}\}.$
\item $B_1$ appears in the $l^{th}$ row of $T^+_p$, a tableau of shape $r_p \times w^+_p.$
\item $B_2$ appears in the $j^{th}$ row of $T^-_q$, a tableau of shape $r_q \times w^-_q.$
\end{itemize} 
\end{mydef}

\begin{eg}\label{eg:semiinv}
Consider the quiver
\[\begin{tikzpicture}[scale=1,every node/.style={scale=0.8}]
\filldraw (0,0) circle (3pt);
\filldraw (3,0) circle (3pt);
\filldraw (6,0) circle (3pt);
\draw[->] (0,0) to [bend right=15, above,"1"] (2.9,-0.1);
\draw[->] (0,0) to [bend left=15, "2",near end] (2.9,0.1);
\draw[->] (3,0) to ["7"] (6,0);
\draw[->] (0,0) to [bend right=20,above,"5"] (5.9,-0.1);
\draw[->] (0,0) to [bend left=20,above,"4"] (5.9,0.1);
\draw[->] (0,0) to [bend right=40,above,"6"] (6,-0.1);
\draw[->] (0,0) to [bend left=40,above,"3"] (6,0.1);
\end{tikzpicture}\]
 with dimension vector $(2,2,3)$. Label the arrows from $1,\dots,7$, ordered by their source vertex and then by their target, as drawn above. There are two paths given by concatenating arrows from vertices $0$ to $1$ with the arrow from $1$ to $2$, which we label $8,9$.

Consider the weight $[-2,-1,2]$. Let $\underline{T}^+=(\emptyset,\emptyset, T^+_2$), where
  \[T^+_2= \ytableausetup{centertableaux}
\begin{ytableau}
3 1 & 5 2\\
4 1 & 7 1\\
7 2 &  6 2\\
\end{ytableau}
.\]
Let $\underline{T}^-=(T^-_0,T^-_1,\emptyset)$, where 
  \[T^-_0= \ytableausetup{centertableaux}
\begin{ytableau}
3 1 & 4 2\\
6 3 & 5 1\\
\end{ytableau}
 \hspace{10mm} 
T^-_1 = \ytableausetup{centertableaux}
\begin{ytableau}
7 2 \\
7 3 \\
\end{ytableau}\]
This is a linked pair, with a unique link. 
\end{eg}

\subsubsection{From path sets to linked pairs of tableaux}
Next, we explain how to go from path sets to linked tableaux pairs. Let $S$ be a Weyl-invariant path set of $Q^{ab}$. As above, let $w$ be the Weyl-invariant weight of the associated monomial, and write 
\[w=(w_p)_{p \in Q_0}= (w^+_p)- (w^-_p)\]
where $w^+_p$ is the number of the paths in $S$ into a vertex in $Q^{ab}_0$ lifting $p$  and $w^-_p$ is the number of paths in $S$ out of a vertex in $Q^{ab}_0$ lifting $p$. Since the path set is Weyl-invariant, the choice of lifts doesn't matter. 

The number of paths in $S$ is the same as the total number of boxes in the target partitions for this weight, which is the same as the total number of boxes in the source partitions.  We use the paths to label the source and target partitions corresponding to the weight $w$ as follows. A path $P \in S$ projects to a path $P_i$ in $Q$.  Then the source of $P$ is a vertex $q_j, j \in \{1,\dots,r_q\}$ and the target of $P$ is a vertex $p_l, l \in \{1,\dots,r_p\}$.  The path $P$ then gives a label $i j$ which can be placed in $l^{th}$ row of the $q^{th}$ source tableau, and a label $i l$ which can be placed in the $j^{th}$ row of the $p^{th}$ target tableau. 

That is, for the path set $S$, we consider tuples of tableaux $\underline{T}^+=(T^+_p)$ and $(T^-_q)$ that satisfy the following two conditions:
\begin{itemize}
\item The labels in the $l^{th}$ row of $T^+_p$ are the set 
\[\{ij: P \in S \text{ with projection } P_i \text{ and } s(P)=q_j, t(P)=p_l.\}\]
\item The labels of the $j^{th}$ row of $T^-_q$ are the set
\[\{il: P \in S \text{ with projection } P_i \text{ and } s(P)=q_j, t(P)=p_l.\}\]
\end{itemize}
Then the two tuples are naturally a linked pair, using the path set $P$: the label in the target tableau $\underline{T}^+$ corresponding to $P$ is linked to the label in the source tableau $\underline{T}^-$ corresponding to $P$. 

\begin{eg}
\label{eg:semiinvpaths} Consider the quiver $Q$ and the linked pair of tableaux $\underline{T}^\pm$ from Example \ref{eg:semiinv}. Index the paths in the abelianization of $Q$ by $P^{jl}_{i}$, where $P^{jl}_{i}$ projects onto the path $P_i$ in $Q$, where the source of this path is the $j^{th}$ lift of $s(P_i)$, and its target is the $l^{th}$ lift of $t(P_i)$. The path set underlying the linked pairs of tableaux $\underline{T}^\pm$ is
\[ \{P^{11}_3, P^{12}_4, P^{23}_7, P^{21}_5, P^{12}_7,P^{23}_6\},\] which is Weyl-invariant.
\end{eg}

\subsubsection{Weyl-type group actions on linked pairs of tableaux}
As in the case of the Grassmannian, choosing a tuple of tableaux corresponding to a path set $S$ allows us to construct a $G$-semi-invariant out of the monomial with $W$-invariant weight that is given by $S$. The structure of linked pairs of tableaux allows us to define a more refined action of (copies of the) Weyl group on the monomial.

Let $\underline{T}^\pm$ be a linked tableaux pair, linked by $\sigma.$  Let
 \[ \Sym^+= \prod_{p \in Q_0} \Sym_{r_p}^{w^+_p}.\]
 %and denote elements of $\Sym^+$ as $\alpha^+=(\alpha_{p,i})_{p \in Q_0, i \in \{1,\dots,w^+_p\}}.$  
Let
 \[ \Sym^-= \prod_{p \in Q_0} \Sym_{r_p}^{w^-_p}.\]
 %and denote elements of $\Sym^-$ as $\alpha^-=(\alpha_{p,i})_{p \in Q_0, i \in \{1,\dots,w^-_p\}}.$ 
 Set $\sign(\alpha^\pm)$  to be the product of the signs of the permutations in $\alpha^\pm \in \Sym^\pm.$ 

Then both groups act on each of $\underline{T}^-$ and $\underline{T}^+.$ One action is given by permuting the rows, and one by permuting the labels, as we describe below.
\paragraph{\bf{Permuting the rows of $\underline{T}^\pm$}}
This is an action of $\Sym^\pm$ on $\underline{T}^\pm$.

There are $w^\pm_p$ columns of $T^\pm_p$, and $w^\pm_p$ copies of $\Sym_{r_p}$ in $\Sym^\pm.$ The action is defined by using the $l^{th}$ copy of $\Sym_{r_p}$ to permute the labels of the $l^{th}$ column of $T^\pm_p.$
\paragraph{\bf{Permuting the labels of $\underline{T}^\pm$}} This is an action of $\Sym^\pm$ on $\underline{T}^\mp$. Because $(\underline{T}^+,\underline{T}^-)$ are a linked pair, there are $r_p w_p^-$ labels $ij$ in $\underline{T}^+$ satisfying $s(P_i)=p.$ Each of the $w_p^-$ columns in $T^-_p$ corresponds to a set of labels in $\underline{T}^+$ of the form $\{i_1 1,\dots,i_{r_p} r_p\}$. The action of $\Sym^-$ on $T^+$ is defined by having the $l^{th}$ copy of $\Sym_{r_p}$ in $\Sym^-$ permute the second entry of the $l^{th}$ set of labels $\{i_1 1,\dots,i_{r_p} r_p\}$.
%%Consider the action of $\alpha^+ \in \Sym^+$ on $\underline{T}^+.$ This moves a box $B^+$ with label $ij$ in $T^+_p$ from row $l_1$ to $l_2$ in the same column. Under $\sigma$, box $B^+$ is mapped to some $B^-$ with label $i l_1$.The action of $\alpha^+$ on $\underline{T}^-$ is given by replacing the label $i l_1$ with label $i l_2.$ Similarly, $\Sym^-$ acts on the labels of $\underline{T}^+.$ 

Note that for all $\alpha^+ \in \Sym^+$,
\[(\alpha^+ \cdot \underline{T}^+,\alpha^+ \cdot \underline{T}^-)\]
is naturally a linked pair.
Similarly,
\[(\alpha^- \cdot \underline{T}^+,\alpha^- \cdot \underline{T}^-)\]
is a linked pair for all $\alpha^- \in \Sym^-.$

\begin{eg}
\label{eg:semiinvaction}
Consider the pair of linked tableaux $\underline{T}^\pm$ from Example \ref{eg:semiinv}, where the non-empty tableaux are:
 \[T^+_2= \ytableausetup{centertableaux}
\begin{ytableau}
3 1 & 5 2\\
4 1 & 7 1\\
7 2 &  6 2\\
\end{ytableau}
\]
and 
  \[T^-_0= \ytableausetup{centertableaux}
\begin{ytableau}
3 1 & 4 2\\
6 3 & 5 1\\
\end{ytableau}
 \hspace{10mm} 
T^-_1 = \ytableausetup{centertableaux}
\begin{ytableau}
7 2 \\
7 3 \\
\end{ytableau}\]

The group $\Sym^+=\Sym_3\times \Sym_3$ acts by permuting the columns of $T^+_2$. It also acts on the labels of $T_0^-$ and $T_1^-$ where the first copy of $\Sym_3$ acts by permuting the second digit of the labels in yellow, and the second copy of $\Sym_3$ acts by permuting the second digit of the labels in green:
  \[T^-_0= \ytableausetup{centertableaux}
\begin{ytableau}
*(yellow)3 1 & *(yellow)4 2\\
*(green)6 3 & *(green)5 1\\
\end{ytableau}
 \hspace{10mm} 
T^-_1 = \ytableausetup{centertableaux}
\begin{ytableau}
*(green)7 2 \\
*(yellow)7 3 \\
\end{ytableau}\]

\end{eg}

\begin{rem} Note that the actions of $\Sym^+$ and $\Sym^-$ on $T^+$ ($T^-$) commute. This follows from the definition: a label $ij$ in $T^+$ has its location changed by $\alpha \in \Sym^+$, and its second entry changed by $\beta \in \Sym^-$.

\end{rem}
%Then $\Sym^+$ acts on the labels of $\underline{T}^-$ and $\Sym^-$ on the labels of $\underline{T}^+$. This action is defined using $\sigma.$ Suppose we have a box $B^+$ in $T^+_p$ in the $(c^+)^{th}$ column of $T^+_p$ mapped under $\sigma$ to a box $B^_$  in the $(c^-)^{th}$ column of $T^-_q.$ Then given $\alpha^+ \in \Sym^+$, the label in $\alpha^+ \cdot T^-_q$ of box $B^-$ is given by applying $(q,c^+)$ component of $\alpha^+$ to $j$. Similarly, the label in $\alpha^- \cdot T^+_p$ in box $B^+$ is given by applying the $(p,c^-)$ component of $\sigma^-$ to $l$. 

\subsubsection{From linked pairs of tableaux to semi-invariants}
 $T$-semi-invariant monomials of Weyl-invariant weights are in one-to-one correspondence with Weyl-invariant arrow sets.  Given such an arrow set, we choose a Weyl-invariant path set, and for this path set, we choose a linked tableaux pair. The final stage is to explain how to produce a semi-invariant from a linked pair.   
 
Recall that for a path $P$ in $Q$, there is a matrix $M_P$ (see \eqref{eq:pathmatrix}) of size $r_{t(P)} \times r_{s(P)}.$  Let $(M_P)_{ij}$ be the $ij$ entry of $M_P$. To a tuple of tableaux $\underline{T}^+$, we associate 
\[\Mon(\underline{T}^+)=\prod (M_{P_i})_{lj},\]
where the product is over all labels $ij$ appearing in some $T^+_p$, and $l$ is the row in which this label appears. 

We analogously define 
\[\Mon(\underline{T}^-)=\prod (M_{P_i})_{lj},\]
where the product is over all labels $il$ appearing in some $T^-_p$, and $j$ is the row in which this label appears. 

The following lemma follows from the definition:
\begin{lem} If $\underline{T}^\pm$ is a linked pair, then
\[\Mon(\underline{T}^+)=\Mon(\underline{T}^-).\]
\end{lem}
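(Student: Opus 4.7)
The statement is essentially a bookkeeping identity: the bijection $\sigma$ witnessing that $\underline{T}^\pm$ is a linked pair matches the factors in $\Mon(\underline{T}^+)$ with the factors in $\Mon(\underline{T}^-)$, and one checks that matched factors are equal as entries of the same path matrix.

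The plan is as follows. First, I would recall that for a path $P_i$ in $Q$ from $q$ to $p$, the matrix $M_{P_i}$ has size $r_p \times r_{q}$, with rows indexed by $\{1,\dots,r_p\}$ and columns indexed by $\{1,\dots,r_q\}$. Then I would unpack the two monomial definitions side by side: a box in $T^+_p$ with label $ij$ sitting in row $l$ contributes $(M_{P_i})_{lj}$ to $\Mon(\underline{T}^+)$; a box in $T^-_q$ with label $il$ sitting in row $j$ contributes $(M_{P_i})_{lj}$ to $\Mon(\underline{T}^-)$. Note the index roles swap: what is a row in one tableau is the second digit of the label in its partner, and vice versa.

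The key step is to use Definition~\ref{def:linked} to line up these two descriptions. Given a box $B_1$ in $T^+_p$ with label $ij$, let $B_2 = \sigma(B_1)$; by the definition of a link, $B_2$ lies in $T^-_q$ where $q = s(P_i)$, it carries the label $il$ where $l$ is the row of $B_1$ in $T^+_p$, and it sits in row $j$ of $T^-_q$. Therefore $B_2$ contributes exactly $(M_{P_i})_{lj}$ to $\Mon(\underline{T}^-)$, which coincides with the contribution of $B_1$ to $\Mon(\underline{T}^+)$. Since $\sigma$ is a bijection between the full set of boxes of $\underline{T}^+$ and the full set of boxes of $\underline{T}^-$, grouping the factors of $\Mon(\underline{T}^+)$ and $\Mon(\underline{T}^-)$ by $\sigma$ shows the two products are identical monomials in the variables $x^a_{ij}$.

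There is no serious obstacle here; the only thing to be careful with is consistently tracking the swap of the row-index with the second digit of the label across the bijection, and checking that the ranges $\{1,\dots,r_p\}$ and $\{1,\dots,r_q\}$ for $l$ and $j$ match the sizes of $M_{P_i}$. Once the matching of factors is verified, the lemma follows immediately.
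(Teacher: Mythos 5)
Your proof is correct and is exactly the verification the paper has in mind: the paper states only that the lemma ``follows from the definition,'' and your pairing of each box $B_1$ with $\sigma(B_1)$ and the check that both contribute the factor $(M_{P_i})_{lj}$ is the intended bookkeeping. No issues.
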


\begin{mydef}
 Define
 \[f_{\underline{T}^\pm}=\sum_{\alpha^- \in \Sym^-,\alpha^+\in \Sym^+} \sign(\alpha^-)  \sign(\alpha^+) \Mon( \alpha^- \cdot \alpha^+ \cdot \underline{T}^+).\] 
 \end{mydef}
 \begin{rem}
  Since $\alpha^- \cdot \alpha^+ \cdot \underline{T}^+$ and $\alpha^- \cdot \alpha^+ \cdot \underline{T}^-$ are canonically linked, this can be equivalently written as 
   \[f_{\underline{T}^\pm}=\sum_{\alpha^- \in \Sym^-,\alpha^+\in \Sym^+} \sign(\alpha^-)  \sign(\alpha^+) \Mon( \alpha^- \cdot \alpha^+ \cdot \underline{T}^-).\] 
 \end{rem}
As written, it is not clear that this polynomial is a semi-invariant of $Q$. We show this in the following
\begin{thm}\label{thm:isinv} For any linked pair, $f_{\underline{T}^\pm}$ is a semi-invariant of $Q$.
\end{thm}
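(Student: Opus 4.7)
The plan is to check semi-invariance one Weyl factor at a time: for each vertex $p \in Q_0$ and each $g_p \in \GL(r_p)$, verify that $g_p \cdot f_{\underline{T}^\pm} = \det(g_p)^{w_p} f_{\underline{T}^\pm}$, and then multiply across vertices to obtain the full character $\chi_w(g) = \prod_p \det(g_p)^{w_p}$. Because $Q$ is acyclic, $M_P$ transforms as $g_p M_P$ when $t(P)=p$, as $M_P g_p^{-1}$ when $s(P)=p$, and is unchanged otherwise (a single path cannot have $s(P)=t(P)$, and intermediate $g_p$-factors cancel in the usual way). The actions of $G^+$ and $G^-$ on $\underline{T}^+$ commute, since they act on disjoint data (the row position of a label $ij$ versus the value of its second digit $j$), so in the defining sum I can factor $\alpha^\pm = \alpha^\pm_p \cdot \alpha^\pm_{\ne p}$ and carry out the $\alpha^\pm_p$-sums first.

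The heart of the argument is a single-column computation. Fix a column $c$ of $T^+_p$ whose rows $k = 1,\ldots,r_p$ carry labels $i_k j_k$; every $P_{i_k}$ has target $p$. After $g_p$-action, the $\Sym_{r_p}$-factor acting on column $c$ contributes
\[\sum_{\sigma \in \Sym_{r_p}} \sign(\sigma) \prod_{k=1}^{r_p}(g_p M_{P_{i_k}})_{\sigma(k),\, \alpha^-(j_k)}.\]
Expanding $g_p M_{P_{i_k}}$ and applying the Leibniz formula, only tuples of inner row indices that form permutations of $\{1,\ldots,r_p\}$ survive, and the sum collapses to $\det(g_p) \cdot \det(A_c)$, where $A_c$ has $k$-th column equal to the $\alpha^-(j_k)$-th column of $M_{P_{i_k}}$. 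This $\det(A_c)$ is exactly the column-$c$ contribution that the $\alpha^+_p$-sum on $f_{\underline{T}^\pm}$ (with no $g_p$) produces. Dually, for a column $c'$ of $T^-_p$ (whose $r_p$ paths all have source $p$), the $\Sym_{r_p}$-symmetrization of the second digits of the linked labels in $T^+$ combines with the $g_p^{-1}$ on the right of each $M_P$ to yield $\det(g_p)^{-1}$ times the corresponding column-$c'$ determinant of $f_{\underline{T}^\pm}$.

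Multiplying over the $w^+_p$ columns of $T^+_p$ and the $w^-_p$ columns of $T^-_p$ collects a total factor $\det(g_p)^{w_p^+ - w_p^-} = \det(g_p)^{w_p}$, while the remaining outer sum over $\alpha^\pm_{\ne p}$, together with the factors coming from paths not touching $p$, reproduces $f_{\underline{T}^\pm}$ verbatim. Thus $g_p \cdot f_{\underline{T}^\pm} = \det(g_p)^{w_p} f_{\underline{T}^\pm}$, and semi-invariance for all of $\GL(\br)$ follows by running the argument at each vertex. The chief obstacle is index bookkeeping: one must verify that the column-$c$ determinant produced by $g_p$-symmetrization has exactly the same dependence on $\alpha^-$ (and on the untouched path matrices) as the column-$c$ determinant in the plain $\alpha^+_p$-sum on $f_{\underline{T}^\pm}$, so that the $\alpha^\pm_{\ne p}$-sum factors off without mixing in residual $g_p$-entries.
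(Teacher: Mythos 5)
Your proof is correct and takes essentially the same route as the paper's: the paper packages your single-column Leibniz collapse into a lemma expressing $f_{\underline{T}^\pm}$ both as $\sum_{\alpha^-}\sign(\alpha^-)D(\alpha^-\underline{T}^+)$ and as $\sum_{\alpha^+}\sign(\alpha^+)D(\alpha^+\underline{T}^-)$ --- products of full-size minors assembled from columns (resp.\ rows) of the path matrices --- and then reads off semi-invariance under $\prod_{p:w_p^+>0}\GL(r_p)$ and $\prod_{p:w_p^->0}\GL(r_p)$ from multiplicativity of the determinant, which is exactly your $\det(g_pA_c)=\det(g_p)\det(A_c)$ step. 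Your vertex-by-vertex and column-by-column bookkeeping is a more explicit unwinding of the same mechanism.
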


The proof will follow from a straightforward lemma. 

Let $C_i(M_P)$ be the $i^{th}$ column of $M_P$ and $R_i(M_P)$ the $i^{th}$ row. Given  a tableau $T^-_p$,  $D(T^-_p)$ is given by taking a product of determinants, where there is one determinant for each column of $T^-_p$. Each determinant is the determinant of matrix with rows taken from the path matrices showing up in the first index of the labels. The row contributed by a particular path matrix is determined by the second index in that label. More precisely, if the labels of the $l^{th}$ column of $T^-_p$ are
 \[i^l_m j^l_m, m=1,\dots,r_p,\]
 set
 \[D(T^-_p)=\prod_{l=1}^{r_p} \det([R_{j^l_m}(M_{P_{i^l_m}})]_{m=1,\dots,r_p}).\] 
 Set $D(\underline{T}^-)=\prod_{p \in Q_0} D(T^-_p).$
 
Similarly, given  a tableau $T^+_p$,  $D(T^+_p)$ is given by taking a product of determinants, where there is one determinant for each column of $T^+_p$. Each determinant is the determinant of a matrix with one column taken from each of the path matrices showing up in the first index of the labels in this tableau column. The column contributed by a particular path matrix is determined by the second index in that label. More precisely, if the labels of the $l^{th}$ column of $T^+_p$ are
 \[i^l_m j^l_m,\textup{ for } m=1,\dots,r_p,\]
 set
 \[D(T^+_p)=\prod_{l=1}^{r_p} \det([C_{j^l_m}(M_{P_{i^l_m}})]_{m=1,\dots,r_p}).\]
 As above, set $D(\underline{T}^+)=\prod_{p \in Q_0} D(T^+_p).$

\begin{lem}\label{lem:twoexp}  If $\underline{T}^\pm$ is a linked pair,
\[f_{\underline{T}^\pm} =  \sum_{\alpha^+ \in \Sym^+} \sign(\alpha^+) D(\alpha^+ \underline{T}^-)= \sum_{\alpha^- \in \Sym^-} \sign(\alpha^-) D(\alpha^- \underline{T}^+).\]
\end{lem}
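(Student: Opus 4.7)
The plan is to prove both equalities by Leibniz-expanding each column determinant appearing in $D(\underline{T}^\pm)$ and then using the linked-pair invariance of $\Mon$ (the preceding lemma) to switch between $\underline{T}^+$ and $\underline{T}^-$.

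The key preliminary step is the \emph{column-expansion identity}
\[ D(\underline{T}^-) \;=\; \sum_{\alpha^- \in G^-} \sign(\alpha^-)\,\Mon(\alpha^- \cdot \underline{T}^-), \qquad D(\underline{T}^+) \;=\; \sum_{\alpha^+ \in G^+} \sign(\alpha^+)\,\Mon(\alpha^+ \cdot \underline{T}^+). \]
To verify the first, I would fix a vertex $p$ and a column $l$ of $T^-_p$ with labels $i^l_m j^l_m$ placed at row $m$. The Leibniz formula expands $\det\bigl([R_{j^l_m}(M_{P_{i^l_m}})]_{m=1,\dots,r_p}\bigr)$ as a sum over $\sigma \in \Sym_{r_p}$ of $\sign(\sigma) \prod_m (M_{P_{i^l_m}})_{j^l_m,\sigma(m)}$. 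On the other hand, letting the $l$-th copy of $\Sym_{r_p}$ in $G^-$ act on $T^-_p$ by permuting the labels in column $l$ via $\sigma$ shifts the label $i^l_m j^l_m$ into row $\sigma(m)$; after reindexing, the contribution of column $l$ to $\Mon(\sigma \cdot \underline{T}^-)$ is exactly $\prod_m (M_{P_{i^l_m}})_{j^l_m,\sigma(m)}$. Taking the product over columns and vertices identifies the two sums. The identity for $D(\underline{T}^+)$ is entirely analogous, with the determinant now expanded along columns of the matrices (matching the column-of-$T^+_p$ row-permutation action of $G^+$). This identity is not restricted to the specific tableaux $\underline{T}^\pm$, but holds for any tableau tuple of the appropriate shape and labeling convention.

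Granting the column-expansion identity, I would then exploit the two properties already recorded in the excerpt: $(\alpha^- \underline{T}^+, \alpha^- \underline{T}^-)$ and $(\alpha^+ \underline{T}^+, \alpha^+ \underline{T}^-)$ are linked pairs for any $\alpha^\pm$, and $\Mon$ takes equal values on the two halves of a linked pair. For fixed $\alpha^+ \in G^+$, this gives
\[ \sum_{\alpha^- \in G^-} \sign(\alpha^-)\,\Mon(\alpha^- \cdot \alpha^+ \cdot \underline{T}^+) \;=\; \sum_{\alpha^- \in G^-} \sign(\alpha^-)\,\Mon(\alpha^- \cdot \alpha^+ \cdot \underline{T}^-) \;=\; D(\alpha^+ \cdot \underline{T}^-), \]
where the first equality applies the linked-pair lemma to $(\alpha^- \alpha^+ \underline{T}^+, \alpha^- \alpha^+ \underline{T}^-)$ and the second applies the column-expansion identity to the tuple $\alpha^+ \underline{T}^-$. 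Summing the resulting equality against $\sign(\alpha^+)$ over $\alpha^+ \in G^+$ yields $f_{\underline{T}^\pm} = \sum_{\alpha^+ \in G^+} \sign(\alpha^+)\, D(\alpha^+ \underline{T}^-)$. The symmetric argument, starting from the equivalent form of $f_{\underline{T}^\pm}$ given in the remark preceding the lemma and fixing $\alpha^-$ instead, gives $f_{\underline{T}^\pm} = \sum_{\alpha^- \in G^-} \sign(\alpha^-)\, D(\alpha^- \underline{T}^+)$.

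The only delicate point is the bookkeeping in the column-expansion identity: one must match the Leibniz signs with the sign character of the row-permutation action of $G^\pm$ on the appropriate column, and carefully check that the reindexing from ``which label sits in row $\sigma(m)$'' to ``which entry of $M_{P_{i^l_m}}$ is picked'' introduces no stray signs. Once this column-by-column calculation is assembled into a product over all columns and vertices, the rest of the proof is essentially linked-pair bookkeeping.
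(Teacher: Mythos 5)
Your argument is correct and is precisely the content of the paper's own (one-line) proof, which simply cites the Leibniz formula: the column-expansion identity you establish is the Leibniz expansion of each column determinant matched against the row-permutation action of $G^\pm$, and the rest is the linked-pair bookkeeping you describe. No gaps; your write-up just makes explicit what the paper leaves to the reader.
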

\begin{proof} Follows from the Leibniz formula for the determinant and the fact that the $\Sym^+$ and $\Sym^-$ actions commute. 
\end{proof}

%Then we have, analogously,
% \begin{lem}  If $\underline{T}^\pm$ is a linked pair,
%\[f_{\underline{T}^\pm} =  \sum_{\alpha^- \in \Sym^-} \sign(\alpha^-) D(\alpha^- \underline{T}^+).\]
%\end{lem}
\begin{proof}[Proof of Theorem \ref{thm:isinv}]
Write $\GL(\br)=H^+ \times H^-$, where 
\[H^+=\prod_{p: w_p^+>0} \GL(r_p),\]
and 
\[H^-=\prod_{p: w_p^->0} \GL(r_p).\]
Then the two expressions of $f_{\underline{T}^\pm}$ from Lemma \ref{lem:twoexp} are, respectively, $H^+ \times \id$ and $\id \times H^-$ semi-invariant of weights $w^+$ and $-w^-.$ It follows that $f_{\underline{T}^\pm}$ is semi-invariant for the entire group $\GL(\br)$ with weight $w^+-w^-.$
\end{proof}

\begin{eg}
\label{eg:semiinv1}
Continuing from Example \ref{eg:semiinv}, if $M_i$ denotes the matrix corresponding to the $i^{th}$ path, then we can write $f_{\underline{T}^\pm}$ as 
\[\sum_{\sigma=(\sigma_i) \in S_2^3 }\sign(\sigma) 
\det[C_{\sigma_2(1)}(M_3) |C_{\sigma_3(1)}(M_4)| C_{\sigma_1(2)}(M_7)] 
\det[C_{\sigma_3(2)}(M_5) |C_{\sigma_1(1)}(M_7)| C_{\sigma_2(2)}(M_6)],\]
or equivalently
\[ f_{\underline{T}^\pm}=\sum_{(\tau_1,\tau_2)\in S_3^2} \sign(\tau_1) \sign(\tau_2)
\det{\begin{bmatrix} R_{\tau_2(2)}(M_7) \\ R_{\tau_1(3)}(M_7) \end{bmatrix}} 
\det\begin{bmatrix} R_{\tau_1(1)}(M_3) \\ R_{\tau_2(3)}(M_6) \end{bmatrix}
\det\begin{bmatrix} R_{\tau_1(2)}(M_4) \\ R_{\tau_2(1)}(M_5) \end{bmatrix}.\]
\end{eg}
\subsection{Comparison with Domokos--Zubkov semi-invariants} \label{sec:dzcomparison}
Domokos--Zubkov semi-invariants, as originally defined in \cite{domokos}, are only defined for bipartite quivers. In the case where $Q$ is bipartite, using Proposition 5.4 of \cite{domokos}, it is straightforward to check that the semi-invariants $f_{\underline{T}^\pm}$ are exactly the semi-invariants defined there (up to multiplication by a scalar). Note that because we work over characteristic 0, the part of data defined by $\bar{\lambda}$ in the definition of DZ semi-invariants only changes the semi-invariants by multiplication by a scalar.

The semi-invariants $f_{\underline{T}^\pm}$ are the natural generalization of DZ semi-invariants to the general quiver case. We simply allow the entries of the matrices associated to paths, rather than just the entries of the matrices associated to arrows. However, the proof from \cite{domokos} that these span $\SI(Q,\br)$ only applies in the bipartite case. In \cite{domokos}, the authors extend their results to all quivers by considering a particular doubling of any quiver to a bipartite quiver. However, the functor from this quiver to the original is not suited to our purposes.  

To show that the $f_{T^\pm}$ span, we instead use the work of Schofield--van den Bergh \cite{schofieldbergh} which consider a slightly different generating set than \cite{domokos}.  In particular, they show that for any quiver $Q$ and dimension vector $\br$, the algebra of semi-invariants can be spanned by the semi-invariants of bipartite quivers (when $Q$ is acyclic, which we have assumed for all quivers in this paper). More precisely, one considers all bipartite quivers $Q'$ with functors $F: \add(Q') \to \add(Q)$ taking arrows in $Q'$ to paths in $Q$; then $\SI(Q,\br)$ is spanned by $F(\SI(Q',\br'))$ ranging over all such $(Q',\br)$.  This is a consequence of the proof of \cite[Theorem 2.3]{schofieldbergh}; see also the discussion above  \cite[Corollary 2.4]{schofieldbergh}.\footnote{In fact, the authors of \cite{schofieldbergh} show something stronger: that it is sufficient to only consider a subset of bipartite quivers and some of their semi-invariants. These give the `standard' semi-invariants defined there.}  Such functors act on the semi-invariants by replacing arrow matrices with the matrices associated to their image paths. Therefore, DZ semi-invariants of $Q'$ (which span its semi-invariant algebra, by \cite{domokos}) are mapped to $f_{T^\pm}$ semi-invariants of $Q$. We therefore obtain:
\begin{cor} The $f_{T^\pm}$ span the semi-invariant ring. 
\end{cor}

 \section{Straightening laws}\label{sec:straightening}
 A tableau is called semi-standard if it is weakly increasing along rows and strictly increasing along columns. 
 
 A foundational fact for Grassmannians and flag varieties is that the semi-invariants indexed by semi-standard tableaux span all semi-invariants. Every leading term of \emph{any} semi-invariant is the leading term of a semi-invariant coming from a semi-standard tableau. Moreover, the leading term of a semi-standard tableau is simply the monomial read off of that tableau. These results are proven by showing that the semi-invariants of these varieties satisfy \emph{straightening laws}: every monomial in the Pl\"ucker coordinates that is not semi-standard can be written as sum of monomials that are semi-standard. For example, the semi-invariant corresponding to the non-semi-standard tableau
 \[ \ytableausetup{centertableaux}
\begin{ytableau}
1 & 2\\
4 & 3 \\
\end{ytableau}
\]
can be written as a linear combination of the semi-invariants of the semi-standard tableaux 
\[\begin{ytableau}
1 & 3\\
2 & 4 \\
\end{ytableau} \hspace{3mm} \begin{ytableau}
1 & 2\\
3 & 4 \\
\end{ytableau}
\]
We now investigate the extent of which this holds for linked tableaux pairs. We will see each such relation gives rise to a relation for linked tableaux pairs; however, the resulting condition on leading terms is weaker.
\subsection{Straightening laws for Pl\"ucker coordinates}
Suppose $A$ is an $r \times N$ matrix. For $I=\{i_1,\dots,i_r\}, i_j \in  \{1,\dots,N\}$, let $p_I$ denote the $r \times r$ minor corresponding to the columns indexed by $I$. The $p_I$ are Pl\"ucker coordinates, and it is well-known that they satisfying straightening laws. That is, if a set of $k$ Pl\"ucker coordinates $I_1,\dots,I_k$ does not correspond to a semi-standard tableau, then
\begin{equation} \label{eq:plstr} p_{I_1} \cdots p_{I_k}=\sum a_{J_1, \dots, J_k} p_{J_1} \cdots p_{J_k},\end{equation}
where each set $J_1,\dots,J_k$ satisfies:
\begin{itemize}
\item The associated $r \times k$ tableau is semi-standard.
\item The $J_1,\dots,J_k$ are a re-partitioning of the elements in the union of $I_1,\dots,I_k.$
\item Under a term ordering on the Pl\"ucker coordinates, the monomial on the left hand side has lower term order than all monomials on the right hand side. 
\end{itemize}
Viewing the data consisting of tuples of subsets as tableaux, we can write relations like \eqref{eq:plstr} as 
\[p_T=\sum_{\alpha \in \Sym(r k)} a_\alpha p_{\alpha \cdot T},\]
where $\alpha$ ranges over permutations of the labels in $T$. Note, we allow permutations that mix the columns and rows, and only some of the coefficients are non-zero. 

These relations are true for any matrix. Fix a vertex $p$ in the quiver, and consider the matrix $A^+_p$ formed by concatenating from left to right the matrices $M_P$ for all paths $P$ such that $t(P)=p.$ We can analogously form $A^-_p$, by concatenating the matrices from top to bottom. 

Now consider a linked pair $(\underline{T}^+,\underline{T}^-)$. Fix a vertex $p$ such that $T^+_p$ is non-empty. Recall that
 \[D(T^+_p)=\prod_{l=1}^{r_p} \det([C_{j^l_m}(M_{P_{i^l_m}})]_{m=1,\dots,r_p}),\] where the labels of the $l^{th}$ column of $T^+_p$ are
 \[i^l_m j^l_m, m=1,\dots,r_p.\]
Thus, $D(T^+_p)$ is a product of minors of $A^+_p.$ Any straightening law of the form above can then be applied to $D(T^+_p)$. Of course, when we act by a permutation $\alpha$, blocks can change rows; this means that $\underline{T}^-$ and  $\alpha \cdot \underline{T}^+$ are no longer linked.  We write $\alpha \cdot \underline{T}^-$ for the induced action on the tableaux in $\underline{T}^-$ given by changing the second entry of the labels. That is, we have a linked pair,
\[\alpha \cdot (\underline{T}^\pm):=(\alpha \cdot \underline{T}^+, \alpha \cdot \underline{T}^-).\]
\begin{lem} Suppose there exists a straightening relation giving 
\[D(T^+_p)=\sum_{\alpha} a_\alpha D(\alpha \cdot T^+_p).\]
Then for any $\beta \in \Sym^-,$ we have a relation
\[D((\beta \cdot T^+)_p)=\sum_{\alpha} a_\alpha D((\beta \cdot \alpha \cdot T^+)_p).\]
\end{lem}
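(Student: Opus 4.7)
The plan is to view the hypothesized straightening relation as an instance of Plücker relations on the matrix $A^+_p$, and to use the universality of these relations together with commutativity of the $\alpha$- and $\beta$-actions on $T^+_p$.

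First, I would reformulate $D(T^+_p)$ as a product of $r_p \times r_p$ minors of the matrix $A^+_p$ obtained by concatenating the columns of all path matrices $M_P$ with $t(P)=p$; under this identification a label $ij$ in $T^+_p$ picks out the column $C_j(M_{P_i})$ of $A^+_p$. The hypothesized relation $D(T^+_p) = \sum_\alpha a_\alpha D(\alpha \cdot T^+_p)$ is then a polynomial identity among products of minors of $A^+_p$, derivable from (iterated) Plücker relations, with coefficients $a_\alpha$ determined purely by the combinatorial pattern by which labels are rearranged across positions in the tableau.

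Second, I would note that the action of $\beta \in G^-$ on $T^+_p$ modifies only the second entries of labels, which amounts to choosing \emph{different} columns of the path matrices $M_{P_i}$, hence different columns of $A^+_p$. Since Plücker relations are universal polynomial identities in matrix entries that hold for any choice of columns, the same combinatorial identity applies with unchanged coefficients:
\[ D((\beta \cdot T^+)_p) = \sum_\alpha a_\alpha D(\alpha \cdot (\beta \cdot T^+)_p). \]

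Finally, I would verify $\alpha \cdot (\beta \cdot T^+)_p = (\beta \cdot \alpha \cdot T^+)_p$. The permutation $\alpha$ acts by rearranging labels across positions in $T^+_p$, while $\beta$'s action on a given label depends only on its membership in a structural group determined by the link $\sigma$ (that is, on the label itself, not on its position in $T^+_p$). Hence the actions commute, yielding the desired relation. The most delicate point is justifying that the coefficients $a_\alpha$ transport unchanged under $\beta$: this reduces to the fact that straightening proceeds by column-vector manipulations depending only on the combinatorial shape of the permutations involved, not on the particular labels attached to those columns.
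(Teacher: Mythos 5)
Your proposal is correct and follows essentially the same route as the paper: apply the universal Pl\"ucker identity (with unchanged coefficients) to the $\beta$-modified selection of columns of $A^+_p$, then reduce to the commutation $\alpha \cdot (\beta \cdot T^+)_p=(\beta \cdot \alpha \cdot T^+)_p$, justified by the observation that $\alpha$ moves the locations of labels while $\beta$ alters second entries according to the link, independently of location. If anything, you give slightly more justification than the paper for why the coefficients $a_\alpha$ persist under the $\beta$-substitution of columns.
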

\begin{proof} Using the straightening law, there is a relation:
\[D((\beta \cdot T^+)_p)=\sum_{\alpha} a_\alpha D(\alpha \cdot (\beta \cdot T^+)_p).\]
It suffices to show that 
\[\alpha \cdot (\beta \cdot T^+)_p=(\beta \cdot \alpha \cdot T^+)_p,\]
for all $\alpha$.

We first observe that the right hand side is well-defined. Note that the group $\Sym^-$ for the pair $\alpha \cdot (\underline{T}^\pm)$ is canonically identified with the same group for $(\underline{T}^\pm)$, as $\alpha$ does not change the shape of the tableau. Recall that each copy of $\Sym(r_q)$ in $\Sym^-$ corresponds to a column in $T^-_q$. If the column has labels $i_1 m_1,\dots,i_{r_q} m_q$, these correspond under the link to some set of labels in $\underline{T}^+$, $i_1 1,\dots, i_{r_q} q$, and $\Sym(r_q)$ acts by permuting the second index of these labels. Since $\alpha$ changes only the second index of the labels in $\underline{T}^-$, and does not move the columns, an element of $\beta$ acts in the same way on the labels $i_1 m_1,\dots,i_{r_q} m_q$ appearing in $\underline{T}^+$ as it does on the labels $i_1 m_1,\dots,i_{r_q} m_q$ appearing in $\alpha \cdot \underline{T}^+$, although they may not be in the same location. Finally, these actions commute, for the same reason that the actions of $\Sym^+$ and $\Sym^-$ commute. 
\end{proof}
\begin{thm}[Straightening Laws for semi-invariants of quivers]
Suppose there is a straightening relation giving 
\[D(T^+_p)=\sum_{\alpha} a_\alpha D(\alpha \cdot T^+_p)\] for some target $p$. 
Then
\[f_{\underline{T}^\pm}=\sum_{\alpha} a_\alpha f_{\alpha \cdot \underline{T}^\pm}.\]
In the same way, straightening laws applied to source tableaux give rise to straightening laws for semi-invariants. 
\end{thm}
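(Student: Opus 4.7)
The plan is to use the alternative formula for $f_{\underline{T}^\pm}$ established immediately before the theorem, namely
\[f_{\underline{T}^\pm} = \sum_{\beta \in G^-} \sign(\beta)\, D(\beta \cdot \underline{T}^+).\]
Since $D$ factors as a product of contributions over the vertices, $D(\beta \cdot \underline{T}^+) = \prod_{q \in Q_0} D((\beta \cdot \underline{T}^+)_q)$, we can isolate the factor at the distinguished vertex $p$ and apply the straightening relation there. Conceptually, this reduces the global statement to a purely local manipulation at the vertex $p$, which is exactly the content of the preceding lemma.

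Carrying this out, I would first write
\[f_{\underline{T}^\pm} = \sum_{\beta \in G^-} \sign(\beta)\, D((\beta \cdot \underline{T}^+)_p) \cdot \prod_{q \neq p} D((\beta \cdot \underline{T}^+)_q).\]
For each fixed $\beta \in G^-$, the preceding lemma applied to the straightening hypothesis yields
\[D((\beta \cdot \underline{T}^+)_p) = \sum_\alpha a_\alpha\, D((\beta \cdot \alpha \cdot \underline{T}^+)_p).\]
Substituting and interchanging the order of summation rewrites $f_{\underline{T}^\pm}$ as $\sum_\alpha a_\alpha \sum_\beta \sign(\beta)\, D((\beta \cdot \alpha \cdot \underline{T}^+)_p) \prod_{q\neq p} D((\beta \cdot \underline{T}^+)_q)$.

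To finish, I need to check that the factors at $q \neq p$ are unchanged if we also apply $\alpha$: since $\alpha$ only rearranges labels within $T^+_p$ (and adjusts second entries of the linked labels in $\underline{T}^-$), it acts trivially on $T^+_q$ for $q \neq p$, and hence $(\beta \cdot \alpha \cdot \underline{T}^+)_q = (\beta \cdot \underline{T}^+)_q$ for those vertices. Consequently the inner sum reassembles into $\sum_\beta \sign(\beta)\, D(\beta \cdot \alpha \cdot \underline{T}^+)$, which equals $f_{\alpha \cdot \underline{T}^\pm}$ under the canonical identification of $G^-$ for $\alpha \cdot \underline{T}^\pm$ with $G^-$ for $\underline{T}^\pm$ noted in the lemma's proof. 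This delivers the claimed identity $f_{\underline{T}^\pm} = \sum_\alpha a_\alpha f_{\alpha \cdot \underline{T}^\pm}$.

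For the final sentence of the theorem concerning source tableaux, I would run the symmetric argument using the other formula $f_{\underline{T}^\pm} = \sum_{\alpha^+ \in G^+} \sign(\alpha^+)\, D(\alpha^+ \cdot \underline{T}^-)$, factoring over vertices and applying a mirrored version of the preceding lemma (with $G^+$ and $\underline{T}^-$ playing the roles of $G^-$ and $\underline{T}^+$). The main conceptual subtlety---that the straightening permutation and the $G^\pm$ action commute---has already been dispatched by the preceding lemma, so no further obstacle remains; the argument is essentially a bookkeeping exercise built on top of that commutation.
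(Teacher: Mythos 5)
Your proposal is correct and follows essentially the same route as the paper: both expand $f_{\underline{T}^\pm}$ as $\sum_{\beta\in G^-}\sign(\beta)\,D(\beta\cdot\underline{T}^+)$, apply the preceding lemma at the vertex $p$ for each fixed $\beta$, use the commutation of $\alpha$ with the $G^-$-action to see that the factors at $q\neq p$ are unaffected, and then reassemble after interchanging the sums. The symmetric treatment of the source case also matches the paper.
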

\begin{proof}
From the lemma, we see that for any $\beta \in \Sym^-,$ there is a relation
\[D((\beta \cdot T^+)_p)=\sum_{\alpha} a_\alpha D((\beta \cdot \alpha \cdot T^+)_p).\]
Note that for $q \neq p$,
\[(\alpha \cdot T^+)_q =T^+_q,\]
as by definition, $\alpha$ acts only on $T^+_p$. 
Similarly, for any $\beta \in \Sym^-$,
\[(\beta \cdot \alpha \cdot T^+)_q =(\alpha \cdot \beta \cdot T^+)_q= (\beta\cdot T^+)_q,\]
where the first equality is given by the commutivity property established in the previous lemma. 

Therefore, for any $\beta \in \Sym^-,$ the relation
\[D(\beta \cdot T^+)=\sum_{\alpha} a_\alpha D(\beta \cdot \alpha \cdot T^+)\]
follows by multiplying 
 \[D((\beta \cdot T^+)_p)=\sum_{\alpha} a_\alpha D((\beta \cdot \alpha \cdot T^+)_p).\]
by
\[\prod_{q \neq p} D(\beta(T^+)_q).\]
By summing over $\beta \in \Sym^-$, we obtain the statement of the theorem.
\end{proof}
\begin{eg} \label{eg:semiinv2} We illustrate the theorem in an example, continued from Example \ref{eg:semiinv}. Consider the semi-invariant coming from the linked pair $\underline{T}^+=(\emptyset,\emptyset,T^+_2)$ and $\underline{T}^-=(T^-_0,T^-_1,\emptyset)$, where
  \[T^+_2= \ytableausetup{centertableaux}
\begin{ytableau}
3 1 & 5 2\\
4 1 & 7 1\\
7 2 &  6 2\\
\end{ytableau}
.\]
Let $\underline{T}^-=(T^-_0,T^-_1,\emptyset)$, where 
  \[T^-_1 = \ytableausetup{centertableaux}
\begin{ytableau}
7 2 \\
7 3 \\
\end{ytableau}
 \hspace{10mm} 
T^-_0= \ytableausetup{centertableaux}
\begin{ytableau}
3 1 & 4 2\\
6 3 & 5 1\\
\end{ytableau}
\]

There is a Pl\"ucker relation that gives
\[ \begin{ytableau}
3 1 & 4 2\\
6 3 & 5 1\\
\end{ytableau}=- \begin{ytableau}
3 1 & 5 1\\
4 2 & 6 3\\
\end{ytableau}
+\begin{ytableau}
3 1 & 4 2\\
5 1 & 6 3\\
\end{ytableau}
\]
where we have omitted the $D(-)$ notation. 
Each of the tableaux on the right hand side give rise to a new $\underline{T}^-$, which corresponds to  some $\underline{T}^+$ with $T^+_2$ respectively 
\[\ytableausetup{centertableaux} 
\begin{ytableau}
3 1 & 5 1\\
4 2 & 7 1\\
7 2 &  6 2\\
\end{ytableau} \hspace{5mm} \textup{ and } \hspace{5mm} 
\begin{ytableau}
3 1 & 5 2\\
4 1 & 7 1\\
7 2 &  6 2\\
\end{ytableau}
\]
Call the two new linked pairs $\underline{U}^\pm$ and $\underline{V}^\pm$ respectively. 

For an element $(\tau_1,\tau_2) \in S_3^2$, there is a relation
\[\ytableausetup{boxsize=3.5em}
\begin{ytableau}
3 \tau_1(1) & 4 \tau_1(2)\\
6 \tau_2(3) & 5 \tau_2(1)\\
\end{ytableau} = -\begin{ytableau}
3 \tau_1(1) & 5 \tau_2(1)\\\
4 \tau_1(2) & 6 \tau_2(3)\\
\end{ytableau}
+\begin{ytableau}
3 \tau_1(1) & 4 \tau_1(2)\\
5 \tau_2(1)\ & 6 \tau_2(3)\\
\end{ytableau}
\]
Summing over all of the elements of $S_3^2$, we obtain that 
\[f_{\underline{T}^\pm}=- f_{\underline{U}^\pm} +f_{\underline{V}^\pm}.\]
\end{eg}
Fix a quiver $Q$ and choose an order on the path set of $Q$. We define an order on the labels of the tableaux:
\[ij \leq kl \text{ if } i \leq k \text{ or } i= k \text{ and } j \leq l.\]

\begin{mydef} A pair of linked tableaux is \emph{semi-standard} if each tableau in $\underline{T}^\pm$ is semi-standard; i.e. it is strictly increasing along the columns and weakly increasing along the rows. 

A pair of linked tableaux is \emph{weakly} semi-standard if, for each tableau in $\underline{T}^\pm$, the first digits of the labels are weakly increasing along columns and weakly increasing along the rows.

We say a semi-invariant $f_{\underline{T}^\pm}$ is \emph{(weakly) semi-standard} if $\underline{T}^\pm$ is a (weakly) semi-standard pair of linked tableaux. 
\end{mydef}
\begin{eg} Both tableaux below are weakly semi-standard, while only the second tableau is semi-standard. 
\[
\ytableausetup{boxsize=normal}
\begin{ytableau}
1 2 & 1 1\\
1 3 & 4 2\\
\end{ytableau} \hspace{5mm} \begin{ytableau}
1 1 & 1 2\\
1 3 & 4 2\\
\end{ytableau}\]
\end{eg}
\begin{thm} Let $(Q,\br)$ be an acyclic quiver and dimension vector. Then weakly semi-standard quiver semi-invariants span $\SI(Q,\br)$.
\end{thm}
\begin{proof} Suppose we have a quiver semi-invariant corresponding to a pair of linked tableaux $\underline{T}^\pm$ that is not semi-standard. We can use the straightening laws on the target vertices to write it as a linear combination of linked tableaux whose target tableaux are all semi-standard. We can therefore assume that the target tableaux are all semi-standard. If one of the source tableau is not semi-standard, we can again use the straightening laws to re-write it as a sum of semi-standard tableaux -- however, in doing so we may have changed the labels of $\underline{T}^+$, so that we can no longer assert that they are semi-standard. Nevertheless, since the changes are all in the second digit of the labels, the tableaux remain weakly semi-standard. \end{proof}
\begin{rem} The proof of the theorem in fact shows something slightly stronger: to obtain a generating set for $\SI(Q,\br)$, we only need to consider pairs of linked tableaux $\underline{T}^\pm$ where $\underline{T}^+$ is semi-standard and $\underline{T^-}$ is weakly semi-standard. 
\end{rem}
\begin{eg} Continuing Example \ref{eg:semiinv2}, we can now write $f_{\underline{U}^\pm}$ and $f_{\underline{V}^\pm}$ as a sum of weakly semi-standard tableaux. For example, for $U^\pm$, note that by noting that
\[U_2^+=\ytableausetup{centertableaux,boxsize=1.5em} 
\begin{ytableau}
3 1 & 5 1\\
4 2 & 7 1\\
7 2 &  6 2\\
\end{ytableau}=-\ytableausetup{centertableaux} 
\begin{ytableau}
3 1 & 5 1\\
4 2 & 6 2\\
7 2 &  7 1\\
\end{ytableau}\]
If we call the tableau on the right hand side $W_2^+$, then $\underline{W}^+=(\emptyset, \emptyset, W_2^+)$ forms a weakly semi-standard linked pair together with
\[\underline{W}^-=\left( \hspace{1mm} \begin{ytableau}
3 1 & 5 1\\
4 2 & 6 2\\
\end{ytableau}, \begin{ytableau}
7 3 \\
7 3 \\
\end{ytableau}, \emptyset\right)\]
and $f_{\underline{W}^\pm}=-f_{\underline{U}^\pm}.$
\end{eg}
In the case of the Kronecker quiver, we now use these results to study the leading terms of semi-invariants. 
\section{The Kronecker quiver}\label{sec:Kronecker}
In this section, we fix a Kronecker quiver,
 \[\begin{tikzcd}
 0 \arrow[r,"K"] & 1
\end{tikzcd} \]
 and a dimension vector $\br=(r_0,r_1).$ The semi-invariant ring of its abelianization is
 \[\CC[\Mat(r_1 \times r_0)^{\oplus K}].\]
 We label generators $x^i_{jk}$, where $x^i_{jk}$ represents the $(j,k)$ entry of the $i^{th}$ matrix, so that the matrix corresponding to the $i^{th}$ arrow is $M_i=[x^i_{j,k}].$
 
 We can concatenate the $M_i$ two ways:
 \[A_R:=\begin{bmatrix} M_1 |&\cdots &| M_K\\
 \end{bmatrix},\]
  \[A_C:=\begin{bmatrix} M_1 \\ \vdots \\ M_K\\
 \end{bmatrix}.\]
 We fix a term order given by ordering the variables by going along the rows of $M_1$, then $M_2$ and so forth, and then using the lexicographic term order.
  \begin{lem}\label{lem:grlt} The leading term of a full-size minor of $A_R$ or $A_C$ is the monomial corresponding to the diagonal.
 \end{lem}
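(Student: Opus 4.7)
The plan is to focus on $A_R$; the argument for $A_C$ is identical after swapping the roles of rows and columns, as explained at the end. Index the columns of $A_R$ by pairs $(i,k)$ in lexicographic order on $(i,k)$, and let $(i_1, k_1) <_{\mathrm{lex}} \cdots <_{\mathrm{lex}} (i_{r_1}, k_{r_1})$ be the columns selected for a given $r_1\times r_1$ minor. Expanding the determinant gives $\sum_{\sigma\in S_{r_1}}\sgn(\sigma)\,m_\sigma$ with $m_\sigma = \prod_s x^{i_s}_{\sigma(s), k_s}$, so the diagonal monomial is $m_{\mathrm{id}}$. The term order is lex on the variable triples $(i,j,k)$, and every $m_\sigma$ is squarefree; hence comparing two such monomials reduces to comparing their variable supports as sets: $m>m'$ iff the largest variable in their symmetric difference belongs to $m$.

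Fix $\sigma\neq\mathrm{id}$ and set $s^* = \min\{s:\sigma(s)\neq s\}$. Minimality forces $\sigma$ to fix $1,\dots,s^*-1$, so $\sigma$ restricts to a permutation of $\{s^*,\dots,r_1\}$; in particular $\sigma(s^*)>s^*$. I would show two things about the variable $v^* := x^{i_{s^*}}_{s^*, k_{s^*}}$, corresponding to the lex triple $(i_{s^*},s^*,k_{s^*})$. First, $v^*\in m_{\mathrm{id}}$ by definition, and $v^*\notin m_\sigma$ because the columns of the minor have distinct labels, so $x^{i_s}_{\sigma(s),k_s}=v^*$ would force $(i_s,k_s)=(i_{s^*},k_{s^*})$, hence $s=s^*$, contradicting $\sigma(s^*)\neq s^*$. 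Second, every variable $x^a_{b,c}$ whose triple is lex-less than $(i_{s^*},s^*,k_{s^*})$ lies in $m_{\mathrm{id}}$ if and only if it lies in $m_\sigma$.

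For the second claim I would split according to which coordinate of $(a,b,c)$ first drops below $(i_{s^*},s^*,k_{s^*})$. If $a<i_{s^*}$, or if $a=i_{s^*}$ and $b<s^*$, then any index $t$ with $i_t=a$ satisfies $t<s^*$ because the columns are lex-sorted by $(i,k)$; hence $\sigma(t)=t$ and the two membership conditions $(i_t,t,k_t)=(a,b,c)$ and $(i_t,\sigma(t),k_t)=(a,b,c)$ coincide. If instead $a=i_{s^*}$, $b=s^*$, and $c<k_{s^*}$, then membership in $m_{\mathrm{id}}$ would force $t=s^*$ and then $k_{s^*}=c$, which is absurd; membership in $m_\sigma$ would force $\sigma(t)=s^*$, hence $t\geq s^*$, and column sorting together with $i_t=i_{s^*}$ forces $k_t\geq k_{s^*}$, contradicting $k_t=c$. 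Combining these facts, $v^*$ is the largest variable in the symmetric difference of the supports, and $v^*\in m_{\mathrm{id}}$, so $m_{\mathrm{id}}>m_\sigma$, which is what is needed.

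The $A_C$ case is verbatim after interchanging $j$ and $k$: the rows of $A_C$ are labelled by pairs $(i,j)$ in lex order, a permutation now shuffles column indices $\{1,\dots,r_0\}$, and the diagonal term reads $\prod_s x^{i_s}_{j_s,s}$. The main (and only) obstacle is the case analysis above: one must simultaneously track the lex order on variable triples, the lex order on minor labels, and the constraint that $\sigma$ preserves $\{s^*,\dots,r_1\}$, but once these are laid out the cases are short and mechanical.
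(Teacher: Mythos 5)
Your argument is correct and is essentially an expanded version of the paper's one-line proof, which rests on the observation that in any relevant submatrix of $A_R$ or $A_C$ the lex-largest entry sits in the upper-left corner; your explicit comparison of $m_{\mathrm{id}}$ with $m_\sigma$ via the largest variable in the symmetric difference of (squarefree) supports just makes that observation precise. One small slip: in the subcase $a=i_{s^*}$ and $b<s^*$, the assertion that every $t$ with $i_t=a$ satisfies $t<s^*$ is false (e.g.\ $t=s^*$ itself), but the conclusion survives because the row constraint forces the witnessing index to be $t=b<s^*$ for $m_{\mathrm{id}}$ and $t=\sigma^{-1}(b)=b$ for $m_\sigma$, both of which $\sigma$ fixes.
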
 
 \begin{proof} The result follows because both $A_R$ and $A_C$ have the property that the entry of highest weight in a submatrix of any size is the entry in the upper left corner. 
 \end{proof}
 \begin{prop}\label{prop:lt} Let $f$ be a semi-invariant of the Kronecker quiver. Then the leading monomial of $f$, which we denote $\LM(f)$, is 
 \[\Mon(\underline{T}^+)=\Mon(\underline{T}^-)\]
 for some semi-standard pair of linked tableaux $T^\pm.$
 \end{prop}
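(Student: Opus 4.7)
The plan is to reduce the claim to a computation of $\LM(f_{\underline{T}^\pm})$ for a single linked pair, and then apply the spanning result of Section \ref{sec:straightening}. The key intermediate claim I would establish is: for every linked pair $\underline{T}^\pm$, there exists a semi-standard linked pair $\underline{U}^\pm$ with $\LM(f_{\underline{T}^\pm}) = \Mon(\underline{U}^+) = \Mon(\underline{U}^-)$. Granting this, the proposition follows: any $f \in \SI(Q,\br)$ is a linear combination of DZ semi-invariants $f_{\underline{T}^\pm}$, and $\LM(f)$ coincides with $\LM(f_{\underline{T}^\pm})$ of at least one surviving summand, which by the intermediate claim has the desired form.

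First I would work with the expansion
\[f_{\underline{T}^\pm} = \sum_{\alpha^- \in G^-} \sign(\alpha^-)\, D(\alpha^- \cdot \underline{T}^+),\]
which writes $f_{\underline{T}^\pm}$ as a signed sum of products of $r_1 \times r_1$ minors of $A_R$, one minor per column of $T^+$. By the preceding lemma, the leading monomial of each such minor is the product of its diagonal entries once the columns of the minor are re-ordered by increasing lex index $(i,j)$ of the corresponding label. Therefore $\LM(D(\alpha^- \cdot \underline{T}^+))$ equals $\Mon$ of the column-sorted tableau obtained from $\alpha^- \cdot \underline{T}^+$; in particular, when $\underline{T}^+$ is already semi-standard and $\alpha^- = \id$, no sorting is needed and $\LM(D(\underline{T}^+)) = \Mon(\underline{T}^+)$.

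Second, I would identify the dominant term in the signed sum. Among all $\alpha^- \in G^-$, the one maximizing $\LM(D(\alpha^- \cdot \underline{T}^+))$ corresponds to the permutation producing a lex-maximal column-sorted tableau, which is automatically semi-standard (strictly increasing along columns in the lex order on $(i,j)$). One then has to verify that this leading monomial is not cancelled in the signed sum: the strict monotonicity of the lex order on columns of $A_R$ forces the column-sort permutation that produces the diagonal monomial from each non-sorted minor to be uniquely determined, so contributions from different $\alpha^-$ land on genuinely different monomials, and only one signed pair produces the maximal monomial. This matches $\LM(f_{\underline{T}^\pm})$ with $\Mon(\underline{U}^\pm)$ for a semi-standard linked pair $\underline{U}^\pm$, completing the intermediate claim; the dual expansion in $\alpha^+$ gives the equality $\Mon(\underline{U}^+) = \Mon(\underline{U}^-)$.

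The main obstacle will be the non-cancellation step. One must argue carefully that the identification of $G^-$ with a product of symmetric groups acting on linked batches is fine enough to prevent two distinct elements $\alpha^-$ from producing the same leading monomial with cancelling signs, and that the dominant $\alpha^-$ is compatible with the link so that the resulting $\underline{U}^\pm$ is an honest linked pair. The combinatorics is simplified in the Kronecker setting by the presence of just two vertices, so that the link reduces to a compatible pair of row/column permutations; I expect this to suffice to pin down the sign bookkeeping and yield the claimed semi-standard representative.
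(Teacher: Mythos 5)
Your approach is genuinely different from the paper's, and it has gaps that I do not think can be repaired without essentially redoing the argument. The paper does not decompose $f$ into DZ semi-invariants at all. Instead it observes that $f$, being semi-invariant for the $\GL(r_1)$-action alone, lies in the subalgebra generated by the maximal minors of $A_R$, and being semi-invariant for the $\GL(r_0)$-action alone, lies in the subalgebra generated by the maximal minors of $A_C$. The classical SAGBI/straightening theory for Grassmannians, applied to $f$ itself in each of these two Pl\"ucker algebras, says that $\LM(f)$ is $\Mon(T^+)$ for a semi-standard $T^+$ and simultaneously $\Mon(T^-)$ for a semi-standard $T^-$; the link is then read off directly from the variables $x^i_{jk}$ occurring in $\LM(f)$. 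No analysis of any individual $f_{\underline{T}^\pm}$ is needed.

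The concrete problems with your route are the following. First, the reduction ``$\LM(f)$ coincides with $\LM(f_{\underline{T}^\pm})$ of at least one surviving summand'' is false in general: when $f=\sum_i c_i f_{\underline{T}_i^\pm}$, the maximal leading monomials of the summands can cancel, leaving $\LM(f)$ strictly smaller than every $\LM(f_{\underline{T}_i^\pm})$. Ruling this out requires knowing that semi-standard pairs have distinct, non-cancelling leading monomials and that $f$ can be written over \emph{semi-standard} pairs --- but the spanning result available at this point in the paper only produces \emph{weakly} semi-standard pairs, and the full semi-standard basis (Theorem \ref{thm:lm}) is itself deduced from Proposition \ref{prop:lt}, so your argument would be circular. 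Second, within a single $f_{\underline{T}^\pm}$ the claim that distinct $\alpha^-$ produce distinct monomials is false: such collisions with opposite signs are exactly why some $f_{\underline{T}^\pm}$ vanish identically (e.g.\ when a column repeats a label), so the non-cancellation step does not go through for a general linked pair. Third, sorting each column of a minor into increasing lex order makes columns strictly increasing but does not make rows weakly increasing, so the ``lex-maximal column-sorted tableau'' is not automatically semi-standard; achieving row-monotonicity is precisely what the straightening laws are for and cannot be obtained by column sorting alone.
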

 \begin{proof} Since $f$ is a semi-invariant of the Kronecker quiver, it is a semi-invariant with respect to both the $\GL(r_0)$ and $\GL(r_1)$ natural actions. It therefore can be written both as a polynomial in the $r_0 \times r_0$ minors of $A_C$ and as a polynomial in the $r_1 \times r_1$ minors of $A_R$. Using standard results for these rings, we know that the leading term of $f$ can be written as
  \[\Mon({T}^+)=\Mon({T}^-),\]
  for two semi-standard tableaux, $T^+$ and $T^-$. It suffices to show that this pair is linked. We construct a link using the leading term of $f$. Consider $x^i_{jk}$ appearing in the leading term of $f$. Then $x^i_{jk}$ corresponds to a label $i k$ appearing in the $j^{th}$ row of $T^+$, and also to a label $i j$ appearing in the $k^{th}$ row of $T^-$.  
 \end{proof}
 \begin{rem} Note that there are potentially multiple links supporting the same pair of semi-standard tableaux $T^\pm$; these lead to different semi-invariants. The ambiguity is up to multiple labels $i j$ appearing in a fixed row $k$ of $T^+$, or equivalently labels $i k$ appearing in row $j$ of $T^-.$ We say a link is \emph{semi-standard} if it maps the boxes labeled $i j$ in row $k$ of $T^+$ to the boxes labeled $i k$ in $T^-$ in an order preserving way. From now on, we include this in the definition of a semi-standard linked pair: both the tableaux and the link are required to be semi-standard.
 \end{rem}
 \begin{cor} Let $(T^-,T^+)$ be a semi-standard linked tableaux pair. Then there is no other semi-standard linked tableaux pair $(T_1^-,T_1^+)$ such that $T^-=T^-_1$ or $T^+=T^+_1$.
 \end{cor}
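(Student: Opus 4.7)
The plan is to show that the row-contents of $T^-$ are forced by $T^+$ via the link equations, and then to invoke the semi-standard row condition to pin down $T^-$ uniquely (and symmetrically in the other direction). In the Kronecker setting, $\underline{T}^+$ and $\underline{T}^-$ are each a single tableau, so the notation stays light.

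First, I would unpack the link condition. By Definition \ref{def:linked}, a box in row $j$ of $T^+$ with label $il$ corresponds under the link $\sigma$ to a box in row $l$ of $T^-$ with label $ij$. Therefore, for each row index $l$, the multiset of entries in row $l$ of $T^-$ must equal
\[
\{\, ij : \text{some box in row } j \text{ of } T^+ \text{ carries label } il \,\},
\]
counted with multiplicity. This multiset is manifestly determined by $T^+$ alone, before any choice of $T^-$ or link is made.

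Second, since $T^-$ is semi-standard, each row is weakly increasing in the order introduced on labels, so the multiset of entries of a row determines the row. Combining with the previous step, $T^-$ is completely recovered from $T^+$, and hence any other semi-standard linked pair $(T_1^-,T_1^+)$ with $T_1^+ = T^+$ satisfies $T_1^- = T^-$. The link itself is then determined, because by the semi-standard link convention fixed in the preceding Remark, the restriction of $\sigma$ to the boxes of $T^+$ carrying a given label $il$ (all lying in some row $j$) must be the unique order-preserving bijection onto the boxes of $T^-$ carrying label $ij$ (all lying in row $l$). Thus the two pairs coincide as labelled data.

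Third, a symmetric argument reading the link the other way shows that the multiset of row $j$ of $T^+$ equals $\{\, il : \text{some box in row } l \text{ of } T^- \text{ carries label } ij\,\}$, and semi-standardness of $T^+$ then forces $T_1^+ = T^+$ whenever $T_1^- = T^-$. There is no real obstacle here: the only thing to keep an eye on is the uniqueness of the semi-standard link given the two tableaux, which is built into the definition. The substance of the corollary is simply that the semi-standard convention eliminates the ambiguity flagged in the Remark.
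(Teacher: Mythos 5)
Your argument is correct and is exactly the reasoning the paper leaves implicit: the link condition forces the multiset of labels in each row of one tableau to be determined by the other, and semi-standardness of rows then fixes the arrangement, with the semi-standard link convention from the preceding Remark removing the remaining ambiguity in $\sigma$. The paper states this corollary without proof, and your write-up supplies precisely the intended justification.
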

 
The following reformulation of the link between a pair of linked tableaux will be helpful. Let $(T^-,T^+)$ be a linked pair. Each column $T^-$ corresponds, using the link, to some set of $r_0$ boxes in $T^+$ with labels $i_1 1, \dots, i_{r_0} r_0$. We can connect these boxes in $T^+$ using an arrow (or path). After doing this for each column of $T^-$, all boxes of $T^+$ will be contained in exactly one of these paths. The roles of $T^-$ and $T^+$ can be reversed.

To illustrate this, consider the following linked pair:
\[
T^+=    \begin{ytableau}
    1 1 & 2 3\\
    1 2 & 3 2\\
    2 1 & 3 3 \\
    \end{ytableau}\hspace{5mm} \hspace{5mm} T^-=\begin{ytableau}
1 1 & 2 3 \\
1 2 & 3 2 \\
2 1 & 3 3 \\
\end{ytableau} \]
The arrow diagram for $T^+$ is
\[\begin{tikzpicture}[scale=0.6]
\draw (0,0) rectangle (1,1);
\draw (1,1) rectangle (2,2);
\draw (0,1) rectangle (1,2);
\draw (1,0) rectangle (2,1);
\draw (0,2) rectangle (1,3);
\draw (1,2) rectangle (2,3);
\draw[->] (0.5,2.5) --  (0.5,1.5)--(1.5,2.5);
\draw[->] (0.5,0.5) -- (1.5,1.5) -- (1.5,0.5);
\end{tikzpicture}\]
Then the arrow diagram for $T^-$ the same since $T^+$ is self-dual.

 \begin{prop}\label{prop:ltachieved} Let $(T^+,T^-)$ be a linked, semi-standard pair, and let $f_{T^\pm}$ be the associated semi-invariant. Then
 \[\Mon(T^+)=\Mon(T^-)\]
 is the leading monomial of $f_{T^\pm}.$
 \end{prop}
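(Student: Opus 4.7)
My plan is to use the identity $f_{T^\pm}=\sum_{\alpha^-\in G^-}\sign(\alpha^-)\, D(\alpha^-\cdot T^+)$ and track the leading monomial of each summand. Since $T^+$ is semi-standard, the labels in each column of $T^+$ are strictly increasing in label order, so under the identification of a label $(i,j)$ with column $(i-1)r_0+j$ of $A_R$ each minor in $D(T^+)$ has its columns in increasing $A_R$-index order; the preceding lemma then gives $\LM(D(T^+))=\Mon(T^+)$. For $\alpha^-\in G^-\setminus\{e\}$ I claim $\LM(D(\alpha^-\cdot T^+))\le \Mon(T^+)$ in the term order; the structural input is that in a semi-standard linked pair the top-left label of $T^+$ must equal $(i_0,1)$, where $i_0$ is the minimum first entry in $T^+$. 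Indeed, $T^-$ semi-standard forces its top-left to be $(i_0,1)$, the link pairs this with a $T^+$-label $(i_0,1)$ in row $1$, and column-strictness together with row-weakness of $T^+$ pin this box at $(1,1)$. Using this as base case, an induction on positions $(\ell,c)$ of $T^+$ in lex order shows that the sorted-diagonal factor contributed by column $c$ of $\alpha^-\cdot T^+$ at row $\ell$ is a variable whose $(i,\ell,j)$-weight is at least as large as the weight of the variable in $\Mon(T^+)$ at $(\ell,c)$: the action of $\alpha^-$ (permuting second entries within linked groups) combined with the column-sorting required to extract each minor's leading monomial cannot produce a variable of strictly smaller weight than the one $T^+$ already realizes.

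Combining the above yields $\LM(f_{T^\pm})\le \Mon(T^+)$, and by Proposition \ref{prop:lt}, $\LM(f_{T^\pm})=\Mon(S^+)$ for some semi-standard linked pair $(S^+,S^-)$, so $\Mon(S^+)\le \Mon(T^+)$. To finish I must verify that $\Mon(T^+)$ actually appears in $f_{T^\pm}$ with non-zero coefficient. The $\alpha^-=e$ summand contributes $+\Mon(T^+)$, and the remaining summands cannot cancel it: I would show this by analyzing the stabilizer in $G^-\times G^+$ of $\Mon(T^+)$ under the natural action on $T^+$ and verifying that every stabilizing pair $(\alpha^-,\alpha^+)$ satisfies $\sign(\alpha^-\alpha^+)=+1$. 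The link structure of a semi-standard pair rigidly couples columns of $T^-$ and $T^+$, forcing each stabilizing pair to arise from the same underlying permutation acting on paired linked columns, so the signs of $\alpha^-$ and $\alpha^+$ match and their product is always $+1$. Together with the uniqueness corollary preceding the proposition, this gives $\Mon(S^+)=\Mon(T^+)$ and hence $\LM(f_{T^\pm})=\Mon(T^+)$.

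The main obstacle will be the induction establishing $\LM(D(\alpha^-\cdot T^+))\le \Mon(T^+)$: one must carefully verify at each position $(\ell,c)$ that the sorted-diagonal variable produced after permutation and column-sorting cannot have weight strictly smaller than the variable in $\Mon(T^+)$ at the same position. The stabilizer/sign computation in the final step is a close second, but both obstacles rest on the same structural input---the rigidity of the semi-standard link---which I expect to make the combinatorics tractable though delicate.
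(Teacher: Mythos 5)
Your proposal is correct and follows essentially the same route as the paper: expand $f_{T^\pm}$ over the group action, observe that semi-standardness of both tableaux forces every nontrivially-acted term to have monomial no larger than $\Mon(T^+)$, and rule out cancellation of the top term by showing that stabilizing pairs $(\alpha^-,\alpha^+)$ have matching signs. The paper's argument is shorter (it works monomial-by-monomial over $G^+\times G^-$ rather than grouping by $\alpha^-$ and invoking the diagonal leading-term lemma for each $D(\alpha^-\cdot T^+)$), but the content is the same; just be careful that your ``weight at least as large'' comparison is stated in the direction that yields $\LM(D(\alpha^-\cdot T^+))\leq\Mon(T^+)$ in the term order.
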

 \begin{proof} The monomials that appear (with possibly zero coefficient -- there could be cancellations) in $f_{T^\pm}$ are
 \[m_{\sigma^+,\sigma^-}:=\Mon(\sigma^+ \cdot \sigma^- T^+)\]
 for $(\sigma^+,\sigma^-) \in \Sym^+ \times \Sym^-.$
 The element $\sigma^+$ acts by permuting the rows of each column of $T^+$, and the element $\sigma^-$ acts by permuting the second digit in the label of the boxes of $T^+$. 
 
 We observe that for any monomial $m_{\sigma^+,\sigma^-},$ there is another monomial $m_{\tau^+,\tau^-}$ with equal or higher term order such that $\tau^+$ and $\tau^-$ both have the property that they only permute labels or boxes with the same first digit. To see this, observe that $m_{\sigma^+,\sigma^-}$ appears in the product of determinants $D(\sigma^- T^+)$. By Lemma \ref{lem:grlt}, the leading term of this product is the monomial associated to the tableau obtained from $\sigma^- T^+$ by sorting each column, which is $m_{\tau^+,\sigma^-}$ for some $\tau^+$. Since $\sigma^-$ only changes the second digits of labels, and $T^+$ is semi-standard, $\tau^+$ at most changes the order of labels in a column with the same first digit. Thus $m_{\tau^+,\sigma^-}$ has higher term order than $m_{\sigma^+,\sigma^-}$. The monomial $m_{\tau^+,\sigma^-}$ appears in the product of determinants $D(\tau^+ T^-)$, and the same argument shows that the leading term of $D(\tau^+ T^-)$ is some $m_{\tau^+,\tau^-}$ with $\tau^-$ having the required property.
 
 We can therefore restrict our attention to monomials $m_{\tau^+,\tau^-}$ such that $\tau^+$ and $\tau^-$ both have the property that they only permute labels or boxes with the same first digit. Assume that such a monomial satisfies $m_{\tau^+,\tau^-} \geq \Mon(T^+)$. We'll show that $m_{\tau^+,\tau^-} = \Mon(T^+)$, and that $\sign(\tau^+)\sign(\tau^-)=1$. This will prove the proposition. 

Before we proceed, we introduce some language and prove a claim. Consider a label $ij$ in $T^+$ in a box $B_1$. Then $\tau^-$ takes the $j$ and moves it to another box, originally labeled $ik$ for some $k$ in a box $B_2$, and $\tau^+$ takes the new $ij$ and moves it somewhere within the same column as $B_2$, say to a box $B_3$ originally labeled $il$. We say that $\tau^- \tau^+$ moves $ij$ from $B_1$ to $B_3$, via $B_2$. 
That is, the tableau looks like:
\[\begin{tikzpicture}[scale=0.6]
%\draw[] (0,-2) rectangle (1,-1);
\node[] (A) at (0.5,-1.5) {$ij$};
\node[gray] at (0,-2.1) {\tiny $B_1$};
\node[] at (-2.5,-1.5) {row $r_1$};
%\draw (3,0) rectangle (4,1);
\node[] (C) at (3.5, 0.5) {$il$};
\node[gray] at (3,-0.1) {\tiny $B_3$};
\node[] at (7.5,0.5) {row $r_3$};
%\draw (3,-4) rectangle (4,-3);
\node[] (B) at (3.5, -3.5) {$ik$};
\node[gray] at (3,-4.1) {\tiny $B_2$};
\node[] at (7.5,-3.5) {row $r_2$};
\draw (-1,2) rectangle (6,-6);
\draw[->]  (A)-- (B) node[midway,below] {$\tau^+$};
\draw[->]  (B)-- (C) node[midway,right] {$\tau^-$};
\end{tikzpicture}\]
Note that we have drawn $B_3$ to the right of $B_1$, but it could be to the left -- either case is possible.  
\begin{claim} Suppose that $\tau^- \tau^+$ takes $ij$ from box $B_1$ to $B_3$, via $B_2$, where $B_2$ was originally labeled by $ik$ and $B_3$ by $il$, just as in the above picture.   Let $r_i$ be the row of box $B_i$. 

Suppose that the following assumptions hold: 
\begin{enumerate}
\item $r_1 \geq r_3$,
\item $r_2>r_3,$ 
\item $l \geq j.$
\end{enumerate}
Then in fact $l=j$, $r_1=r_3$, and $r_1-r_3=k-j.$
\end{claim}
\begin{claimproof}
Since $T^+$ and $T^-$ are semi-standard, it is easy to see that $r_2-r_1 \geq k-j$ and $r_2-r_3 \leq k-l.$ But then 
\[r_2-r_1 \geq k-j \geq k-l \geq r_2-r_3,\]
so $r_3 \geq r_1$, hence $r_1=r_3$. This implies that the inequalities above are in fact equalities, so $j=l$  and $r_1=r_3.$
\end{claimproof}

That is, the claim states that the tableau pictured above in fact looks like 
\[\begin{tikzpicture}[scale=0.6]
%\draw[] (0,0) rectangle (1,1);
\node[] (A) at (0.5,0.5) {$ij$};
\node[gray] at (0,0) {\tiny $B_1$};
\node[gray] at (3,0) {\tiny $B_3$};
\node[gray] at (3,-4) {\tiny $B_2$};
\node[] at (2,0.5) {$\cdots$};
\node[] at (-3,0.5) {row $r_1=r_3$};
%\draw (3,0) rectangle (4,1);
\node[] (C) at (3.5, 0.5) {$ij$};
\node[]  at (3.5, -0.7) {$i j+1$};
\node[]  at (3.5, -1.7) {$\vdots$};

%\draw (3,-1) rectangle (4,0);
%\draw (3,-4) rectangle (4,-3);
\node[] (B) at (3.5, -3.5) {$ik$};
\node[] at (9,-3.5) {row $r_2=r_1+k-j$};
\draw (-1,2) rectangle (6,-6);
\end{tikzpicture}\]
Again, we have chosen to draw $B_1$ to the left of $B_3$ for convenience, but it could be on the right. We now proceed with the main proof.

To show that  $m_{\tau^+,\tau^-} = \Mon(T^+)$, we will use induction on the labels $ij$. The induction hypothesis at label $ij$ is the following statement: for all $pq<ij$, $\tau^+ \tau^-$ takes all labels $pq$ in $T^+$ to boxes previously occupied by $pq$ in the same row. 

The base case is for the label $11$ (or $i1$ for the smallest $i$ in $T^+$, but by relabeling, we can assume this is $11$). Note that all labels $11$ appear in the first row, and if any label $11$ is moved by $\tau^+ \tau^-$ to a lower row, then   $m_{\tau^+,\tau^-} > \Mon(T^+)$, a contradiction. For any label $11$ moved to a box occupied by label $pq$ in the first row, the assumptions of the claim are automatically satisfied, so $pq=11$ as desired.

Now assume that the induction hypothesis holds for all $pq<ij$. This means that for fixed $pq<ij$, the part of the monomial made up of variables $x^p_{qs}$ for any $s$ is identical in $\Mon(T^+)$ and $m_{\tau^+,\tau^-}$. If $\Mon(T^+)_{ij}$ and $(m_{\tau^+,\tau^-})_{ij}$ are the parts made up of $x^i_{js}$ for all $s$, then as $m_{\tau^+,\tau^-} \leq \Mon(T^+)$, 
\[(m_{\tau^+,\tau^-})_{ij} \leq \Mon(T^+)_{ij}.\]

Now consider where the $ij$ labels move under $\tau^+ \tau^-$. Note that by the induction assumption, if an $ij$ moves to label occupied by $il$, then $l \geq j$. Suppose we have some $ij$ in box $B_1$ moving to a box $B_3$ via $B_2$, where $B_2$ and $B_3$ were originally occupied by $ik$ and $il$ respectively. As above, let $r_i$ be the row of $B_i$. Suppose that $B_3$ is weakly above $B_1$. There are two cases: $B_2$ is strictly below $B_3$, and $B_2$ is strictly above $B_3$. In the first case, by the claim, we get that $j=l$ and that $B_1$ and $B_3$ are in fact in the same row. In the second case, since $T^-$ is semi-standard, we know that $k<j$ and that the path in $T^+$ containing both these labels in fact contains $ik,i(k+1),\dots,ij$. If we look at action of $\tau^-$ on the second digit of the $i$ labels in this path, we obtain there is some set of labels $s_1<\cdots<s_t$ such that $\tau^-$ takes $is_c$ to $is_{c+1}$, and $s_1 \leq k$, $s_{t-1} < j \leq s_t$. Applying the induction assumption to $i s_c$ for $c=1,\dots,{t-1}$, we see that $r_1-r_2=j-k$. Similar reasoning shows that $r_3-r_2=l-k.$ As $r_3-r_2 \leq r_1-r_2$, this implies $l \leq j$. So in fact $l=j$, and $B_1$ and $B_3$ are in the same row. 

We have shown that all $ij$ labels that move weakly up in fact stay in the same row, and replace identical $ij$ labels. Thus if any $ij$ labels move strictly lower,  $(m_{\tau^+,\tau^-})_{ij} > \Mon(T^+)_{ij}$, a contradiction. So the $ij$ labels satisfy the induction hypothesis. We conclude that $m_{\tau^+,\tau^-} = \Mon(T^+)$. The final step is to show that $\sign(\tau^+)\sign(\tau^-)=1$. Notice that we have shown that whenever a label $ij$ moves to the spot occupied by another label $ij$ via a box labeled $il$, $\tau^-$ takes $j$ to $l$, and $\tau^+$ takes $l$ to $j$. So they are in fact inverses of each other, and hence have the same sign.

 \end{proof}
\begin{thm}\label{thm:lm} Semi-invariants corresponding to semi-standard linked tableaux form a vector space basis of the semi-invariant ring.
\end{thm}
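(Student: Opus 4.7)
The plan is to prove both spanning and linear independence using the propositions just established, with the leading term analysis doing the bulk of the work.

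For linear independence, I would argue as follows. Suppose $\sum_i c_i f_{T_i^\pm} = 0$ is a finite linear relation among semi-invariants coming from distinct semi-standard linked pairs $(T_i^+,T_i^-)$, not all $c_i$ zero. By Proposition \ref{prop:ltachieved}, each $f_{T_i^\pm}$ has leading monomial $\Mon(T_i^+)$. By the corollary following Proposition \ref{prop:lt}, distinct semi-standard linked pairs give distinct tableaux $T_i^+$, hence distinct monomials $\Mon(T_i^+)$. Pick the largest such leading monomial among the $i$ with $c_i \neq 0$; its coefficient in the sum is exactly $c_i \neq 0$, contradicting the relation.

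For spanning, I would fix a weight $\alpha$ and work inside the finite-dimensional graded piece $\SI(Q,\br)_\alpha$. Given any nonzero $f \in \SI(Q,\br)_\alpha$, Proposition \ref{prop:lt} produces a semi-standard linked pair $T^\pm$ such that $\LM(f) = \Mon(T^+)$, and Proposition \ref{prop:ltachieved} guarantees this is also the leading monomial of $f_{T^\pm}$ with coefficient $1$. Subtracting the appropriate scalar multiple, $f - c \cdot f_{T^\pm}$ lies in $\SI(Q,\br)_\alpha$ and has strictly smaller leading monomial. Since all monomials appearing in $\SI(Q,\br)_\alpha$ have total degree fixed by $\alpha$ (using that $\chi(\GL(\br)) = \ZZ^{\rho+1}$ records degrees in each variable block), there are only finitely many monomials below the original leading term, so iterating the subtraction terminates and expresses $f$ as a finite linear combination of semi-invariants $f_{T^\pm}$ coming from semi-standard linked pairs.

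The main obstacle is really just being careful that the reduction process terminates. This is automatic since we are inside a fixed weight graded piece, whose monomials have bounded total degree in each variable, making the set of possible leading monomials finite and the term order a well-ordering on the relevant set. Once termination is established, the two propositions quoted above immediately give both halves of the theorem, so no new combinatorics or algebra beyond what has already been built up is needed.
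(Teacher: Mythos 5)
Your proof is correct and follows essentially the same route as the paper: spanning by repeatedly subtracting $f_{T^\pm}$ using Propositions \ref{prop:lt} and \ref{prop:ltachieved} and inducting on the leading term, and linear independence from the uniqueness of the semi-standard linked pair attached to a given leading monomial. Your treatment is slightly more explicit than the paper's on two points the paper leaves implicit, namely why the reduction terminates (finiteness of monomials in a fixed graded piece) and why distinct semi-standard pairs have distinct leading monomials.
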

\begin{proof} 
We first show that these semi-invariants span. Let $f$ be any semi-invariant. By Proposition \ref{prop:lt}, its leading term corresponds to the monomial of some semi-standard linked pair $T^\pm$ with semi-standard link. Then the leading term of $f-f_{T^\pm}$ is of strictly lower order. By induction, we can therefore write $f$ as a sum of semi-standard semi-invariants. Since there is a unique semi-standard linked pair with semi-standard link, these semi-invariants are linearly independent.
\end{proof}

\section{SAGBI bases for the Kronecker quiver}\label{sec:sagbik}
Recall the definition of a SAGBI basis:
\begin{mydef} Let $R$ be a sub-algebra of a polynomial ring $P=\CC[x_1,\dots,x_m]$, together with a total order on the monomials of $P$. Given a polynomial $f$, let $\LM(f)$ be its leading monomial under this term order. A collection of elements $S \subset R$ is a \emph{SAGBI basis}  for $R$ if 
\[ \CC[\LM(f):f \in R]=\CC[\LM(f):f \in S].\]
\end{mydef}
So far, the results for quiver semi-invariants of a Kronecker quiver follow what is known about the semi-invariants of the Grassmannian. However, to move from a vector space basis of the semi-invariant ring to a SAGBI basis, we need to consider factorization. Let $f$ be a semi-invariant. There are two questions one can ask about factorization:
\begin{enumerate}
\item Does $f$ factor into a product of two semi-invariants?
\item Does the leading monomial, $\LM(f)$,  of $f$ factor into the product of the leading term of two semi-invariants? 
\end{enumerate}
For the Grassmannian, the answer to both questions always coincide. If $T$ is a semi-standard tableau,  $D(T)$ is a product of the determinants corresponding to each of its columns. This implies that the semi-invariant ring of the Grassmannian has a SAGBI basis corresponding to the semi-standard tableaux with a single column.

The following example illustrates how the leading term of a semi-invariant can factor, while the semi-invariant itself does not. 
\begin{eg} Consider the Kronecker quiver with with dimension vector $(2,2)$ and $6$ arrows. The following tableaux are a linked pair:
\[\begin{ytableau}
1 1 &  3 1 & 5 2\\
2 1 & 4 2 & 6 2\\
\end{ytableau} \hspace{5mm} \begin{ytableau}
1 1 & 2 2 & 3 1\\
4 2 & 5 1 & 6 2\\
\end{ytableau}\]
Then the leading term of the associated semi-invariant is the product of the leading terms of the semi-invariants of the associated linked pairs:
\[\begin{ytableau}
1 1 & 5 2\\
2 1  & 6 2\\
\end{ytableau} \hspace{5mm} \begin{ytableau}
1 1 & 2 2 \\
 5 1 & 6 2\\
\end{ytableau}\]
\[\begin{ytableau}
3 1 \\
4 2 \\
\end{ytableau} \hspace{5mm} \begin{ytableau}
3 1 \\
4 2 \\
\end{ytableau}\]
However, the semi-invariant of the original pair is a not a product of the lower degree semi-invariants. 

In contrast, the linked pair 
\[\begin{ytableau}
1 1 &  3 2 & 5 2\\
2 1 & 4 1 & 6 2\\
\end{ytableau} \hspace{5mm} \begin{ytableau}
1 1 & 2 2 & 4 2\\
3 1 & 5 1 & 6 2\\
\end{ytableau}\]
has a leading term that cannot be written as a product of leading terms of any semi-invariants. 
\end{eg}
In general, the link 
prevents such a factorization property for quiver semi-invariants. We consider the requirements on the link for such a factorization to exist.

\begin{mydef}\label{defn:prim} Let ${T^\pm}$ be a semi-standard linked tableaux pair.  If there are two semi-standard linked tableaux pairs ${T_1^\pm}, {T_2^\pm}$ such that 
\[\LM(f_{T^\pm})= \LM(f_{T_1^\pm})\LM(f_{T_2^\pm}),\]
then we say that $T_1^\pm$ and $T_2^\pm$ induce a \emph{splitting} of $T^\pm$. If there are no splittings of $T^\pm$, then we say it is \emph{primitive}. 

\end{mydef}
The motivation for this definition is the following immediate corollary of Theorem \ref{thm:lm}.
\begin{cor} The semi-invariants corresponding to primitive, semi-standard linked tableaux are a (possibly infinite) SAGBI basis of the semi-invariant ring of the Kronecker quiver. 
\end{cor}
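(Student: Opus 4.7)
The plan is to combine the three preceding results about the Kronecker quiver (Proposition~\ref{prop:lt}, Proposition~\ref{prop:ltachieved}, and Theorem~\ref{thm:lm}) with a straightforward induction on the total number of boxes in a linked tableaux pair. Concretely, let $f$ be an arbitrary nonzero semi-invariant of the Kronecker quiver. By Proposition~\ref{prop:lt} there is a semi-standard linked pair $T^\pm$ such that $\LM(f) = \Mon(T^+) = \Mon(T^-)$, and by Proposition~\ref{prop:ltachieved} this common monomial equals $\LM(f_{T^\pm})$. So the content of the corollary reduces to the purely combinatorial statement that every semi-standard linked pair $T^\pm$ has $\LM(f_{T^\pm})$ expressible as a product of leading terms $\LM(f_{U^\pm})$ over primitive semi-standard linked pairs $U^\pm$.

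Next I would prove this combinatorial statement by induction on $|T^\pm|$, the total number of boxes in $\underline{T}^+$ (equivalently $\underline{T}^-$). For the base case, any linked pair whose tableaux consist of a single column on each side cannot split nontrivially, since a splitting requires both factors to be nonempty semi-standard linked pairs and hence each to have at least one column; such minimal pairs are automatically primitive. For the inductive step, suppose $T^\pm$ is semi-standard and has more than one column on either side. If $T^\pm$ is primitive, there is nothing to do. Otherwise, by Definition~\ref{defn:prim}, there exist semi-standard linked pairs $T_1^\pm$ and $T_2^\pm$, each nonempty, with
\[
\LM(f_{T^\pm}) = \LM(f_{T_1^\pm})\,\LM(f_{T_2^\pm}).
\]
Because both factors are nonempty, $|T_i^\pm| < |T^\pm|$ for $i=1,2$. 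By induction each $\LM(f_{T_i^\pm})$ is a product of leading terms of primitive semi-standard linked pair semi-invariants, and multiplying these factorizations yields the desired factorization of $\LM(f_{T^\pm})$.

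The main point to verify carefully is the strict decrease in size, which is where one should be precise about what a splitting is: per Definition~\ref{defn:prim} we demand genuine semi-standard linked pairs $T_1^\pm,T_2^\pm$ (not merely a combinatorial decomposition of the underlying monomial), so neither can be empty and each factor has at least one column. This ensures the induction is well-founded and terminates. No further analysis of the link structure, or of potential interactions with the term order, is needed, since all of that work has been packaged into Propositions~\ref{prop:lt} and~\ref{prop:ltachieved} and into the definition of splitting itself. The hardest conceptual step was already done in Theorem~\ref{thm:lm} and the definition of primitivity; what remains is the elementary recursion above.
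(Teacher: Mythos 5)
Your argument is correct and is exactly the (implicit) argument the paper has in mind: the paper states the corollary as "immediate" from Theorem~\ref{thm:lm} and its ingredients, and your induction on the number of boxes, with termination guaranteed because a genuine splitting has two nonempty factors of strictly smaller size, is the right way to make that precise. The only slight inaccuracy is your description of the base case — for general $(r_0,r_1)$ the minimal nonempty pairs need not be single columns — but this is immaterial since your inductive step already disposes of all primitive pairs directly.
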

Giving a combinatorial description of primitive semi-standard tableaux pairs is thus the key step to explicitly writing down SAGBI bases of Kronecker quivers. This seems to be a non-trivial problem. A related question is whether the set of primitive, semi-standard tableaux is finite for a fixed $r_1, r_2,$ and $K$. We know of no examples where it is infinite. In the following section, we show that the set is finite when $r_1=r_2=2$ for any $K$ and give an explicit, combinatorial description. Before proving this, we give some small examples where we have used the computer algebra program MAGMA to determine the set of primitive, semi-standard linked tableaux.

\begin{eg}Consider the Kronecker quiver with $r_0=2$ and $r_1=3$, and $2$ arrows between the vertices. Using MAGMA, we determine that there is exactly one primitive semi-standard tableaux pair of degree $(-3,2)$ and no primitive semi-standard tableaux pairs of degree $(-6,4)$. The primitive tableaux pair is
\[T^+=\begin{ytableau}
1 1 & 1 1\\
1 2 & 2 1\\
2 2 & 2 2 \\
\end{ytableau}\hspace{8mm} 
T^-=\begin{ytableau}
1 1 & 1 1 & 2 2\\
1 2 & 2 3 & 2 3\\
\end{ytableau}\]
This is clearly a SAGBI basis, as it is the only generator of the semi-invariant ring. 
\end{eg}

\begin{eg}\label{eg:233} Consider the Kronecker quiver with $r_0=2$ and $r_1=3$, and $3$ arrows between the vertices. Using MAGMA, we determine that there are 20 primitive semi-standard tableaux pairs of degree $(-3,2)$ and no primitive semi-standard tableaux pairs of degree $[-6,4]$. The $T^+$ of the 20 primitive tableaux pairs are:
\[
\begin{ytableau}
1 1 & 2 2\\
3 1 & 3 1\\
3 2 & 3 2 \\
\end{ytableau}\hspace{3mm}
\begin{ytableau}
1 1 & 2 1\\
1 2 & 3 1\\
3 2 & 3 2 \\
\end{ytableau}\hspace{3mm}
\begin{ytableau}
1 1 & 2 1\\
1 2 & 3 1\\
2 2 & 3 2 \\
\end{ytableau}\hspace{3mm}
\begin{ytableau}
1 1 & 2 1\\
2 2 & 3 1\\
3 2 & 3 2 \\
\end{ytableau}\hspace{3mm}
\begin{ytableau}
1 1 & 2 2\\
2 1 & 3 1\\
3 2 & 3 2 \\
\end{ytableau}\]
\[\begin{ytableau}
1 1 & 2 2\\
2 1 & 3 1\\
2 2 & 3 2 \\
\end{ytableau}\hspace{3mm}
\begin{ytableau}
1 1 & 1 1\\
1 2 & 2 1\\
3 2 & 3 2 \\
\end{ytableau}\hspace{3mm}
\begin{ytableau}
1 1 & 1 1\\
1 2 & 2 1\\
2 2 & 3 2 \\
\end{ytableau}\hspace{3mm}
\begin{ytableau}
1 1 & 1 1\\
1 2 & 2 2\\
2 1 & 3 2 \\
\end{ytableau}\hspace{3mm}
\begin{ytableau}
1 1 & 1 1\\
1 2 & 1 2\\
2 1 & 3 2 \\
\end{ytableau}\]
\[\begin{ytableau}
1 1 & 1 1\\
2 1 & 2 2\\
3 2 & 3 2 \\
\end{ytableau}\hspace{3mm}
\begin{ytableau}
1 1 & 1 1\\
1 2 & 2 1\\
2 2 & 2 2 \\
\end{ytableau}\hspace{3mm}
    \begin{ytableau}
    1 1 & 1 1\\
    2 1 & 2 2\\
    2 2 & 3 2 \\
    \end{ytableau}\hspace{3mm}
\begin{ytableau}
1 1 & 1 1\\
1 2 & 3 1\\
3 2 & 3 2 \\
\end{ytableau}\hspace{3mm}
\begin{ytableau}
1 1 & 1 1\\
1 2 & 3 1\\
2 2 & 3 2 \\
\end{ytableau}\]
\[\begin{ytableau}
1 1 & 1 1\\
2 2 & 3 1\\
3 2 & 3 2 \\
\end{ytableau}\hspace{3mm}
\begin{ytableau}
1 1 & 2 1\\
1 2 & 2 2\\
2 1 & 3 2 \\
\end{ytableau}\hspace{3mm}
\begin{ytableau}
1 1 & 2 1\\
2 1 & 2 2\\
3 2 & 3 2 \\
\end{ytableau}\hspace{3mm}
\begin{ytableau}
1 1 & 2 1\\
2 1 & 2 2\\
2 2 & 3 2 \\
\end{ytableau}\hspace{3mm}
\begin{ytableau}
2 1 & 2 1\\
2 2 & 3 1\\
3 2 & 3 2 \\
\end{ytableau}\]
Using MAGMA, we computed the semi-invariants associated to each of the tableaux pairs, and checked that they form a SAGBI basis. If we increase the number of arrows to $4$, then there are 232 primitive semi-standard tableaux pairs of degree $d(-3,2),$ for $1\leq d \leq 3$, but there may be more in higher degree. 
\end{eg}
\begin{eg}\label{eg:334} Consider the Kronecker quiver with $r_0=3$ and $r_1=3$, and $3$ arrows between the vertices. Using MAGMA, we determine that there are 18 primitive semi-standard tableaux pairs of degree $(-d,d)$ for $1 \leq d \leq 3$ and no primitive semi-standard tableaux pairs of degree $[-4,4]$. The $T^+$ of the 18 primitive tableaux pairs are:
\[
\begin{ytableau}
i 1 \\
j 2 \\
k 3 \\
\end{ytableau}\hspace{3mm} \text{ where }
1 \leq i \leq j \leq k \leq 3,\]
and
\[    \begin{ytableau}
    1 1 & 1 1\\
    1 2 & 2 3\\
    2 2 & 3 3 \\
    \end{ytableau}\hspace{3mm}
    \begin{ytableau}
    1 1 & 2 3\\
    1 2 & 3 2\\
    2 1 & 3 3 \\
    \end{ytableau}\hspace{3mm}
    \begin{ytableau}
    1 1 & 2 2\\
    2 1 & 3 2\\
    2 3 & 3 3 \\
    \end{ytableau}\hspace{3mm}
    \begin{ytableau}
    1 1 & 2 2\\
    2 1 & 3 2\\
    3 3 & 3 3 \\
    \end{ytableau}\hspace{3mm}
    \begin{ytableau}
    1 1 & 2 1\\
    1 2 & 2 3\\
    2 2 & 3 3 \\
    \end{ytableau}\hspace{3mm}
    \begin{ytableau}
    1 1 & 2 2\\
    2 1 & 2 3\\
    2 2 & 3 3 \\
    \end{ytableau}\]
\[
    \begin{ytableau}
    1 1 & 1 1 & 2 3\\
    1 2 & 2 1 & 3 2\\
    2 2 & 3 3 & 3 3\\
    \end{ytableau}\hspace{3mm}
    \begin{ytableau}
    1 1 & 1 1 & 2 2\\
    1 2 & 2 3 & 3 2\\
    2 1 & 3 3 & 3 3\\
    \end{ytableau}\]
\end{eg}

\begin{eg}Consider the Kronecker quiver with $r_0=2$ and $r_1=3$, and $3$ arrows between the vertices. The primitive semi-standard tableaux pairs of Example \ref{eg:233} all have one of the following arrow diagrams:
\[\begin{tikzpicture}[scale=0.6]
\draw (0,0) rectangle (1,1);
\draw (1,1) rectangle (2,2);
\draw (0,1) rectangle (1,2);
\draw (1,0) rectangle (2,1);
\draw (0,2) rectangle (1,3);
\draw (1,2) rectangle (2,3);
\draw[->] (0.5,2.5) -- (1.5,2.5);
\draw[->] (0.5,1.5) -- (0.5,0.5);
\draw[->] (1.5,1.5) -- (1.5,0.5);
\end{tikzpicture}
 \hspace{5mm}
\begin{tikzpicture}[scale=0.6]
\draw (0,0) rectangle (1,1);
\draw (1,1) rectangle (2,2);
\draw (0,1) rectangle (1,2);
\draw (1,0) rectangle (2,1);
\draw (0,2) rectangle (1,3);
\draw (1,2) rectangle (2,3);
\draw[->] (0.5,2.5) -- (0.5,1.5);
\draw[->] (1.5,2.5) -- (0.5,0.5);
\draw[->] (1.5,1.5) -- (1.5,0.5);
\end{tikzpicture}
\hspace{5mm}
\begin{tikzpicture}[scale=0.6]
\draw (0,0) rectangle (1,1);
\draw (1,1) rectangle (2,2);
\draw (0,1) rectangle (1,2);
\draw (1,0) rectangle (2,1);
\draw (0,2) rectangle (1,3);
\draw (1,2) rectangle (2,3);
\draw[->] (0.5,2.5) -- (0.5,1.5);
\draw[->] (1.5,2.5) -- (1.5,1.5);
\draw[->] (0.5,0.5) -- (1.5,0.5);
\end{tikzpicture}
 \hspace{5mm}
\begin{tikzpicture}[scale=0.6]
\draw (0,0) rectangle (1,1);
\draw (1,1) rectangle (2,2);
\draw (0,1) rectangle (1,2);
\draw (1,0) rectangle (2,1);
\draw (0,2) rectangle (1,3);
\draw (1,2) rectangle (2,3);
\draw[->] (0.5,2.5) -- (1.5,1.5);
\draw[->] (0.5,1.5) -- (1.5,0.5);
\draw[->] (1.5,2.5) -- (0.5,0.5);
\end{tikzpicture}\] 
\end{eg}

\section{The $r_0=r_1=2$ case and finiteness}\label{sec:2case}
In this section, we give an explicit description of the primitive semi-standard tableaux pairs of the Kronecker quiver, when $r_0=r_1=2$, for any number of arrows $K$ between the vertices.  Our strategy is as follows: in the lemmas and propositions, we rule out some basic configurations of arrows in arrow diagrams. This gives strong control over the possible shapes of primitive semi-standard tableaux. We then use an inductive argument to show finiteness; in fact, we give an explicit description of the primitive semi-standard tableaux.

As $r_0=r_1=2$, any semi-standard linked tableaux $T^-$ and $T^+$ will consist of two rows, and each of the $a \cdot r_2$ arrows in the arrow configuration of $T^\pm$ will be of length one. 

\begin{lem}\label{lem:nobarr} Let $T^\pm$ be a semi-standard, primitive, linked pair. Then the arrow configurations satisfy the following conditions. 
\begin{enumerate}\item Arrows in $T^\pm$ cannot point \emph{backwards}, i.e for an arrow $i1\to j2$, $j2$ is not positioned to the left of $i1$ in $T^\pm$.
\item Arrows in $T^\pm$ cannot point \emph{downwards}, i.e there is no arrow $i1\to j2$ where $i1$ is positioned on the first row of $T^\pm$ and $j2$ on the second row of $T^\pm$.
\end{enumerate}
\end{lem}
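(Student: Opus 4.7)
The plan is to prove both conclusions by contradiction, using the combined constraints of $T^+$ and $T^-$ being semi-standard together with the link, and invoking primitivity only for the residual case.

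For (1), suppose an arrow in $T^+$ goes from $B_1=(q_1,c_1)$ with label $i_1\cdot 1$ to $B_2=(q_2,c_2)$ with label $i_2\cdot 2$, and suppose for contradiction that $c_2<c_1$. The linked column of $T^-$ then contains $(i_1,q_1)$ in row $1$ and $(i_2,q_2)$ in row $2$; semi-standardness of that column forces $(i_1,q_1)<(i_2,q_2)$. I would split the analysis by $(q_1,q_2)$. When $q_1=q_2$, the weakly increasing row of $T^+$ yields $i_2\cdot 2\le i_1\cdot 1$, hence $i_2<i_1$, contradicting the $T^-$-inequality. When $(q_1,q_2)=(2,1)$, chasing row $1$ of $T^+$ rightward from $i_2\cdot 2$ and using strict increase in column $c_1$ again yields $i_2<i_1$ and the same contradiction. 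Only the case $(q_1,q_2)=(1,2)$, a simultaneously backward and downward arrow, survives and is handled alongside part (2) via primitivity.

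For (2), suppose an arrow goes from $(1,c_1)$ labeled $i_1\cdot 1$ to $(2,c_2)$ labeled $i_2\cdot 2$, so that the linked $T^-$-column is $((i_1,1),(i_2,2))$ with $i_1\le i_2$. If $c_1=c_2$, column $c_1$ of $T^+$ together with the one-column linked sub-tableau of $T^-$ is a valid semi-standard sub-pair of shape $2\times 1$; the remaining columns form a complementary $2\times(2a-1)$ sub-pair, still semi-standard since column removal preserves the conditions. Their leading monomials multiply to $\LM(f_{T^\pm})$, contradicting primitivity as long as $a>1/2$.

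For the residual cases (the backward-downward configuration from (1) and the non-vertical downward arrow from (2)), the plan is an uncrossing construction: choose a minimal offending arrow, locate a nearby arrow whose endpoints straddle it, and partition the columns of $T^+$ (and the corresponding columns of $T^-$) into two sets with a re-linking that removes the crossing. Each resulting sub-pair has to be verified to be semi-standard and to admit a valid semi-standard link, and the product of their leading monomials has to equal $\LM(f_{T^\pm})$, contradicting primitivity. The claim for $T^-$ follows by interchanging the roles of $T^+$ and $T^-$ in the above argument. The main obstacle is this explicit uncrossing construction: producing the partner arrow, verifying semi-standardness of the new sub-tableaux, and ensuring the re-linking preserves the column-strict and row-weak conditions may well require a minimality or inductive argument.
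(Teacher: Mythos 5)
Your reduction of part (1) to the single configuration where the arrow points simultaneously backward and downward is correct and coincides with the paper's elimination of the other three configurations: in each of those, strictness of the linked column of $T^-$ forces $i_1<i_2$ while semi-standardness of $T^+$ forces $i_2<i_1$. Your disposal of the vertical arrow in part (2) by removing an entire column is also fine (and you rightly flag the degenerate one-column tableau, which the paper glosses over). The genuine gap is that the two configurations which actually require primitivity --- the backward-and-downward arrow left over from part (1), and the non-vertical downward arrow of part (2) --- are exactly the ones you have not proved, and your sketch for them does not supply the missing idea. No partner arrow, uncrossing, or re-linking is needed for the backward case: the paper splits off precisely the two cells $i1$ and $j2$ of the offending arrow as a new $2\times 1$ column (paired with the single column of $T^-$ already linked to them), and semi-standardness of what remains follows from the chain $k*\leq A_1\leq\cdots\leq A_n\leq i1<j2\leq B_1\leq\cdots\leq B_n\leq l*$, where $A$ and $B$ are the top and bottom segments strictly between the two columns and the middle strict inequality is the one supplied by the link. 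The whole point is that every top-row entry between the two columns is at most $i1$ and every bottom-row entry between them is at least $j2$, so the re-packed complement stays column-strict.

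For the non-vertical downward arrow the argument is unavoidably case-heavy and again is not an uncrossing: writing $k*$ for the label above $j2$ and $l*$ for the label below $i1$, the case $k<l$ splits off $\{i1,j2\}$ exactly as above, while $k\geq l$ requires locating $k$ and $l$ inside $T^-$ (using part (1), already established, to constrain their arrows) and examining four subcases; two are outright contradictions, one forces $i=j=k=l$ and a column of constants that splits off, and the last splits off a $2\times 2$ block containing $i,j,k,l$. None of this is present in your proposal, and you acknowledge that the construction is the ``main obstacle''; as written, the proof is incomplete precisely where its content lies.
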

\begin{proof}

\begin{enumerate}
\item We wish to exclude the following four configurations:

\[\begin{tikzpicture}[scale=0.6,every node/.style={scale=0.8}]
\draw [dashed, color=gray] (1,0) rectangle (2,1);
\draw [dashed,color=gray] (1,-1) rectangle (2,0);
\draw (0,0) rectangle (1,1);
\draw (0,-1) rectangle (1,0);

\draw (2,0) rectangle (3,1);
\draw (2,-1) rectangle (3,0);

\draw[->] (2.2,0.5) -- (0.8,-0.5);
\node[] at (0.5,-0.5) {$j2$};
\node[] at (2.5,0.5) {$i1$};
\end{tikzpicture}
\hspace{5mm}
\begin{tikzpicture}[scale=0.6,every node/.style={scale=0.8}]
\draw [dashed, color=gray] (1,0) rectangle (2,1);
\draw [dashed,color=gray] (1,-1) rectangle (2,0);
\draw (0,0) rectangle (1,1);
\draw (0,-1) rectangle (1,0);

\draw (2,0) rectangle (3,1);
\draw (2,-1) rectangle (3,0);

\draw[->] (2.2,0.5) -- (0.8,0.5);
\node[] at (0.5,0.5) {$j2$};
\node[] at (2.5,0.5) {$i1$};
\end{tikzpicture}
 \hspace{5mm}
\begin{tikzpicture}[scale=0.6,every node/.style={scale=0.8}]
\draw [dashed, color=gray] (1,0) rectangle (2,1);
\draw [dashed,color=gray] (1,-1) rectangle (2,0);
\draw (0,0) rectangle (1,1);
\draw (0,-1) rectangle (1,0);

\draw (2,0) rectangle (3,1);
\draw (2,-1) rectangle (3,0);

\draw[->] (2.2,-0.5) -- (0.8,-0.5);
\node[] at (0.5,-0.5) {$j2$};
\node[] at (2.5,-0.5) {$i1$};
\end{tikzpicture}
\hspace{5mm}
\begin{tikzpicture}[scale=0.6,every node/.style={scale=0.8}]
\draw [dashed, color=gray] (1,0) rectangle (2,1);
\draw [dashed,color=gray] (1,-1) rectangle (2,0);
\draw (0,0) rectangle (1,1);
\draw (0,-1) rectangle (1,0);

\draw (2,0) rectangle (3,1);
\draw (2,-1) rectangle (3,0);

\draw[->] (2.2,-0.5) -- (0.8,0.5);
\node[] at (0.5,0.5) {$j2$};
\node[] at (2.5,-0.5) {$i1$};
\end{tikzpicture}\] 
Without loss of generality, we can assume these configurations are in $T^+$. Since $T^-$ is semi-standard, we have that $i1<j2$. Since $T^+$ is also semi-standard, every configuration except the first one violates this inequality. We show that the first configuration also does not occur, as it cannot be contained in any primitive tableau. More precisely, assume we have the following:
\[\begin{tikzpicture}[scale=0.6,every node/.style={scale=0.8}]
\draw [dashed, color=gray] (1,0) rectangle (2,1);
\draw [dashed,color=gray] (1,-1) rectangle (2,0);
\draw [dashed, color=gray] (-1,-1) rectangle (0,0);
\draw [dashed, color=gray] (-1,0) rectangle (0,1);
\draw [dashed,color=gray] (3,-1) rectangle (4,0);
\draw [dashed,color=gray] (3,0) rectangle (4,1);
\draw (0,0) rectangle (1,1);
\draw (0,-1) rectangle (1,0);
\draw (2,0) rectangle (3,1);
\draw (2,-1) rectangle (3,0);

\draw[->] (2.2,0.5) -- (0.8,-0.5);
\node[] at (0.5,0.5) {$k*$};
\node[] at (2.5,-0.5) {$l*$};
\node[] at (2.5,0.5) {$i1$};
\node[] at (0.5,-0.5) {$j2$};
\node[color=gray] at (1.5,0.5) {$A$};
\node[color=gray] at (1.5,-0.5) {$B$};

\end{tikzpicture}\]
where we denote the intermediate blocks by $A=(A_1,...,A_n)$ and $B=(B_1,...,B_n)$ and $k*\in\{k1,k2\}$,  $l*\in\{l1,l2\}$. We claim that this tableau splits into a $2\times 1$ block formed by the $i1$ and $j2$ cells, and a second tableau containing the rest of $T^+$ as follows:
\[\begin{tikzpicture}[scale=0.6,every node/.style={scale=0.8}]
\draw (0,0) rectangle (1,1);
\draw (0,-1) rectangle (1,0);
\node[] at (0.5,-0.5) {$j2$};
\node[] at (0.5,0.5) {$i1$};
\end{tikzpicture}
\hspace{5mm} \textup{ and } \hspace{5mm}
\begin{tikzpicture}[scale=0.6,every node/.style={scale=0.8}]
\draw [dashed, color=gray] (-1,-1) rectangle (0,0);
\draw [dashed, color=gray] (-1,0) rectangle (0,1);
\draw [dashed,color=gray] (2,-1) rectangle (3,0);
\draw [dashed,color=gray] (2,0) rectangle (3,1);
\draw (1,0) rectangle (2,1);
\draw (1,-1) rectangle (2,0);
\draw (0,0) rectangle (1,1);
\draw (0,-1) rectangle (1,0);

\node[] at (0.5,0.5) {$k*$};
\node[] at (0.5,-0.5) {$B$};
\node[] at (1.5,0.5) {$A$};
\node[] at (1.5,-0.5) {$l*$};
\end{tikzpicture}\]
Indeed, both factors are semi-standard. The $i1-j2$ block is semi-standard by hypothesis. The semi-standardness of $T^+$ implies the following inequalities 
\[k*\leq A_1\leq \ldots \leq A_n \leq i1<j2\leq B_1\leq \ldots \leq B_n \leq l*.\] One can now see that the right-hand tableau is semi-standard, as both non-strict and strict inequalities required in the definition are immediately deduced from the sequence above: in particular, we have that $A_i<B_{i+1}$, which addresses the ``shift" that appears when removing the $i1-j2$ block from $T^+$.
The splitting exists, therefore any tableau with a backwards-pointing arrow is not primitive.
\item Here we wish to exclude only one configuration, namely

\[\begin{tikzpicture}[scale=0.6,every node/.style={scale=0.8}]
\draw [dashed, color=gray] (1,0) rectangle (2,1);
\draw [dashed,color=gray] (1,-1) rectangle (2,0);
\draw [dashed, color=gray] (-1,-1) rectangle (0,0);
\draw [dashed, color=gray] (-1,0) rectangle (0,1);
\draw [dashed,color=gray] (3,-1) rectangle (4,0);
\draw [dashed,color=gray] (3,0) rectangle (4,1);
\draw (0,0) rectangle (1,1);
\draw (0,-1) rectangle (1,0);
\node[color=gray] at (1.5,-0.5) {$B$};
\node[color=gray] at (1.5,0.5) {$A$};
\node[] at (2.5,0.5) {$k*$};
\node[] at (0.5,-0.5) {$l*$};

\draw (2,0) rectangle (3,1);
\draw (2,-1) rectangle (3,0);

\draw[->] (0.7,0.5) -- (2.2,-0.5);
\node[] at (0.5,0.5) {$i1$};
\node[] at (2.5,-0.5) {$j2$};
\end{tikzpicture}\]
where the notation is as before. We now compare the relative positions of $k*$ and $l*$ to exclude all possible configurations. 

If $k<l$ then removing the $i1-j2$ block splits $T^+$ into two SSYTs. Indeed, there is nothing to check about $T^-$, while in $T^+$ we have 
\[i1\leq A_1\leq \ldots \leq A_n \leq k*<l*\leq B_1\leq \ldots \leq B_n \leq j2.\] After splitting the $i1-j2$ block, what remains is an SSYT due to the strict inequality $A_s<B_t$ for all $s, t$ (in particular if $s=t+1)$, in addition to $A_1<l*$ and $k*<B_n$. 

If $k\geq l$, since there are no backwards arrows in either $T^\pm$, the configuration in $T^-$ is:
\[\begin{tikzpicture}[scale=0.6,every node/.style={scale=0.8}]
\draw [] (1,0) rectangle (2,1);
\draw [] (1,-1) rectangle (2,0);
\draw [dashed, color=gray] (-1,-1) rectangle (0,0);
\draw [dashed, color=gray] (-1,0) rectangle (0,1);
\draw [dashed,color=gray] (3,-1) rectangle (4,0);
\draw [dashed,color=gray] (3,0) rectangle (4,1);
\draw [dashed,color=gray] (0,0) rectangle (1,1);
\draw [dashed,color=gray] (0,-1) rectangle (1,0);
\draw [dashed,color=gray] (2,0) rectangle (3,1);
\draw [dashed,color=gray] (2,-1) rectangle (3,0);

\node[] at (1.5,-0.5) {$j2$};
\node[] at (1.5,0.5) {$i1$};
\node[style=circle, fill=white] at (0,0) {$k1$};
\node[style=circle, fill=white] at (3,0) {$l2$};

\draw[->] (0.3,0) -- (1.2,-0.5);
\draw[->] (1.8,0.5) -- (2.7,0);
\end{tikzpicture}\]
There are four cases:
\begin{itemize}
\item We have $k*=k1$ and $l*=l1$ in $T^+$. In other words, both $k1$ and $l1$ are on the first row of $T^-$. This implies that the condition $k1\leq i1 \leq l2$ becomes an equality, which is a contradiction since $i1<l1$ in $T^+$.
\item A similar argument works for $k*=k2$ and $l*=l2$ in $T^+$.
\item If $k*=k1$ and $l*=l2$, then $k1\leq i1<j2\leq l2$ from $T^-$, so that $i=j=k=l$. Even further, in $T^+$ we obtain $(A_1,\ldots, A_n)=(i_1,\ldots, i_1)$ and $(B_1,\ldots, B_n)=(i_2,\ldots, i_2)$. It is then immediate that we can split off the initial $i1-j2$ (now $i1-i2$) block without violating the strict inequalities in the remaining tableau.
\item $k*=k2$, $l*=l1$ in  $T^+$, in which case $T^-$ becomes: 
\[\begin{tikzpicture}[scale=0.6,every node/.style={scale=0.8}]
\draw [] (1,0) rectangle (2,1);
\draw [] (1,-1) rectangle (2,0);
\draw [] (-1,-1) rectangle (0,0);
\draw [dashed, color=gray] (-1,0) rectangle (0,1);
\draw [dashed,color=gray] (3,-1) rectangle (4,0);
\draw [] (3,0) rectangle (4,1);
\draw [dashed,color=gray] (0,0) rectangle (1,1);
\draw [dashed,color=gray] (0,-1) rectangle (1,0);
\draw [dashed,color=gray] (2,0) rectangle (3,1);
\draw [dashed,color=gray] (2,-1) rectangle (3,0);

\node[] at (1.5,-0.5) {$j2$};
\node[] at (1.5,0.5) {$i1$};
\node[] at (-0.5,-0.5) {$k1$};
\node[] at (3.5,0.5) {$l2$};
%\node[color=gray] at () {$$};

\draw[->] (-0.2,-0.5) -- (1.2,-0.5);
\draw[->] (1.8,0.5) -- (3.2,0.5);
\end{tikzpicture}\]
We claim that we can split the $2\times 2$ block containing $i, j, k$ and $l$. In fact, we only need to show that the following sub-block of $T^-$
\[\begin{tikzpicture}[scale=0.6,every node/.style={scale=0.8}]
\draw (1,0) rectangle (2,1);
\draw (1,-1) rectangle (2,0);
\draw (0,0) rectangle (1,1);
\draw (0,-1) rectangle (1,0);

\node[] at (0.5,0.5) {$i1$};
\node[] at (0.5,-0.5) {$k1$};
\node[] at (1.5,0.5) {$l2$};
\node[] at (1.5,-0.5) {$j2$};
\end{tikzpicture}\]
is semi-standard: its complement in $T^-$ is semi-standard by construction, and we automatically obtain that $T^+$ splits into two rectangular SSYTs. The above $2\times 2$ block is not semi-standard only if $i=k$ or $l=j$. But from $T^+$ and our hypothesis we know that $j> k\geq l > i$, so neither of these cases occur.
\end{itemize}
 We are thus able to conclude that there is no primitive linked SSYT-pair in which one tableau contains a \emph{downwards} arrow.
\end{enumerate}

\end{proof}
\begin{lem} \label{lem:lem2} Let $(T^\pm)$ be a pair of primitive, semi-standard linked tableaux. Consider labels $a1$ and $d2$ that appear in the first and second row respectively of $T^+$ (and hence also, respectively, in the first and second row of $T^-$). Then either $a1$ is strictly the left of $d2$ in both $T^+$ and $T^-$, or $a1$ is strictly to the right of $d2$ in both $T^+$ and $T^-$. 
\end{lem}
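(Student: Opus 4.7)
The plan is to argue by contradiction, enumerating the ways the conclusion can fail: (i) $a1$ and $d2$ share a column of $T^+$; (ii) they share a column of $T^-$; (iii) $a1$ is strictly left of $d2$ in exactly one of $T^\pm$. I intend to rule out (i) and (ii) directly using Lemma~\ref{lem:nobarr}, and to dispose of (iii) by producing a splitting of $T^\pm$, contradicting primitivity.

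For (i), the single column of $T^+$ containing $a1$ in row 1 and $d2$ in row 2 corresponds, via the link, to an arrow in the $T^-$ diagram whose endpoints are the corresponding boxes of $T^-$. Since those boxes lie in rows 1 and 2 of $T^-$ respectively, the arrow is downward, contradicting Lemma~\ref{lem:nobarr}(2). Case (ii) follows from the symmetric argument applied in $T^+$.

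For (iii), I may assume by symmetry that $a1$ at column $c_1$ of $T^+$ is strictly left of $d2$ at column $c_2$, while in $T^-$ the position $c_1'$ of $a1$ is strictly right of the position $c_2'$ of $d2$. Using Lemma~\ref{lem:nobarr} in the $T^+$ diagram I would locate the partners: $a1$ must be paired horizontally in row 1 with some $e2$ at column $c_e \ge c_1$, and $d2$ paired horizontally in row 2 with some $f1$ at column $c_f \le c_2$. Applying the analogous analysis in $T^-$ forces columns $c_1'$ and $c_2'$ of $T^-$ to take the form $(a1, e1)$ and $(f2, d2)$ in rows $(1,2)$, and pins $a1$'s $T^-$-partner $g2$ in row 1 to the right of $c_1'$ and $d2$'s $T^-$-partner $g'1$ in row 2 to the left of $c_2'$. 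Row weak-inequalities in $T^-$ (using $c_2' < c_1'$) together with column strict-inequalities then yield the strict chain $f < a$, $a < g$, $g' < d$, $d < e$, and row weak-inequalities in $T^+$ give $a \le g'$ and $g \le d$. These inequalities constrain the positions of the partners so that the set of columns of $T^-$ obtained by the link-closure of $\{c_1', c_2'\}$ (and the corresponding columns of $T^+$) is a proper subset, giving sub-linked-pairs $T_1^\pm, T_2^\pm$ with $\LM(f_{T^\pm}) = \LM(f_{T_1^\pm})\,\LM(f_{T_2^\pm})$. This contradicts the primitivity of $T^\pm$.

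The main obstacle will be establishing that the link-closure is a proper subset of columns and that both sub-linked-pairs $T_i^\pm$ remain semi-standard. The strict chain $f < a < g \le d < e$ is what prevents further columns from being pulled into the closure via the row weak-ordering, and the column strict-inequalities of $T^\pm$ are preserved under extraction because the closure respects the link. Handling the potential equalities $a = g'$ and $g = d$ requires careful tracking of second entries, relying again on Lemma~\ref{lem:nobarr}'s "no downward arrow" rule in both arrow diagrams.
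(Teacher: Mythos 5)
Your treatment of the degenerate cases (i) and (ii) is correct and coincides with the paper's first step: a shared column in one tableau forces a downward arrow in the other, contradicting Lemma~\ref{lem:nobarr}. Your identification of the four neighbouring labels in case (iii) and the resulting chains of inequalities (column strictness plus row weak monotonicity in both tableaux) is also correct and matches the data the paper assembles.

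The gap is in how case (iii) is finished. You propose to take the ``link-closure'' of the two full columns of $T^-$ containing $a1$ and $d2$ and assert that the inequalities force this closure to be a proper subset of the columns; but you give no argument for properness, and none is available: starting from those two columns, their images occupy up to four columns of $T^+$, completing those and pulling back can keep growing, and nothing in the inequalities $f<a\le g'<d$, $a<g\le d<e$ bounds this process short of the whole tableau. The paper's resolution is structurally different and is the actual content of the lemma: it does \emph{not} close up the columns containing $a1$ and $d2$. Instead it compares the two neighbour labels (your $g$ and $g'$, its $b$ and $c$) and, depending on which is larger, extracts either the two boxes $a1$ and $d2$ \emph{alone} --- reassembled as a new single-column linked pair that is not a column of either original tableau --- or a specific $2\times 2$ block, and then verifies by hand that both the extracted pair and the re-closed remainder are semi-standard (this is where the inequality $c<d$, resp.\ $b\le c$, is used to control the row that shifts). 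Your proposal neither makes this case split nor performs the semi-standardness check of the remainder; you explicitly defer both as ``the main obstacle,'' so the essential step of the proof is missing. To repair it, replace the closure argument by the direct extraction of $\{a1,d2\}$ when $g'<g$ and of the $2\times 2$ block on the labels $a,g,g',d$ when $g'\ge g$, and verify semi-standardness of both pieces in each case.
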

\begin{proof} We first rule out the case where $a1$ is strictly above $d2$ in $T^+$. Then there must be a downwards arrow in $T^-$, which is impossible by Lemma \ref{lem:nobarr}.  

Suppose that $a1$ is strictly to the left of $d2$ in $T^-$, and strictly to the right of $d2$ in $T^+$. That is, the configuration in $T^-$ is of the following form, where the second digits of the labels above and below $a1$ and $d2$ are determined by using Lemma \ref{lem:nobarr}.
\[\begin{tikzpicture}[scale=0.6,every node/.style={scale=0.8}]
\draw [] (0,0) rectangle (1,1);
\draw [] (0,1) rectangle (1,2);
\draw [] (4,0) rectangle (5,1);
\draw [] (4,1) rectangle (5,2);
\draw [dashed, color=gray] (-1,0) rectangle (6,1);
\draw [dashed, color=gray] (-1,1) rectangle (6,2);

\node[] at (0.5,1.5) {$a1$};
\node[] at (0.5,0.5) {$b1$};

\node[] at (4.5,0.5) {$d2$};
\node[] at (4.5,1.5) {$c2$};

\end{tikzpicture}\] 
In $T^+$, again by using Lemma \ref{lem:nobarr}, the configuration must therefore be:
\[\begin{tikzpicture}[scale=0.6,every node/.style={scale=0.8}]
\draw [] (0,0) rectangle (1,1);
\draw [] (-2,0) rectangle (-1,1);
\draw [] (6,1) rectangle (7,2);
\draw [] (4,1) rectangle (5,2);
\draw [dashed, color=gray] (-3,0) rectangle (8,1);
\draw [dashed, color=gray] (-3,1) rectangle (8,2);

\node[] at (4.5,1.5) {$a1$};
\node[] at (6.5,1.5) {$b2$};
\draw[->] (4.8,1.5) -- (6.2,1.5);

\node[] at (0.5,0.5) {$d2$};
\node[] at (-1.5,0.5) {$c1$};
\draw[->] (-1.2,0.5) -- (0.2,0.5);

\end{tikzpicture}\] 
If $c<b$, then split $T^\pm$ into the tableaux pair 
\[ \begin{tikzpicture}[scale=0.6,every node/.style={scale=0.8}]
\draw [] (0,0) rectangle (1,1);
\draw [] (0,1) rectangle (1,2);
\node[] at (0.5,1.5) {$a1$};
\node[] at (0.5,0.5) {$d2$};
\end{tikzpicture} \hspace{5mm}
\begin{tikzpicture}[scale=0.6,every node/.style={scale=0.8}]
\draw [] (0,0) rectangle (1,1);
\draw [] (0,1) rectangle (1,2);
\node[] at (0.5,1.5) {$a1$};
\node[] at (0.5,0.5) {$d2$};
\end{tikzpicture}
\] 
and the tableaux $T^\pm_1$ obtained by removing $a1$ and $d2$ from $T^\pm$. Since $a1<d2$, the tableaux above are a semi-standard linked tableaux pair. $T^-_1$ is clearly semi-standard and $T^\pm_1$ are a linked pair. It remains to check that $T^+_1$ is semi-standard. Since $c<d$, every label in  the first row of $T^+$, to the right of $a1$ and the left of $d2$ is strictly smaller than every label in  the second row of $T^+$, to the right of $a1$ and the left of $d2$. So even after removing $a1$ and $d2$, the tableau remains semi-standard. Therefore, $T^\pm$ is not primitive. 

If $c \geq b$, split $T^\pm$ into the tableaux pair
\[ \begin{tikzpicture}[scale=0.6,every node/.style={scale=0.8}]
\draw [] (0,0) rectangle (1,1);
\draw [] (0,1) rectangle (1,2);
\draw [] (1,0) rectangle (2,1);
\draw [] (1,1) rectangle (2,2);

\node[] at (0.5,0.5) {$b1$};
\node[] at (0.5,1.5) {$a1$};
\node[] at (1.5,0.5) {$d2$};
\node[] at (1.5,1.5) {$c2$};
\end{tikzpicture} \hspace{5mm}
\begin{tikzpicture}[scale=0.6,every node/.style={scale=0.8}]
\draw [] (0,0) rectangle (1,1);
\draw [] (0,1) rectangle (1,2);
\draw [] (1,0) rectangle (2,1);
\draw [] (1,1) rectangle (2,2);

\node[] at (0.5,0.5) {$c1$};
\node[] at (0.5,1.5) {$a1$};
\node[] at (1.5,0.5) {$d2$};
\node[] at (1.5,1.5) {$b2$};
\end{tikzpicture}
\] 
and the tableaux $T^\pm_1$ obtained by removing these four labels from $T^\pm$. The tableaux above are semi-standard as
\[a<b \leq c,\]
\[b \leq c <d.\]
It is clear by the shape of what we have removed that $T^\pm_1$ are also semi-standard. So $T^\pm$ is not primitive.

By symmetry we can swap the roles of $T^+$ and $T^-$ and use the same argument in the last remaining case.

\end{proof}
The proof of the next proposition involves multiple cases, yet is essential to the main theorem of this section.

\begin{prop}\label{prop:cases} Let $T^\pm$ be a primitive, semi-standard linked tableaux pair. Then neither $T^+$ nor $T^-$ contains a configuration of the following form:
\[\begin{tikzpicture}[scale=0.6,every node/.style={scale=0.8}]
\draw [] (0,1) rectangle (1,2);
\draw [] (-2,0) rectangle (-1,1);
\draw [] (4,0) rectangle (5,1);
\draw [thick,double] (1,0)--(1,2);
\draw [dashed, color=gray] (-3,0) rectangle (8,1);
\draw [dashed, color=gray] (-3,1) rectangle (8,2);

\node[] at (0.5,1.5) {$a1$};
\node[] at (-1.5,0.5) {$b_1 1$};
\node[] at (4.5,0.5) {$d2$};
\draw[->] (-1.2,0.5) -- (4.2,0.5);
\end{tikzpicture}\] 
We assume that $b_1 1$ is to the left of, and $d2$ to the right of, the vertical double line. 
\end{prop}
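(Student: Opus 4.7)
The plan is to argue by contradiction: assume $T^\pm$ is primitive and semi-standard with the forbidden configuration sitting inside $T^+$ (the $T^-$ case is symmetric), and exhibit a splitting contradicting primitivity. The target of the arrow emanating from $a1$ is forced by Lemma \ref{lem:nobarr}(2) to lie in the top row: call it $a'2$, at some column $c_{a'}>c_a$. Passing through the link, I would extract the two relevant columns of $T^-$: the arrow $b_1 1\to d2$ corresponds to the $T^-$-column $\bigl(\begin{smallmatrix} b_1 2 \\ d2 \end{smallmatrix}\bigr)$, and the arrow $a1\to a'2$ corresponds to the $T^-$-column $\bigl(\begin{smallmatrix} a1 \\ a'1 \end{smallmatrix}\bigr)$. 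Because $a1$ sits in row 1 of $T^+$ and $d2$ sits in row 2 of $T^+$ with $c_a<c_2$, Lemma \ref{lem:lem2} forces $a1$ to lie strictly to the left of $d2$ in $T^-$ as well.

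Once I know the relative position of these two columns in $T^-$, the semi-standardness of $T^-$ (weakly increasing rows between them, strictly increasing columns inside them) delivers the four inequalities I need: $a\le b_1$, $a'\le d$, $a<a'$, $b_1<d$. The candidate splitting is then the $2\times 2$ linked pair
\[
T_1^+ \;=\; \begin{ytableau} a1 & a'2 \\ b_1 1 & d2 \end{ytableau}, \qquad
T_1^- \;=\; \begin{ytableau} a1 & b_1 2 \\ a'1 & d2 \end{ytableau},
\]
with $T_2^\pm$ obtained by discarding these four labels from the multisets underlying $T^\pm$ and then rearranging into the unique semi-standard rectangular tableaux of the appropriate shapes. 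When the inequalities above are all strict ($a<b_1$ and $a'<d$), the $2\times 2$ block $T_1^\pm$ is visibly semi-standard and linked, so the entire contradiction reduces to checking that the remainder $T_2^\pm$ is semi-standard; this I would verify by case analysis on the four possible relative orderings of $c_1, c_a, c_{a'}, c_2$, tracking how the columns of $T^+$ get re-paired after deletion and using the row/column monotonicity of $T^+$ (together with the analogous facts for $T^-$) to see that each re-paired column still has a strict vertical inequality.

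The main obstacle will be the boundary cases where one or both of the inequalities in step 4 degenerate to equalities ($a=b_1$ or $a'=d$): then the $2\times 2$ split above is no longer semi-standard. I expect to handle these by a smaller split, replacing the $2\times 2$ block with the $2\times 1$ block $\{a1, d2\}$ (valid because $a\le b_1<d$ always gives the strict inequality $a<d$ needed to make this column semi-standard, and the linked partner in $T^-$ happens to carry the identical labels), while in the equality case the extra coincidence $a=b_1$ (respectively $a'=d$) produces further rigidity on the intermediate cells of the relevant rows of $T^-$ — specifically forcing a block of repeated labels that must match the block removed — so that the residual $T_2^\pm$ is again semi-standard. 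A symmetric argument, swapping the roles of $T^+$ and $T^-$ and using Lemma \ref{lem:nobarr}(2) in the opposite direction, excludes the same configuration appearing inside $T^-$. In all cases this contradicts the primitivity of $T^\pm$, completing the proof.
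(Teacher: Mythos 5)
Your high-level strategy coincides with the paper's: derive the four inequalities from Lemma \ref{lem:lem2} and the two relevant columns of $T^-$, then contradict primitivity by exhibiting an explicit splitting. The $2\times 2$ block you propose (the two $T^-$-columns through $a1$ and through $d2$) is exactly what the paper removes in one of its subcases, and your observation that the remainder of $T^-$ is then automatically semi-standard (you delete two full columns) is correct. The gap is the assertion that the remainder of $T^+$ is semi-standard. Deleting $a1,a'2$ from row 1 and $b_11,d2$ from row 2 re-pairs the columns of $T^+$; this is harmless when every deleted first-row box is weakly preceded by a deleted second-row box, i.e.\ when $a'2$ sits weakly to the right of the column of $d2$ (the paper's case $c_2\le c_1$, writing $c_22$ for the label above $d2$). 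But when $a'2$ lies strictly between the columns of $a1$ and $d2$ (the paper's case $c_2>c_1$, which your hypotheses do not exclude), every column of $T^+$ between $a'2$ and $d2$ gets matched after deletion with the second-row entry one step to its \emph{left}, and semi-standardness of the remainder now requires label inequalities such as $c_22<(\text{label below }a'2)$ that do not follow from semi-standardness of $T^\pm$. This is not a bookkeeping issue about relative orderings of column positions: in this regime the correct splitting is a genuinely different one (the paper removes a six-box sub-pair $\{a1,c_12,c_22;\,b_11,b_21,d2\}$, and in the degenerate sub-sub-case $b_2=b_1=c_1<c_2$ it must even inspect the box below $b2$ before deciding what to remove).

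The fallback for the boundary cases is likewise insufficient. Removing only $\{a1,d2\}$ re-pairs every column of $T^+$ between them and forces the requirement $c_22<b_21$ (label above $d2$ versus label below $a1$); the paper shows this removal works only when two equalities hold simultaneously (e.g.\ $a=c_2$ and $d=c_1$, or $a=b_1$ and $d=b_2$). If, say, $a=b_1$ but $a<c_2$ and $d>b_2$, the correct move is instead to split off the \emph{other} $2\times 2$ block, namely the two columns of $T^+$ through $a1$ and through $d2$ (the boxes $\{a1,b_21\}$ and $\{c_22,d2\}$), which your argument never considers. So the proposal finds the right first move but omits the hard half of the case analysis, which is where most of the paper's proof lives.
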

\begin{proof}
To prove the statement, we need a bit more notation. Without loss of generality, we assume that the offending configuration appears in $T^+$. Let $b_2 1$ be the label under $a1$ in $T^+$. Let $c_2 2$ be label above $d2$ in $T^+$. Then the configuration in $T^+$ is
\[\begin{tikzpicture}[scale=0.6,every node/.style={scale=0.8}]
\draw [] (0,1) rectangle (1,2);
\draw [] (-2,0) rectangle (-1,1);
\draw [] (0,0) rectangle (1,1);
\draw [] (4,0) rectangle (5,1);
\draw [] (4,1) rectangle (5,2);

\draw [dashed, color=gray] (-3,0) rectangle (8,1);
\draw [dashed, color=gray] (-3,1) rectangle (8,2);

\node[] at (0.5,1.5) {$a1$};
\node[] at (0.5,0.5) {$b_2 1$};
\node[] at (-1.5,0.5) {$b_1 1$};
\node[] at (4.5,0.5) {$d2$};
\node[] at (4.5,1.5) {$c_2 2$};

\draw[->] (-1.2,0.5) to [ bend right=45] (4.2,0.5);
%\draw[->] (0.8,1.5) to [ bend left=45] (6.1,1.6);

\end{tikzpicture}\] 

In $T^-$, by Lemma \ref{lem:lem2}, $a1$ is again to the left of $d2$, i.e. $b_1 \geq a$. Let $c_1 1$ be the label below $a 1$ in $T^-$. There are two cases, which we consider separately:
\begin{enumerate}
\item $c_2 \leq c_1$,
\item $c_2 > c_1.$
\end{enumerate}
Suppose first that $c_2 \leq c_1$. Then, by changing the link trivially if necessary, we have the configurations:
\[\begin{tikzpicture}[scale=0.6,every node/.style={scale=0.8}]
\draw [] (-2,0) rectangle (-1,1);
\draw [] (0,1) rectangle (1,2);
\draw [] (0,0) rectangle (1,1);
\draw [] (4,0) rectangle (5,1);
\draw [] (4,1) rectangle (5,2);
\draw [] (6,1) rectangle (7,2);

\draw [dashed, color=gray] (-3,0) rectangle (8,1);
\draw [dashed, color=gray] (-3,1) rectangle (8,2);

\node[] at (0.5,1.5) {$a1$};
\node[] at (0.5,0.5) {$c_1 1$};
\node[] at (-1.5,0.5) {$c_2 1$};
\node[] at (4.5,1.5) {$b_1 2$};
\node[] at (4.5,0.5) {$d2$};
\node[] at (6.5,1.5) {$b_2 2$};
\node[] at (-4,1) {$T^-=$};

\draw[->] (0.8,1.5) to [ bend left=45] (6.1,1.6);
\draw[->] (-1.2,0.5) to [ bend right=45] (4.1,0.6);

\end{tikzpicture}\]
\[\begin{tikzpicture}[scale=0.6,every node/.style={scale=0.8}]
\draw [] (0,1) rectangle (1,2);
\draw [] (-2,0) rectangle (-1,1);
\draw [] (0,0) rectangle (1,1);
\draw [] (4,0) rectangle (5,1);
\draw [] (4,1) rectangle (5,2);
\draw [] (6,1) rectangle (7,2);
\draw [dashed, color=gray] (-3,0) rectangle (8,1);
\draw [dashed, color=gray] (-3,1) rectangle (8,2);

\node[] at (0.5,1.5) {$a1$};
\node[] at (0.5,0.5) {$b_2 1$};
\node[] at (-1.5,0.5) {$b_1 1$};
\node[] at (4.5,0.5) {$d2$};
\node[] at (4.5,1.5) {$c_2 2$};
\node[] at (6.5,1.5) {$c_1 2$};

\draw[->] (-1.2,0.5) to [ bend right=45] (4.2,0.5);
\draw[->] (0.8,1.5) to [ bend left=45] (6.1,1.6);
\node[] at (-4,1) {$T^+=$};

\end{tikzpicture}\]  
We have four inequalities:
\[\begin{tikzpicture}[scale=0.6,every node/.style={scale=0.8}]
\draw [] (-1,1) rectangle (1,2);
\draw [] (1,0) rectangle (3,1);
\draw [] (-1,0) rectangle (1,1);
\draw [] (1,1) rectangle (3,2);

\node[] at (0,1.5) {$a \leq b_1$};
\node[] at (0,0.5) {$d \geq c_1$};
\node[] at (2,1.5) {$a \leq c_2$};
\node[] at (2,0.5) {$d \geq b_2$};
\end{tikzpicture}\]  
We consider three subcases, where some of these inequalities are strict or equalities, that together cover all possibilities:
\begin{itemize}
\item Suppose the inequalities are of one of the forms:
\[\begin{tikzpicture}[scale=0.6,every node/.style={scale=0.8}]
\draw [] (0,1) rectangle (1,2);
\draw [] (1,0) rectangle (2,1);
\draw [] (0,0) rectangle (1,1);
\draw [] (1,1) rectangle (2,2);
\node[] at (0.5,1.5) {$<$};
\node[] at (0.5,0.5) {$>$};
\node[] at (1.5,1.5) {$\leq$};
\node[] at (1.5,0.5) {$\geq$};
\end{tikzpicture} \hspace{5mm}
\begin{tikzpicture}[scale=0.6,every node/.style={scale=0.8}]
\draw [] (0,1) rectangle (1,2);
\draw [] (1,0) rectangle (2,1);
\draw [] (0,0) rectangle (1,1);
\draw [] (1,1) rectangle (2,2);
\node[] at (1.5,1.5) {$<$};
\node[] at (1.5,0.5) {$>$};
\node[] at (0.5,1.5) {$\leq$};
\node[] at (0.5,0.5) {$\geq$};
\end{tikzpicture}\]  
In the first case, we remove the highlighted boxes below:
\[\begin{tikzpicture}[scale=0.6,every node/.style={scale=0.8}]
\draw [] (-2,0) rectangle (-1,1);
\draw [fill=yellow] (0,1) rectangle (1,2);
\draw [fill=yellow] (0,0) rectangle (1,1);
\draw [fill=yellow] (4,0) rectangle (5,1);
\draw [fill=yellow] (4,1) rectangle (5,2);
\draw [] (6,1) rectangle (7,2);

\draw [dashed, color=gray] (-3,0) rectangle (8,1);
\draw [dashed, color=gray] (-3,1) rectangle (8,2);

\node[] at (0.5,1.5) {$a1$};
\node[] at (0.5,0.5) {$c_1 1$};
\node[] at (-1.5,0.5) {$c_2 1$};
\node[] at (4.5,1.5) {$b_1 2$};
\node[] at (4.5,0.5) {$d2$};
\node[] at (6.5,1.5) {$b_2 2$};
\node[] at (-4,1) {$T^-=$};

\draw[->] (0.8,1.5) to [ bend left=45] (6.1,1.6);
\draw[->] (-1.2,0.5) to [ bend right=45] (4.1,0.6);

\end{tikzpicture} \hspace{5mm} \begin{tikzpicture}[scale=0.6,every node/.style={scale=0.8}]
\draw [fill=yellow] (0,1) rectangle (1,2);
\draw [fill=yellow] (-2,0) rectangle (-1,1);
\draw [] (0,0) rectangle (1,1);
\draw [fill=yellow] (4,0) rectangle (5,1);
\draw [] (4,1) rectangle (5,2);
\draw [fill=yellow] (6,1) rectangle (7,2);
\draw [dashed, color=gray] (-3,0) rectangle (8,1);
\draw [dashed, color=gray] (-3,1) rectangle (8,2);

\node[] at (0.5,1.5) {$a1$};
\node[] at (0.5,0.5) {$b_2 1$};
\node[] at (-1.5,0.5) {$b_1 1$};
\node[] at (4.5,0.5) {$d2$};
\node[] at (4.5,1.5) {$c_2 2$};
\node[] at (6.5,1.5) {$c_1 2$};

\draw[->] (-1.2,0.5) to [ bend right=45] (4.2,0.5);
\draw[->] (0.8,1.5) to [ bend left=45] (6.1,1.6);
\node[] at (-4,1) {$T^+=$};

\end{tikzpicture}\]  
Using the inequalities, this gives a splitting of $T^\pm$, which therefore is not primitive. We leave it to the reader to check that the new tableaux are semi-standard. 

In the second case, we remove the highlighted boxes below:
\[\begin{tikzpicture}[scale=0.6,every node/.style={scale=0.8}]
\draw [fill=yellow] (-2,0) rectangle (-1,1);
\draw [fill=yellow] (0,1) rectangle (1,2);
\draw [] (0,0) rectangle (1,1);
\draw [fill=yellow] (4,0) rectangle (5,1);
\draw [] (4,1) rectangle (5,2);
\draw [fill=yellow] (6,1) rectangle (7,2);

\draw [dashed, color=gray] (-3,0) rectangle (8,1);
\draw [dashed, color=gray] (-3,1) rectangle (8,2);

\node[] at (0.5,1.5) {$a1$};
\node[] at (0.5,0.5) {$c_1 1$};
\node[] at (-1.5,0.5) {$c_2 1$};
\node[] at (4.5,1.5) {$b_1 2$};
\node[] at (4.5,0.5) {$d2$};
\node[] at (6.5,1.5) {$b_2 2$};
\node[] at (-4,1) {$T^-=$};

\draw[->] (0.8,1.5) to [ bend left=45] (6.1,1.6);
\draw[->] (-1.2,0.5) to [ bend right=45] (4.1,0.6);

\end{tikzpicture} \hspace{5mm} \begin{tikzpicture}[scale=0.6,every node/.style={scale=0.8}]
\draw [fill=yellow] (0,1) rectangle (1,2);
\draw [] (-2,0) rectangle (-1,1);
\draw [fill=yellow] (0,0) rectangle (1,1);
\draw [fill=yellow] (4,0) rectangle (5,1);
\draw [fill=yellow] (4,1) rectangle (5,2);
\draw [] (6,1) rectangle (7,2);
\draw [dashed, color=gray] (-3,0) rectangle (8,1);
\draw [dashed, color=gray] (-3,1) rectangle (8,2);

\node[] at (0.5,1.5) {$a1$};
\node[] at (0.5,0.5) {$b_2 1$};
\node[] at (-1.5,0.5) {$b_1 1$};
\node[] at (4.5,0.5) {$d2$};
\node[] at (4.5,1.5) {$c_2 2$};
\node[] at (6.5,1.5) {$c_1 2$};

\draw[->] (-1.2,0.5) to [ bend right=45] (4.2,0.5);
\draw[->] (0.8,1.5) to [ bend left=45] (6.1,1.6);
\node[] at (-4,1) {$T^+=$};

\end{tikzpicture}\]  

\item Suppose the inequalities are of one of the forms:
\[\begin{tikzpicture}[scale=0.6,every node/.style={scale=0.8}]
\draw [] (0,1) rectangle (1,2);
\draw [] (1,0) rectangle (2,1);
\draw [] (0,0) rectangle (1,1);
\draw [] (1,1) rectangle (2,2);
\node[] at (0.5,1.5) {$\leq$};
\node[] at (0.5,0.5) {$=$};
\node[] at (1.5,1.5) {$=$};
\node[] at (1.5,0.5) {$\geq$};
\end{tikzpicture} \hspace{5mm}
\begin{tikzpicture}[scale=0.6,every node/.style={scale=0.8}]
\draw [] (0,1) rectangle (1,2);
\draw [] (1,0) rectangle (2,1);
\draw [] (0,0) rectangle (1,1);
\draw [] (1,1) rectangle (2,2);
\node[] at (0.5,1.5) {$=$};
\node[] at (0.5,0.5) {$\geq$};
\node[] at (1.5,1.5) {$\leq$};
\node[] at (1.5,0.5) {$=$};
\end{tikzpicture}\]  
In either case, we remove the highlighted boxes below:
\[\begin{tikzpicture}[scale=0.6,every node/.style={scale=0.8}]
\draw [] (-2,0) rectangle (-1,1);
\draw [fill=yellow] (0,1) rectangle (1,2);
\draw [] (0,0) rectangle (1,1);
\draw [fill=yellow] (4,0) rectangle (5,1);
\draw [] (4,1) rectangle (5,2);
\draw [] (6,1) rectangle (7,2);

\draw [dashed, color=gray] (-3,0) rectangle (8,1);
\draw [dashed, color=gray] (-3,1) rectangle (8,2);

\node[] at (0.5,1.5) {$a1$};
\node[] at (0.5,0.5) {$c_1 1$};
\node[] at (-1.5,0.5) {$c_2 1$};
\node[] at (4.5,1.5) {$b_1 2$};
\node[] at (4.5,0.5) {$d2$};
\node[] at (6.5,1.5) {$b_2 2$};
\node[] at (-4,1) {$T^-=$};

\draw[->] (0.8,1.5) to [ bend left=45] (6.1,1.6);
\draw[->] (-1.2,0.5) to [ bend right=45] (4.1,0.6);

\end{tikzpicture} \hspace{5mm} \begin{tikzpicture}[scale=0.6,every node/.style={scale=0.8}]
\draw [fill=yellow] (0,1) rectangle (1,2);
\draw [] (-2,0) rectangle (-1,1);
\draw [] (0,0) rectangle (1,1);
\draw [fill=yellow] (4,0) rectangle (5,1);
\draw [] (4,1) rectangle (5,2);
\draw [] (6,1) rectangle (7,2);
\draw [dashed, color=gray] (-3,0) rectangle (8,1);
\draw [dashed, color=gray] (-3,1) rectangle (8,2);

\node[] at (0.5,1.5) {$a1$};
\node[] at (0.5,0.5) {$b_2 1$};
\node[] at (-1.5,0.5) {$b_1 1$};
\node[] at (4.5,0.5) {$d2$};
\node[] at (4.5,1.5) {$c_2 2$};
\node[] at (6.5,1.5) {$c_1 2$};

\draw[->] (-1.2,0.5) to [ bend right=45] (4.2,0.5);
\draw[->] (0.8,1.5) to [ bend left=45] (6.1,1.6);
\node[] at (-4,1) {$T^+=$};

\end{tikzpicture}\]  
It is clear that the tableaux removed from $T^\pm$ are semi-standard. The equalities can be used to show that what remains is semi-standard. For example, in the first case, this follows from the inequalities
\[c_2=a<b_2, c_1=d>b_1.\]
The second case is similar. 

\item Suppose the inequalities are of one of the forms:
\[\begin{tikzpicture}[scale=0.6,every node/.style={scale=0.8}]
\draw [] (0,1) rectangle (1,2);
\draw [] (1,0) rectangle (2,1);
\draw [] (0,0) rectangle (1,1);
\draw [] (1,1) rectangle (2,2);
\node[] at (0.5,1.5) {$<$};
\node[] at (0.5,0.5) {$\geq$};
\node[] at (1.5,1.5) {$<$};
\node[] at (1.5,0.5) {$\geq$};
\end{tikzpicture} \hspace{5mm}
\begin{tikzpicture}[scale=0.6,every node/.style={scale=0.8}]
\draw [] (0,1) rectangle (1,2);
\draw [] (1,0) rectangle (2,1);
\draw [] (0,0) rectangle (1,1);
\draw [] (1,1) rectangle (2,2);
\node[] at (1.5,1.5) {$\leq$};
\node[] at (1.5,0.5) {$>$};
\node[] at (0.5,1.5) {$\leq$};
\node[] at (0.5,0.5) {$>$};
\end{tikzpicture}\]  

In the first case, we remove the highlighted boxes below:
\[\begin{tikzpicture}[scale=0.6,every node/.style={scale=0.8}]
\draw [fill=yellow] (-2,0) rectangle (-1,1);
\draw [fill=yellow] (0,1) rectangle (1,2);
\draw [] (0,0) rectangle (1,1);
\draw [fill=yellow] (4,0) rectangle (5,1);
\draw [fill=yellow] (4,1) rectangle (5,2);
\draw [] (6,1) rectangle (7,2);

\draw [dashed, color=gray] (-3,0) rectangle (8,1);
\draw [dashed, color=gray] (-3,1) rectangle (8,2);

\node[] at (0.5,1.5) {$a1$};
\node[] at (0.5,0.5) {$c_1 1$};
\node[] at (-1.5,0.5) {$c_2 1$};
\node[] at (4.5,1.5) {$b_1 2$};
\node[] at (4.5,0.5) {$d2$};
\node[] at (6.5,1.5) {$b_2 2$};
\node[] at (-4,1) {$T^-=$};

\draw[->] (0.8,1.5) to [ bend left=45] (6.1,1.6);
\draw[->] (-1.2,0.5) to [ bend right=45] (4.1,0.6);

\end{tikzpicture} \hspace{5mm}
 \begin{tikzpicture}[scale=0.6,every node/.style={scale=0.8}]
\draw [fill=yellow] (0,1) rectangle (1,2);
\draw [fill=yellow] (-2,0) rectangle (-1,1);
\draw [] (0,0) rectangle (1,1);
\draw [fill=yellow] (4,0) rectangle (5,1);
\draw [fill=yellow] (4,1) rectangle (5,2);
\draw [] (6,1) rectangle (7,2);
\draw [dashed, color=gray] (-3,0) rectangle (8,1);
\draw [dashed, color=gray] (-3,1) rectangle (8,2);

\node[] at (0.5,1.5) {$a1$};
\node[] at (0.5,0.5) {$b_2 1$};
\node[] at (-1.5,0.5) {$b_1 1$};
\node[] at (4.5,0.5) {$d2$};
\node[] at (4.5,1.5) {$c_2 2$};
\node[] at (6.5,1.5) {$c_1 2$};

\draw[->] (-1.2,0.5) to [ bend right=45] (4.2,0.5);
\draw[->] (0.8,1.5) to [ bend left=45] (6.1,1.6);
\node[] at (-4,1) {$T^+=$};
\end{tikzpicture}\]   

In the second case, we remove the highlighted boxes below:
\[\begin{tikzpicture}[scale=0.6,every node/.style={scale=0.8}]
\draw [] (-2,0) rectangle (-1,1);
\draw [fill=yellow] (0,1) rectangle (1,2);
\draw [fill=yellow] (0,0) rectangle (1,1);
\draw [fill=yellow] (4,0) rectangle (5,1);
\draw [] (4,1) rectangle (5,2);
\draw [fill=yellow] (6,1) rectangle (7,2);

\draw [dashed, color=gray] (-3,0) rectangle (8,1);
\draw [dashed, color=gray] (-3,1) rectangle (8,2);

\node[] at (0.5,1.5) {$a1$};
\node[] at (0.5,0.5) {$c_1 1$};
\node[] at (-1.5,0.5) {$c_2 1$};
\node[] at (4.5,1.5) {$b_1 2$};
\node[] at (4.5,0.5) {$d2$};
\node[] at (6.5,1.5) {$b_2 2$};
\node[] at (-4,1) {$T^-=$};

\draw[->] (0.8,1.5) to [ bend left=45] (6.1,1.6);
\draw[->] (-1.2,0.5) to [ bend right=45] (4.1,0.6);

\end{tikzpicture} \hspace{5mm} \begin{tikzpicture}[scale=0.6,every node/.style={scale=0.8}]
\draw [fill=yellow] (0,1) rectangle (1,2);
\draw [] (-2,0) rectangle (-1,1);
\draw [fill=yellow] (0,0) rectangle (1,1);
\draw [fill=yellow] (4,0) rectangle (5,1);
\draw [] (4,1) rectangle (5,2);
\draw [fill=yellow] (6,1) rectangle (7,2);
\draw [dashed, color=gray] (-3,0) rectangle (8,1);
\draw [dashed, color=gray] (-3,1) rectangle (8,2);

\node[] at (0.5,1.5) {$a1$};
\node[] at (0.5,0.5) {$b_2 1$};
\node[] at (-1.5,0.5) {$b_1 1$};
\node[] at (4.5,0.5) {$d2$};
\node[] at (4.5,1.5) {$c_2 2$};
\node[] at (6.5,1.5) {$c_1 2$};

\draw[->] (-1.2,0.5) to [ bend right=45] (4.2,0.5);
\draw[->] (0.8,1.5) to [ bend left=45] (6.1,1.6);
\node[] at (-4,1) {$T^+=$};

\end{tikzpicture}\]  
We leave it to the reader to check that the new tableaux are semi-standard. 
\end{itemize}

This shows that the lemma holds when $c_1 \geq c_2$. We now consider the case when $c_2>c_1$. The configuration is
\[\begin{tikzpicture}[scale=0.6,every node/.style={scale=0.8}]
\draw [] (2,0) rectangle (3,1);
\draw [] (0,1) rectangle (1,2);
\draw [] (0,0) rectangle (1,1);
\draw [] (4,0) rectangle (5,1);
\draw [] (4,1) rectangle (5,2);
\draw [] (6,1) rectangle (7,2);

\draw [dashed, color=gray] (-3,0) rectangle (8,1);
\draw [dashed, color=gray] (-3,1) rectangle (8,2);

\node[] at (0.5,1.5) {$a1$};
\node[] at (0.5,0.5) {$c_1 1$};
\node[] at (2.5,0.5) {$c_2 1$};
\node[] at (4.5,1.5) {$b_1 2$};
\node[] at (4.5,0.5) {$d2$};
\node[] at (6.5,1.5) {$b_2 2$};
\node[] at (-4,1) {$T^-=$};

\draw[->] (0.8,1.5) to [ bend left=45] (6.1,1.6);
\draw[->] (2.8,0.5) to [] (4.1,0.5);

\end{tikzpicture}\]
\[\begin{tikzpicture}[scale=0.6,every node/.style={scale=0.8}]
\draw [] (0,1) rectangle (1,2);
\draw [] (-2,0) rectangle (-1,1);
\draw [] (0,0) rectangle (1,1);
\draw [] (4,0) rectangle (5,1);
\draw [] (4,1) rectangle (5,2);
\draw [] (2,1) rectangle (3,2);
\draw [dashed, color=gray] (-3,0) rectangle (8,1);
\draw [dashed, color=gray] (-3,1) rectangle (8,2);

\node[] at (0.5,1.5) {$a1$};
\node[] at (0.5,0.5) {$b_2 1$};
\node[] at (-1.5,0.5) {$b_1 1$};
\node[] at (4.5,0.5) {$d2$};
\node[] at (4.5,1.5) {$c_2 2$};
\node[] at (2.5,1.5) {$c_1 2$};

\draw[->] (-1.2,0.5) to [ bend right=45] (4.2,0.5);
\draw[->] (0.8,1.5) to  (2.2,1.5);
\node[] at (-4,1) {$T^+=$};

\end{tikzpicture}\]  
We consider the possible ordering of the pairs $(c_2,b_2)$ and $(c_1,b_1)$. If $b_1<c_1$ and $c_2<b_2$, then it is clear that we can remove $a1$ and $d2$, so the tableaux pair is not primitive. If only one of these inequalities hold, it is also easy to see that the tableaux pair splits: for example, if $b_1<c_1$ and $c_2 \geq b_2$, then we can remove the highlighted labels:
\[\begin{tikzpicture}[scale=0.6,every node/.style={scale=0.8}]
\draw [fill=yellow] (2,0) rectangle (3,1);
\draw [fill=yellow] (0,1) rectangle (1,2);
\draw [] (0,0) rectangle (1,1);
\draw [fill=yellow] (4,0) rectangle (5,1);
\draw [] (4,1) rectangle (5,2);
\draw [fill=yellow] (6,1) rectangle (7,2);

\draw [dashed, color=gray] (-3,0) rectangle (8,1);
\draw [dashed, color=gray] (-3,1) rectangle (8,2);

\node[] at (0.5,1.5) {$a1$};
\node[] at (0.5,0.5) {$c_1 1$};
\node[] at (2.5,0.5) {$c_2 1$};
\node[] at (4.5,1.5) {$b_1 2$};
\node[] at (4.5,0.5) {$d2$};
\node[] at (6.5,1.5) {$b_2 2$};
\node[] at (-4,1) {$T^-=$};

\draw[->] (0.8,1.5) to [ bend left=45] (6.1,1.6);
\draw[->] (2.8,0.5) to [] (4.1,0.5);

\end{tikzpicture}\]
\[\begin{tikzpicture}[scale=0.6,every node/.style={scale=0.8}]
\draw [fill=yellow] (0,1) rectangle (1,2);
\draw [] (-2,0) rectangle (-1,1);
\draw [fill=yellow] (0,0) rectangle (1,1);
\draw [fill=yellow] (4,0) rectangle (5,1);
\draw [fill=yellow] (4,1) rectangle (5,2);
\draw [] (2,1) rectangle (3,2);
\draw [dashed, color=gray] (-3,0) rectangle (8,1);
\draw [dashed, color=gray] (-3,1) rectangle (8,2);

\node[] at (0.5,1.5) {$a1$};
\node[] at (0.5,0.5) {$b_2 1$};
\node[] at (-1.5,0.5) {$b_1 1$};
\node[] at (4.5,0.5) {$d2$};
\node[] at (4.5,1.5) {$c_2 2$};
\node[] at (2.5,1.5) {$c_1 2$};

\draw[->] (-1.2,0.5) to [ bend right=45] (4.2,0.5);
\draw[->] (0.8,1.5) to  (2.2,1.5);
\node[] at (-4,1) {$T^+=$};

\end{tikzpicture}\]  
The strict inequality $b_1<c_1$ ensures that what remains in $T^-$ is semi-standard, and as $d>c_2 \geq b_2$, what is removed from $T^-$ is also semi-standard. The situation when $b_1 \geq c_1$ and $c_2<b_2$ is analogous. 

We now consider the situation when  $b_1 \geq c_1$ and $c_2 \geq b_2$. Since $c_1 < c_2$ and $b_2 \geq b_1$, we have the chain of inequalities:
\[d>c_2 \geq b_2 \geq b_1 \geq c_1 > a,\]
and at least one of the middle three inequalities must be strict. 

If $b_2>b_1$ or both $c_2>b_2$ and $b_1 >c_1$ then we can remove the six highlighted boxes below to obtain a splitting of $T^\pm.$ 
\[\begin{tikzpicture}[scale=0.6,every node/.style={scale=0.8}]
\draw [fill=yellow] (2,0) rectangle (3,1);
\draw [fill=yellow] (0,1) rectangle (1,2);
\draw [fill=yellow] (0,0) rectangle (1,1);
\draw [fill=yellow] (4,0) rectangle (5,1);
\draw [fill=yellow] (4,1) rectangle (5,2);
\draw [fill=yellow] (6,1) rectangle (7,2);

\draw [dashed, color=gray] (-3,0) rectangle (8,1);
\draw [dashed, color=gray] (-3,1) rectangle (8,2);

\node[] at (0.5,1.5) {$a1$};
\node[] at (0.5,0.5) {$c_1 1$};
\node[] at (2.5,0.5) {$c_2 1$};
\node[] at (4.5,1.5) {$b_1 2$};
\node[] at (4.5,0.5) {$d2$};
\node[] at (6.5,1.5) {$b_2 2$};
\node[] at (-4,1) {$T^-=$};

\draw[->] (0.8,1.5) to [ bend left=45] (6.1,1.6);
\draw[->] (2.8,0.5) to [] (4.1,0.5);

\end{tikzpicture}\]
\[\begin{tikzpicture}[scale=0.6,every node/.style={scale=0.8}]
\draw [fill=yellow] (0,1) rectangle (1,2);
\draw [fill=yellow] (-2,0) rectangle (-1,1);
\draw [fill=yellow] (0,0) rectangle (1,1);
\draw [fill=yellow] (4,0) rectangle (5,1);
\draw [fill=yellow] (4,1) rectangle (5,2);
\draw [fill=yellow] (2,1) rectangle (3,2);
\draw [dashed, color=gray] (-3,0) rectangle (8,1);
\draw [dashed, color=gray] (-3,1) rectangle (8,2);

\node[] at (0.5,1.5) {$a1$};
\node[] at (0.5,0.5) {$b_2 1$};
\node[] at (-1.5,0.5) {$b_1 1$};
\node[] at (4.5,0.5) {$d2$};
\node[] at (4.5,1.5) {$c_2 2$};
\node[] at (2.5,1.5) {$c_1 2$};

\draw[->] (-1.2,0.5) to [ bend right=45] (4.2,0.5);
\draw[->] (0.8,1.5) to  (2.2,1.5);
\node[] at (-4,1) {$T^+=$};

\end{tikzpicture}\]  
It remains to investigate what happens with $b_2=b_1$ and either $c_2=b_2$ or $b_1=c_1$. We check the case where $b_2=b_1=c_1<c_2$, and leave the second case to the reader, as it is symmetric. In this case, we need to consider the label below $b 2$ in $T^+$. It is either of the form $d_2 2$ or of the form $b_3 1$. We consider these separately:
\begin{itemize}
\item Suppose the label is of the form $d_2 2$. Then the configuration is below and the highlighted boxes induce a splitting. 
\[\begin{tikzpicture}[scale=0.6,every node/.style={scale=0.8}]
\draw [fill=yellow] (2,0) rectangle (3,1);
\draw [fill=yellow] (0,1) rectangle (1,2);
\draw [fill=yellow] (0,0) rectangle (1,1);
\draw [] (4,0) rectangle (5,1);
\draw [fill=yellow] (4,1) rectangle (5,2);
\draw [] (6,1) rectangle (7,2);

\draw [dashed, color=gray] (-3,0) rectangle (8,1);
\draw [dashed, color=gray] (-3,1) rectangle (8,2);

\node[] at (0.5,1.5) {$a1$};
\node[] at (0.5,0.5) {$b 1$};
\node[] at (2.5,0.5) {$d_2 2$};
\node[] at (4.5,1.5) {$b 2$};
\node[] at (4.5,0.5) {$d2$};
\node[] at (6.5,1.5) {$b 2$};
\node[] at (-4,1) {$T^-=$};

\draw[->] (0.8,1.5) to [ bend left=45] (6.1,1.6);
%\draw[->] (2.8,0.5) to [] (4.1,0.5);

\end{tikzpicture}\]
\[\begin{tikzpicture}[scale=0.6,every node/.style={scale=0.8}]
\draw [fill=yellow] (0,1) rectangle (1,2);
\draw [] (-2,0) rectangle (-1,1);
\draw [fill=yellow] (0,0) rectangle (1,1);
\draw [] (4,0) rectangle (5,1);
\draw [] (4,1) rectangle (5,2);
\draw [fill=yellow] (2,1) rectangle (3,2);
\draw [fill=yellow] (2,0) rectangle (3,1);
\draw [dashed, color=gray] (-3,0) rectangle (8,1);
\draw [dashed, color=gray] (-3,1) rectangle (8,2);

\node[] at (0.5,1.5) {$a1$};
\node[] at (0.5,0.5) {$b 1$};
\node[] at (-1.5,0.5) {$b 1$};
\node[] at (4.5,0.5) {$d2$};
\node[] at (4.5,1.5) {$c_2 2$};
\node[] at (2.5,1.5) {$b 2$};
\node[] at (2.5,0.5) {$d_2 2$};

\draw[->] (-1.2,0.5) to [ bend right=45] (4.2,0.5);
\draw[->] (0.8,1.5) to  (2.2,1.5);
\node[] at (-4,1) {$T^+=$};

\end{tikzpicture}\]  
\item Suppose the label is of the form $b_3 1$. If $b_3<d$, then the configuration is below and the highlighted boxes induce a splitting. 
\[\begin{tikzpicture}[scale=0.6,every node/.style={scale=0.8}]
\draw [fill=yellow] (2,0) rectangle (3,1);
\draw [fill=yellow] (0,1) rectangle (1,2);
\draw [fill=yellow] (0,0) rectangle (1,1);
\draw [fill=yellow] (4,0) rectangle (5,1);
\draw [] (4,1) rectangle (5,2);
\draw [fill=yellow] (6,1) rectangle (7,2);
\draw [fill=yellow] (8,1) rectangle (9,2);

\draw [dashed, color=gray] (-3,0) rectangle (10,1);
\draw [dashed, color=gray] (-3,1) rectangle (10,2);

\node[] at (0.5,1.5) {$a1$};
\node[] at (0.5,0.5) {$b 1$};
\node[] at (2.5,0.5) {$c_2 1$};
\node[] at (4.5,1.5) {$b 2$};
\node[] at (4.5,0.5) {$d2$};
\node[] at (6.5,1.5) {$b 2$};
\node[] at (8.5,1.5) {$b_3 2$};

\node[] at (-4,1) {$T^-=$};

\draw[->] (0.8,1.5) to [ bend left=45] (6.1,1.6);
%\draw[->] (2.8,0.5) to [] (4.1,0.5);

\end{tikzpicture}\]
\[\begin{tikzpicture}[scale=0.6,every node/.style={scale=0.8}]
\draw [fill=yellow] (0,1) rectangle (1,2);
\draw [] (-2,0) rectangle (-1,1);
\draw [fill=yellow] (0,0) rectangle (1,1);
\draw [fill=yellow] (4,0) rectangle (5,1);
\draw [fill=yellow] (4,1) rectangle (5,2);
\draw [fill=yellow] (2,1) rectangle (3,2);
\draw [fill=yellow] (2,0) rectangle (3,1);
\draw [dashed, color=gray] (-3,0) rectangle (8,1);
\draw [dashed, color=gray] (-3,1) rectangle (8,2);

\node[] at (0.5,1.5) {$a1$};
\node[] at (0.5,0.5) {$b 1$};
\node[] at (-1.5,0.5) {$b 1$};
\node[] at (4.5,0.5) {$d2$};
\node[] at (4.5,1.5) {$c_2 2$};
\node[] at (2.5,1.5) {$b 2$};
\node[] at (2.5,0.5) {$b_3 1$};

\draw[->] (-1.2,0.5) to [ bend right=45] (4.2,0.5);
\draw[->] (0.8,1.5) to  (2.2,1.5);
\node[] at (-4,1) {$T^+=$};

\end{tikzpicture}\]  
 If $b_3 \geq d$, then in fact $b_3=d>c_2$ and the highlighted boxes below induce a splitting.
 \[\begin{tikzpicture}[scale=0.6,every node/.style={scale=0.8}]
\draw [] (2,0) rectangle (3,1);
\draw [fill=yellow] (0,1) rectangle (1,2);
\draw [fill=yellow] (0,0) rectangle (1,1);
\draw [fill=yellow] (4,0) rectangle (5,1);
\draw [] (4,1) rectangle (5,2);
\draw [fill=yellow] (6,1) rectangle (7,2);
\draw [] (8,1) rectangle (9,2);

\draw [dashed, color=gray] (-3,0) rectangle (10,1);
\draw [dashed, color=gray] (-3,1) rectangle (10,2);

\node[] at (0.5,1.5) {$a1$};
\node[] at (0.5,0.5) {$b 1$};
\node[] at (2.5,0.5) {$c_2 1$};
\node[] at (4.5,1.5) {$b 2$};
\node[] at (4.5,0.5) {$d2$};
\node[] at (6.5,1.5) {$b 2$};
\node[] at (8.5,1.5) {$b_3 2$};

\node[] at (-4,1) {$T^-=$};

\draw[->] (0.8,1.5) to [ bend left=45] (6.1,1.6);
%\draw[->] (2.8,0.5) to [] (4.1,0.5);

\end{tikzpicture}\]
\[\begin{tikzpicture}[scale=0.6,every node/.style={scale=0.8}]
\draw [fill=yellow] (0,1) rectangle (1,2);
\draw [] (-2,0) rectangle (-1,1);
\draw [fill=yellow] (0,0) rectangle (1,1);
\draw [fill=yellow] (4,0) rectangle (5,1);
\draw [] (4,1) rectangle (5,2);
\draw [fill=yellow] (2,1) rectangle (3,2);
\draw [] (2,0) rectangle (3,1);
\draw [dashed, color=gray] (-3,0) rectangle (8,1);
\draw [dashed, color=gray] (-3,1) rectangle (8,2);

\node[] at (0.5,1.5) {$a1$};
\node[] at (0.5,0.5) {$b 1$};
\node[] at (-1.5,0.5) {$b 1$};
\node[] at (4.5,0.5) {$d2$};
\node[] at (4.5,1.5) {$c_2 2$};
\node[] at (2.5,1.5) {$b 2$};
\node[] at (2.5,0.5) {$b_3 1$};

\draw[->] (-1.2,0.5) to [ bend right=45] (4.2,0.5);
\draw[->] (0.8,1.5) to  (2.2,1.5);
\node[] at (-4,1) {$T^+=$};

\end{tikzpicture}\]  
\end{itemize}

\end{proof}
There is a version for $d$ of Proposition \ref{prop:cases}:
\begin{prop}\label{prop:cases2} Let $T^\pm$ be a primitive, semi-standard linked tableaux pair. Then neither $T^+$ nor $T^-$ contain a configuration of the following form:
\[\begin{tikzpicture}[scale=0.6,every node/.style={scale=0.8}]
\draw [] (0,0) rectangle (1,1);
\draw [] (-4,1) rectangle (-3,2);
\draw [] (2,1) rectangle (3,2);
\draw [thick,double] (0,0)--(0,2);
\draw [dashed, color=gray] (-5,0) rectangle (6,1);
\draw [dashed, color=gray] (-5,1) rectangle (6,2);

\node[] at (0.5,0.5) {$d2$};
\node[] at (-3.5,1.5) {$a 1$};
\node[] at (2.5,1.5) {$b 2$};
\draw[->] (-3.2,1.5) -- (2.2,1.5);
\end{tikzpicture}\] 
We assume that $a 1$ is to the left of, and $b2$ to the right of, the vertical double line. 
\end{prop}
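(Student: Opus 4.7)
The plan is to mimic the proof of Proposition \ref{prop:cases} with the roles of rows $1$ and $2$ (equivalently, the roles of $T^+$ and $T^-$ via the link) interchanged. Without loss of generality I will assume the forbidden configuration occurs in $T^+$. The configuration of Proposition \ref{prop:cases2} is the row-reflection of that in Proposition \ref{prop:cases}: the arrow now lies in row $1$ and the isolated label $d 2$ sits on row $2$, rather than the arrow lying in row $2$ with $a 1$ isolated on row $1$.

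First I will give names to the cells neighbouring $a 1$, $b 2$, and $d 2$ in $T^+$, in analogy with the labels $b_1 1, b_2 1, c_2 2$ introduced in the proof of Proposition \ref{prop:cases}: namely the label directly above $d 2$ in $T^+$ (in row $1$) and the labels directly below $a 1$ and $b 2$ in $T^+$. Semi-standardness of $T^+$, together with Lemma \ref{lem:nobarr}, constrains the possible second digits of these labels. I will then translate the configuration through the link, which sends $ij$ in row $k$ of $T^+$ to $ik$ in row $j$ of $T^-$. Under this, $a 1$ and $d 2$ persist in their respective rows in $T^-$, while $b 2 \in T^+$ (row $1$) becomes $b 1 \in T^-$ (row $2$); by Lemma \ref{lem:lem2}, $a 1$ is strictly to the left of $d 2$ in $T^-$ as well.

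Next I will split into two principal cases mirroring the $c_2 \leq c_1$ versus $c_2 > c_1$ split of Proposition \ref{prop:cases}, comparing the label above $d 2$ in $T^+$ with its counterpart below $a 1$ in $T^-$. Within each principal case I will further subdivide according to whether the inequalities between the newly introduced labels and $a, b, d$ are strict or weak, again paralleling the three subcases in the proof of Proposition \ref{prop:cases}. In each subcase the goal is to exhibit a concrete subset of boxes that can be peeled off from both $T^+$ and $T^-$ so as to produce two semi-standard linked tableau pairs whose leading monomials multiply to $\LM(f_{T^\pm})$, contradicting primitivity of $T^\pm$. For most subcases removing the $a 1$--$d 2$ column pair together with at most the neighbouring columns will suffice; the required inequalities of the subcase ensure both the removed block and the remaining tableaux are semi-standard.

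The main obstacle, as in Proposition \ref{prop:cases}, will be the boundary subcases where several inequalities simultaneously become equalities. In these degenerate situations I will need to descend one further label (the analogue of the label $b_3$ introduced in the final case of Proposition \ref{prop:cases}) and split off a slightly larger block whose removal still preserves semi-standardness on both sides. Each individual verification amounts to checking a short chain of strict/weak inequalities along the boundary of the removed block, but the case enumeration will be of comparable length to that of Proposition \ref{prop:cases}.
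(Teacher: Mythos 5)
Your overall strategy — a mirrored case analysis — would work, but as written the proposal is a skeleton rather than a proof: in Proposition \ref{prop:cases} the entire mathematical content lives in the subcase-by-subcase choice of which boxes to remove and the verification that both the removed block and the remainder stay semi-standard, and your proposal defers every one of those verifications ("the required inequalities of the subcase ensure..."). Until those checks are written out, nothing has actually been established. The paper avoids all of this: its proof of Proposition \ref{prop:cases2} is a one-line reduction to Proposition \ref{prop:cases} by symmetry, and this is the route you should look for first.

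One caution about how that symmetry must be set up, since you describe the configuration as "the row-reflection" of the earlier one. A naive swap of rows $1$ and $2$ (together with swapping second digits) does \emph{not} preserve semi-standardness — it reverses the column inequalities. The correct involution rotates each of $T^+$ and $T^-$ by $180^\circ$ (reversing both the row order and the column order) and simultaneously reverses the order on the arrow alphabet, $i \mapsto K+1-i$, together with the induced swap of second digits. One checks that this operation sends semi-standard linked pairs to semi-standard linked pairs, is compatible with the link, and carries splittings to splittings, hence preserves primitivity; and it carries the forbidden configuration of Proposition \ref{prop:cases2} (a first-row arrow $a1 \to b2$ spanning a second-row cell $d2$) exactly onto the forbidden configuration of Proposition \ref{prop:cases}. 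With that observation the result is immediate, and no new case analysis is needed. If you do insist on redoing the cases by hand, you must also re-derive the analogues of the constraints supplied by Lemma \ref{lem:nobarr} and Lemma \ref{lem:lem2} in the reflected picture before the inequalities you plan to invoke are available.
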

\begin{proof}
Since the tableaux set up is symmetric across the first and second rows, this follows from Proposition \ref{prop:cases}.
\end{proof}

In preparation of the proof of finiteness, we introduce some notation. Let $T^\pm$ be a primitive, semi-standard linked pair. We label the entries of the first row of $T^-$ with second digit $1$,
\[a_1 1 \leq \cdots \leq a_k 1,\]
and the entries of the first row with second digit $2$
\[b_1 2 \leq \cdots \leq b_l 2.\]
For the second row, we label the entries
\[c_1 1 \leq \cdots \leq c_l 1,\]
and 
\[d_1 2 \leq \cdots \leq d_k 2.\]
It is not hard to see that the number of $a$s equal the number of $d$s, and the number of $b$s equal the number of $c$s, so the indexing above is justified. Because there are no backwards arrows by Lemma \ref{lem:nobarr}, as we move through the columns of $T^\pm$ from left to right, we have always passed strictly more $a$ labels than $d$ labels, until we reach the last column. Now consider the configuration near some $a_j$. Define 
\begin{itemize}
\item $s$ to be the number of $d$s  to the left of $a_j$ in $T^+$ and  hence also in $T^-$, by Lemma \ref{lem:lem2};
\item $t=j-s > 0$, by the above observation;
\item $y$ to be the number of $b$s  to the left of $a_j$ in $T^-$;
\item $x$ to be the number of $c$s to the left of $a_j$ in $T^+$. 
\end{itemize} 
Then near $a_j$ the tableaux are
\[\begin{tikzpicture}[scale=1,every node/.style={scale=0.8}]
\draw [] (2,1) rectangle (3,2);
\draw [] (0,1) rectangle (1,2);
\draw [] (0,0) rectangle (1,1);
\draw [] (4,1) rectangle (5,2);

\draw [dashed, color=gray] (-1,0) rectangle (6,1);
\draw [dashed, color=gray] (-1,1) rectangle (6,2);

\node[] at (0.5,1.5) {$a_j1$};
\node[] at (0.5,0.5) {$c_{y+t}1$};
\node[] at (2.5,1.5) {$b_{x+t} 2$};
\node[] at (4.5,1.5) {$b_{z_-} 2$};
\node[] at (-2,1) {$T^-=$};

\draw[->] (0.8,0.8) to [] (4.2,1.3);
\draw[->] (0.8,1.5) to [] (2,1.5);

\end{tikzpicture}\]

\[\begin{tikzpicture}[scale=1,every node/.style={scale=0.8}]
\draw [] (2,1) rectangle (3,2);
\draw [] (0,1) rectangle (1,2);
\draw [] (0,0) rectangle (1,1);
\draw [] (4,1) rectangle (5,2);

\draw [dashed, color=gray] (-1,0) rectangle (6,1);
\draw [dashed, color=gray] (-1,1) rectangle (6,2);

\node[] at (0.5,1.5) {$a_j1$};
\node[] at (0.5,0.5) {$b_{x+t}1$};
\node[] at (2.5,1.5) {$c_{y+t} 2$};
\node[] at (4.5,1.5) {$c_{z_+}2$};
\node[] at (-2,1) {$T^+=$};

\draw[->] (0.8,0.8) to [] (4.2,1.3);
\draw[->] (0.8,1.5) to [] (2,1.5);

\end{tikzpicture}\]
The labels have been filled in using the following observations:
\begin{itemize}
\item The label $a_j 1$ is the $(j+y)^{th}$ box in the first row of $T^-$. Since there are $s$ $d$s to the left of $a_j 1$, the label $b_p 1$ below $a_j 1$ in $T^-$ must have $p=j+y-s=y+t$. Similar reasoning gives that the label below $a_j 1$ in $T^+$ is $c_{x+t} 1$.
\item  It follows from Proposition \ref{prop:cases} that the target of the arrow in the configuration above in $T^+$ with source $b_{x+t} 1$ is on the first row. The analogous statement holds for $c_{y+t} 2$ in $T^-$. 

\end{itemize}
\begin{lem}\label{lem:abs} Using the above notation,
\[ |x-y| < t.\]
\end{lem}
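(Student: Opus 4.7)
The plan is to prove $x-y<t$ and $y-x<t$ separately, each by applying the no-backward-arrow condition of Lemma~\ref{lem:nobarr} to one of the two arrows emanating from $a_j$.

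For the first inequality, I would use the fact (already recorded in the setup preceding the lemma) that the column of $T^-$ at position $j+y$ has $a_j 1$ above $c_{y+t}1$. Under the link, this column produces an arrow in $T^+$ from $a_j 1$ (at column $j+x$ of row~1) to $c_{y+t}2$ (also in row~1 of $T^+$). Lemma~\ref{lem:nobarr} forbids a backward arrow; since source and target lie in the same row but in distinct columns, $c_{y+t}2$ must be strictly to the right of $a_j 1$ in row~1 of $T^+$. To turn this into an index inequality, I would observe that a label $c_p 2$ lies to the left of $a_j 1$ in row~1 of $T^+$ iff $c_p 2<a_j 1$, which, since these labels differ in second digit, holds iff $c_p<a_j$ strictly. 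Because the $c_p$ are indexed in weakly increasing order, the $c$'s strictly left of $a_j$ in $T^+$ are precisely $c_1,\dots,c_x$, so ``$c_{y+t}$ strictly right of $a_j$'' forces $y+t\ge x+1$, i.e.\ $x-y<t$.

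The second inequality follows by the symmetric argument with $T^+$ and $T^-$ swapped. The column of $T^+$ at position $j+x$ has $a_j 1$ above $b_{x+t}1$, producing an arrow in $T^-$ from $a_j 1$ to $b_{x+t}2$ in row~1 of $T^-$; the no-backward condition, together with the analogous identification of $b_1,\dots,b_y$ as those $b_p$ satisfying $b_p<a_j$, yields $y-x<t$. Combining the two bounds gives $|x-y|<t$. The only piece of bookkeeping that needs care is the comparison between positional order (``left of $a_j$'') and the first-digit order on the $b_p$'s and $c_p$'s; beyond this, no further input is needed, and in particular Propositions~\ref{prop:cases}--\ref{prop:cases2} are not invoked for this lemma.
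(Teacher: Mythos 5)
Your proposal is correct and follows essentially the same route as the paper: both inequalities come from applying the no-backward-arrow condition of Lemma~\ref{lem:nobarr} to the arrow from $a_j1$ to $b_{x+t}2$ in $T^-$ and to the arrow from $a_j1$ to $c_{y+t}2$ in $T^+$, then comparing with the definitions of $x$ and $y$ as counts of labels to the left of $a_j$. The extra bookkeeping you flag (positional order versus index order) is handled in the paper simply by the definitions of $x$ and $y$ together with the semi-standard link convention, so no further argument is needed.
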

\begin{proof}
By definition $b_y 2$ is strictly the left of $a_j 1$ in $T^-$. Since there are no backwards arrows, and $a_j 1$ maps to $b_{x+t}2$, it follows that 
\[x+t>y.\]
Looking at $T^+$, we similarly obtain that $c_{y+t}2$ is strictly to the right of $c_x 2$, and so
\[y+t>x.\]
This proves the claim. 
\end{proof}

We now define an \emph{initial set} for each $a_j$. Assume $x \leq y$. If $y<x$, then the definition is completely analogous by symmetry. 
\begin{mydef} Using the notation above, we define the \emph{initial set} of $a_j$ to be
\begin{itemize}
\item $I^a_j:=\{a_j,c_{x+t},b_{x+t}\}$ if $a_j<c_{x+t},$
\item $I^a_j:=\{a_j,c_{y+t},b_{y+t}\}$ if $a_j\geq c_{x+t}.$
\end{itemize}
\end{mydef}
The initial set is defined so that it begins the description of a sub-tableaux pair of $T^\pm$ that induces a splitting.
\begin{lem}\label{lem:splitting} Near $a_j$, the initial set defines the part of sub-tableaux pair that induces a splitting of $T^\pm$.
\end{lem}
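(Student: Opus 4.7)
The plan is to construct, starting from the initial set $I^a_j$, a rectangular sub-tableaux pair $T_1^\pm$ of $T^\pm$ whose complement $T_2^\pm$ is also a semi-standard linked pair; together these induce a splitting in the sense of Definition \ref{defn:prim}, since partitioning the labels partitions the monomial $\Mon(\underline{T}^\pm)$ and hence factors the leading monomial $\LM(f_{T^\pm})$.

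First, I would close up the initial set into a rectangular sub-pair by counting cells. In the case $a_j < c_{x+t}$, the three labels $\{a_j,\, c_{x+t},\, b_{x+t}\}$ contribute two cells to row $1$ and one cell to row $2$ of each of $T^-$ and $T^+$. To obtain a $2\times 2$ sub-pair, we must adjoin one additional label in row $2$ of both tableaux. Since the only labels appearing in row $2$ of both $T^-$ and $T^+$ are the $d$-labels, this forces the fourth label to be a specific $d_{s+1}$, uniquely determined by the column alignment relative to the labels of the initial set. The analogous augmentation with $\{a_j,\, c_{y+t},\, b_{y+t}\}$ plus the appropriate $d$-label handles the case $a_j \geq c_{x+t}$.

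Second, I would verify that the sub-pair $T_1^\pm$ so constructed is semi-standard. The crucial strict column inequality $a_j < c_{x+t}$ is precisely the case hypothesis (and the analogue holds in the second case), while the remaining row and column inequalities follow from the ambient semi-standardness of $T^\pm$ combined with the column-position of the adjoined $d$-label.

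The main obstacle, and where the bulk of the work lies, is the third step: showing that the complement $T_2^\pm$ remains semi-standard after removal of the $2\times 2$ block. Removing these cells induces a lateral shift in the neighbouring columns, which could a priori produce new inversions between previously non-adjacent labels. Two ingredients control this. Lemma \ref{lem:abs} (the bound $|x-y|<t$) confines the cells of the initial set to a narrow window and so bounds the magnitude of the shift. Propositions \ref{prop:cases} and \ref{prop:cases2}, together with Lemma \ref{lem:nobarr}, then exclude precisely the ``long arrow'' configurations whose presence would cause a post-removal inversion: any such candidate violation in $T_2^\pm$ corresponds either to a backwards or downwards arrow or to one of the forbidden configurations of Propositions \ref{prop:cases}--\ref{prop:cases2}. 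Verifying case-by-case that every potential inversion in $T_2^\pm$ is ruled out by one of these structural constraints completes the proof.
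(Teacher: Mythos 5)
Your first step contains a structural error that undermines the construction. You claim that the initial set $\{a_j, c_{x+t}, b_{x+t}\}$, having two cells in row $1$ and one cell in row $2$ of each tableau, must be completed to a $2\times 2$ sub-pair by adjoining a single $d$-label. But a splitting sub-pair only needs to be a $2\times m$ rectangle for \emph{some} $m$, and the cell-count constraint merely forces the added labels to contain equal numbers of $b$'s and $c$'s and exactly one more $d$ than $a$ — it does not force $m=2$. In the actual structure of a primitive pair, the sub-tableau beginning at $a_j$'s initial set continues by adjoining pairs $\{c_{x+\alpha t}, b_{x+\alpha t}\}$ at column intervals of $t$ (adjusted to $t-1$, $t-2,\dots$ as successive $d$'s are passed) and only terminates, many columns later, with the initial set of some $d$; this iterative completion is precisely the content of the proof of Theorem \ref{thm:2thm}, not of this lemma. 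Your assertion that ``the column alignment'' forces the fourth label to be $d_{s+1}$ is false: the next column of the sub-tableau generically contains $b_{x+2t}$ and $c_{x+2t}$, not a $d$.

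The lemma itself only asks you to verify that the initial set is the correct local beginning of such a sub-pair, i.e., that near $a_j$ both the extracted cells and the complement are semi-standard. In the case $a_j < c_{x+t}$ this follows from $x+t>y$ (Lemma \ref{lem:abs}), which places $b_{x+t}2$ strictly to the right of $a_j 1$ in $T^-$. The case $a_j \ge c_{x+t}$ is the delicate one and your proposal does not engage with it: one must first deduce $c_{x+t}=a_j$, then apply Proposition \ref{prop:cases} twice to show that $c_{x+t}2$ sits above a label $b_p 1$ with no intervening $d$'s, giving $p \ge x+2t > y+t$ by Lemma \ref{lem:abs}, which is what places $c_{x+t}2$ strictly to the right of $b_{y+t}$ in $T^+$ and makes the extraction legal; the complement is then semi-standard because $c_{x+t}=a_j<b_{x+t}$. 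Your third step correctly identifies that Lemma \ref{lem:abs} and the no-backwards/no-downwards-arrow results control the post-removal shifts, but without the correct shape of the extracted sub-tableau the case analysis you propose cannot be carried out.
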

\begin{proof}
Consider the first case, when $a_j<c_{x+t}$. Note by Lemma \ref{lem:abs}, $x+t>y$, and therefore $b_{x+t} 2$ is to the right of $a_j 1$ in $T^-$.  Then if we highlight the initial set, plus extra tableaux to indicate the pattern, both what remains and what is removed is semi-standard:
\[\begin{tikzpicture}[scale=1,every node/.style={scale=0.8}]
\draw [fill=yellow] (-2,0) rectangle (-1,1);
\draw [fill=yellow] (2,1) rectangle (3,2);
\draw [fill=yellow] (2,0) rectangle (3,1);
\draw [fill=yellow] (0,1) rectangle (1,2);
\draw [] (0,0) rectangle (1,1);
\draw [] (4,1) rectangle (5,2);

\draw [dashed, color=gray] (-3,0) rectangle (6,1);
\draw [dashed, color=gray] (-3,1) rectangle (6,2);

\node[] at (0.5,1.5) {$a_j1$};
\node[] at (0.5,0.5) {$c_{y+t}1$};
\node[] at (-1.5,0.5) {$c_{x+t}1$};

\node[] at (2.5,1.5) {$b_{x+t} 2$};
\node[] at (4.5,1.5) {$b_{z_-} 2$};
\node[] at (-4,1) {$T^-=$};

%\draw[->] (0.8,0.8) to [] (4.2,1.3);
%\draw[->] (0.8,1.5) to [] (2,1.5);

\end{tikzpicture}\]

\[\begin{tikzpicture}[scale=1,every node/.style={scale=0.8}]
\draw [fill=yellow] (2,1) rectangle (3,2);
\draw [] (4,1) rectangle (5,2);
\draw [fill=yellow] (0,1) rectangle (1,2);
\draw [fill=yellow] (0,0) rectangle (1,1);
\draw [fill=yellow] (2,0) rectangle (3,1);
\draw [] (6,1) rectangle (7,2);

\draw [dashed, color=gray] (-1,0) rectangle (8,1);
\draw [dashed, color=gray] (-1,1) rectangle (8,2);

\node[] at (0.5,1.5) {$a_j1$};
\node[] at (0.5,0.5) {$b_{x+t}1$};
\node[] at (2.5,1.5) {$c_{x+t} 2$};
\node[] at (4.5,1.5) {$c_{y+t} 2$};
\node[] at (6.5,1.5) {$c_{z_+}2$};
\node[] at (-2,1) {$T^+=$};

%\draw[->] (0.8,0.8) to [] (4.2,1.3);
%\draw[->] (0.8,1.5) to [] (2,1.5);

\end{tikzpicture}\]

Consider the second case, when $a_j \geq c_{x+t}$. Since by assumption, $c_{x+t} \leq a_j$, in fact $c_{x+t}=a_j$. Applying Proposition \ref{prop:cases} twice, we obtain that $c_{x+t} 2$ is above a label of the form $b_p 1$ and that there are no $d$ type labels between $b_{x+t}1$ and $b_p1$ in $T^+$. Therefore 
\[p \geq x+2t=(x+t)+t>y+t,\]
 applying Lemma \ref{lem:abs}. We conclude that in $T^+$, $c_{x+t}$ is strictly to the right of $b_{y+t}$.  Again, if we highlight the initial set, plus extra tableaux to indicate the pattern, both what remains and what is removed is semi-standard. What remains is semi-standard because of the inequality:
 \[c_{x+t}=a_j<b_{x+t}.\]
 We illustrate this below:
\[\begin{tikzpicture}[scale=1,every node/.style={scale=0.8}]
\draw [] (-2,0) rectangle (-1,1);
\draw [fill=yellow] (2,1) rectangle (3,2);
\draw [fill=yellow] (2,0) rectangle (3,1);
\draw [fill=yellow] (0,1) rectangle (1,2);
\draw [fill=yellow] (0,0) rectangle (1,1);
\draw [] (4,1) rectangle (5,2);

\draw [dashed, color=gray] (-3,0) rectangle (6,1);
\draw [dashed, color=gray] (-3,1) rectangle (6,2);

\node[] at (0.5,1.5) {$a_j1$};
\node[] at (0.5,0.5) {$c_{y+t}1$};
\node[] at (-1.5,0.5) {$c_{x+t}1$};

\node[] at (2.5,1.5) {$b_{y+t} 2$};
\node[] at (4.5,1.5) {$b_{z_-} 2$};
\node[] at (-4,1) {$T^-=$};

%\draw[->] (0.8,0.8) to [] (4.2,1.3);
%\draw[->] (0.8,1.5) to [] (2,1.5);

\end{tikzpicture}\]

\[\begin{tikzpicture}[scale=1,every node/.style={scale=0.8}]
\draw [fill=yellow] (2,0) rectangle (3,1);
\draw [fill=yellow] (6,1) rectangle (7,2);
\draw [fill=yellow] (0,1) rectangle (1,2);
\draw [] (0,0) rectangle (1,1);
\draw [fill=yellow] (6,0) rectangle (7,1);
\draw [] (4,1) rectangle (5,2);
\draw [] (6,1) rectangle (7,2);
\draw [] (8,1) rectangle (9,2);

\draw [dashed, color=gray] (-1,0) rectangle (10,1);
\draw [dashed, color=gray] (-1,1) rectangle (10,2);

\node[] at (0.5,1.5) {$a_j1$};
\node[] at (0.5,0.5) {$b_{x+t}1$};
\node[] at (2.5,0.5) {$b_{y+t} 2$};
\node[] at (4.5,1.5) {$c_{x+t} 2$};
\node[] at (6.5,1.5) {$c_{y+t} 2$};
\node[] at (8.5,1.5) {$c_{z_+}2$};
\node[] at (-2,1) {$T^+=$};

%\draw[->] (0.8,0.8) to [] (4.2,1.3);
%\draw[->] (0.8,1.5) to [] (2,1.5);
\end{tikzpicture}\]
\end{proof}

We define the notion of an initial set for each $d_{j}$ -- it is the analogue of the definition for the $a_j$. Set $\hat{j}=k-j+1$, so that $d_j$ is the $\hat{j}^{th}$ $d$ from the right. Define 
\begin{itemize}
\item $s$ to be the number of $a$s  to the right of $d_j$ in $T^+$ and  hence also in $T^-$, by Lemma \ref{lem:lem2};
\item $t=\hat{j}-s> 0$,
\item $y$ to be the number of $b$s  to the right of $d_{j}$ in $T^+$, and $\hat{y}=:l-y+1$ (recall that $l$ is the total number of $b$s, which is equal to the total number of $c$s),
\item $x$ to be the number of $c$s to the right of $d_l$ in $T^-$, and $\hat{x}:=l-x+1$.
\end{itemize} 
Then near $d_l$ the tableaux look like
\[\begin{tikzpicture}[scale=1,every node/.style={scale=0.8}]
\draw [] (2,0) rectangle (3,1);
\draw [] (0,0) rectangle (1,1);
\draw [] (4,1) rectangle (5,2);
\draw [] (4,0) rectangle (5,1);

\draw [dashed, color=gray] (-1,0) rectangle (6,1);
\draw [dashed, color=gray] (-1,1) rectangle (6,2);

\node[] at (4.5,0.5) {$d_l 2$};
\node[] at (4.5,1.5) {$b_{\hat{y}-t}2$};
\node[] at (2.5,0.5) {$c_{\hat{x}-t} 1$};
\node[] at (0.5,0.5) {$c_{z_-} 2$};
\node[] at (-2,1) {$T^-=$};

\draw[->] (2.8,0.5) to [] (4.2,0.5);
\draw[->] (0.8,0.8) to [] (4.1,1.3);

\end{tikzpicture}\]
\[\begin{tikzpicture}[scale=1,every node/.style={scale=0.8}]
\draw [] (2,0) rectangle (3,1);
\draw [] (0,0) rectangle (1,1);
\draw [] (4,1) rectangle (5,2);
\draw [] (4,0) rectangle (5,1);

\draw [dashed, color=gray] (-1,0) rectangle (6,1);
\draw [dashed, color=gray] (-1,1) rectangle (6,2);

\node[] at (4.5,0.5) {$d_l 2$};
\node[] at (4.5,1.5) {$c_{\hat{x}-t}2$};
\node[] at (2.5,0.5) {$b_{\hat{y}-t} 1$};
\node[] at (0.5,0.5) {$b_{z_+} 1$};
\node[] at (-2,1) {$T^+=$};

\draw[->] (2.8,0.5) to [] (4.2,0.5);
\draw[->] (0.8,0.8) to [] (4.1,1.3);

\end{tikzpicture}\]
Again, it holds that $|x-y|=|\hat{x}-\hat{y}|<t.$

We now define an \emph{initial set} for each $d_j$. Assume $x \leq y$. If $y<x$, then the definition is completely analogous using symmetry between $T^+$ and $T^-$.  
\begin{mydef} Using the notation above, we define the \emph{initial set} of $d_j$ to be
\begin{itemize}
\item $I^d_j:=\{d_j,c_{x-t},b_{x-t}\}$ if $d_j \leq c_{y-t},$
\item $I^d_j:=\{d_j,c_{y-t},b_{y-t}\}$ if $d_j > c_{y-t}.$
\end{itemize}
\end{mydef}
As in the $a_j$ case, one can check that the initial sets define the first few boxes of sub-tableaux that give a semi-standard splitting of $T^\pm$: that is, Lemma \ref{lem:splitting} holds for $I^d_j$ as well. 

Before we state and prove the main theorem of this section, we illustrate the core idea of the theorem as well as the role of initial sets in the example below.
\begin{eg}\label{eg:thm} Consider a semi-standard linked pair where labels $a_i,b_i,c_i,d_i$ are such that the tableaux are semi-standard. Suppose that $a_4<b_4$, $c_7<d_1$, and $c_{12} \geq d_2$ -- this determines the initial sets. As described in Theorem \ref{thm:2thm}, one can find a splitting of this tableaux pair by starting with the initial set of the last $a$. In this case, this initial set is $\{a_4,c_4, b_4\}$. Add to this set labels $c_{4+4\alpha}, b_{4+4\alpha}$, as long as no $d$s are passed; in case we add only $b_8$ and $c_8$. When $d_1$ is passed, we start adding multiples of $3$ as long as we have not passed $d_2$: our set is now
\[\{a_4,c_4,b_4,c_8,b_8,c_{11},b_{11}\}.\]
As we pass $d_2$, we start adding multiples of $2$, until we reach $d_3$ and $d_4$ (simultaneously in this case). We add $d_4$ to complete the last column, so the labels are:
\[\{a_4,c_4,b_4,c_8,b_8,c_{11},b_{11},c_{13},b_{13},c_{15},b_{15},d_4\}.\]
We highlight these labels in yellow below. 
\[\begin{tikzpicture}[scale=0.8,every node/.style={scale=0.7}]
 \foreach \x in {0,...,18}
 \draw [] (\x,0) rectangle (\x+1,1);
 \foreach \x in {0,...,18}
 \draw [] (\x,1) rectangle (\x+1,2);
 
  \foreach \x in {3,7,11,14,16,18}
 \draw [fill=yellow] (\x,1) rectangle (\x+1,2);
 \foreach \x in {4,8,12,14,17,19}
 \draw [fill=yellow] (\x-1,0) rectangle (\x,1);
 \draw [fill=orange] (13,1) rectangle (14,2);
\draw [fill=orange] (15,0) rectangle (16,1);
 
\foreach \x in {1,...,10}
\node[] at (\x-1/2,0.5) {$c_{\x}1$};

\node[] at (10.5,0.5) {$d_1 2$};
\node[] at (11.5,0.5) {$c_{11} 1$};
\node[] at (12.5,0.5) {$c_{12} 1$};
\node[] at (13.5,0.5) {$c_{13} 1$};
\node[] at (14.5,0.5) {$c_{14} 1$};
\node[] at (15.5,0.5) {$d_2 2$};
\node[] at (16.5,0.5) {$c_{15} 1$};
\node[] at (17.5,0.5) {$d_3 2$};
\node[] at (18.5,0.5) {$d_4 2$};

\node[] at (0.5,1.5) {$a_11$};
\node[] at (1.5,1.5) {$a_21$};
\node[] at (2.5,1.5) {$a_31$};
\node[] at (4.5,1.5) {$b_1 2$};
\node[] at (3.5,1.5) {$a_41$};
\foreach \x in {2,...,15}
\node[] at (\x+3.5,1.5) {$b_{\x}2$};

\node[] at (-1,1) {$T^-=$};

\end{tikzpicture}\]
\[\begin{tikzpicture}[scale=0.8,every node/.style={scale=0.7}]
 \foreach \x in {0,...,18}
 \draw [] (\x,0) rectangle (\x+1,1);
 \foreach \x in {0,...,18}
 \draw [] (\x,1) rectangle (\x+1,2);

 \foreach \x in {6,8,12,15,17,19}
 \draw [fill=yellow] (\x-1,1) rectangle (\x,2);
 \foreach \x in {4,8,12,15,17,19}
 \draw [fill=yellow] (\x-1,0) rectangle (\x,1);

\draw [fill=orange] (13,1) rectangle (14,2);
\draw [fill=orange] (13,0) rectangle (14,1);

\foreach \x in {1,...,8}
\node[] at (\x-1/2,0.5) {$b_{\x}1$};

\node[] at (8.5,0.5) {$d_1 2$};
\foreach \x in {9,...,12}
\node[] at (\x+0.5,0.5) {$b_{\x}1$};
\node[] at (13.5,0.5) {$d_2 2$};
\foreach \x in {13,...,15}
\node[] at (\x+1.5,0.5) {$b_{\x}1$};
\node[] at (17.5,0.5) {$d_3 2$};
\node[] at (18.5,0.5) {$d_4 2$};

\node[] at (0.5,1.5) {$a_11$};
\node[] at (1.5,1.5) {$c_1 2$};
\node[] at (2.5,1.5) {$a_21$};
\node[] at (3.5,1.5) {$a_31$};
\node[] at (4.5,1.5) {$c_2 2$};
\node[] at (5.5,1.5) {$a_41$};

\foreach \x in {3,...,15}
\node[] at (\x+3.5,1.5) {$c_{\x}2$};

\node[] at (-1,1) {$T^+=$};

\end{tikzpicture}\]
Whenever we have highlighted entire columns, there is nothing to check as far as semi-standardness goes. In $T^+$, there is one location where we do not have an entire column: at the beginning, where Lemma \ref{lem:splitting} ensures that semi-standardness is preserved. In $T^-$, there is the pair $b_{11} 2$ and $c_{13} 1$ to consider. Note that these labels fall between the elements of the initial set of $d_2$, highlighted in orange. In fact, Lemma \ref{lem:splitting} (or rather its analogue for $d$s) once again ensures that the tableau that is removed is semi-standard. We check this explicitly: by assumption, $c_{12} \geq d_2$ and $c_{12} \leq d_2$, so $c_{12}=d_2$, hence
\[b_{11} \leq b_{12}<d_2=c_{12} \leq c_{13}.\]
\end{eg}

\begin{thm}\label{thm:2thm} The semi-invariant ring of the Kronecker quiver with dimension vector $(2,2)$ and $K$ arrows has a finite SAGBI basis indexed by linked pairs of semi-standard tableaux of the form:
\[\begin{tikzpicture}[scale=1,every node/.style={scale=0.8}]
\draw [] (0,1) rectangle (1,2);
\draw [] (0,0) rectangle (1,1);
\draw [] (1,1) rectangle (2,2);
\draw [] (1,0) rectangle (2,1);
\draw [] (2,1) rectangle (3,2);
\draw [] (2,0) rectangle (3,1);
\draw [] (3,1) rectangle (4,2);
\draw [] (3,0) rectangle (4,1);

\draw [dashed, color=gray] (3,0) rectangle (8,1);
\draw [dashed, color=gray] (3,1) rectangle (8,2);
\draw [] (10,1) rectangle (11,2);
\draw [] (10,0) rectangle (11,1);
\draw [] (7,1) rectangle (8,2);
\draw [] (7,0) rectangle (8,1);
\draw [] (8,1) rectangle (9,2);
\draw [] (8,0) rectangle (9,1);
\draw [] (9,1) rectangle (10,2);
\draw [] (9,0) rectangle (10,1);

\node[] at (0.5,1.5) {$a 1$};
\node[] at (0.5,0.5){$c_1 1$};
\node[] at (1.5,1.5) {$b_1 2$};
\node[] at (1.5,0.5){$c_2 1$};
\node[] at (2.5,1.5) {$b_2 2$};
\node[] at (2.5,0.5){$c_3 1$};
\node[] at (3.5,1.5) {$b_3 2$};
\node[] at (3.5,0.5){$c_4 1$};

\node[] at (7.5,1.5) {$b_{l-3} 2$};
\node[] at (7.5,0.5){$c_{l-2} 1$};
\node[] at (8.5,1.5) {$b_{l-2} 2$};
\node[] at (8.5,0.5){$c_{l-1} 1$};
\node[] at (9.5,1.5) {$b_{l-1} 2$};
\node[] at (9.5,0.5){$c_l 1$};
\node[] at (10.5,1.5) {$b_l 2$};
\node[] at (10.5,0.5){$d 2$};

\node[] at (-1,1) {$T^-=$};

\draw[->] (0.8,1.5) to [] (1.2,1.5);
\draw[->] (9.8,0.5) to [] (10.2,0.5);
\draw[->] (0.8,0.8) to [] (2.2,1.4);
\draw[->] (1.8,0.8) to [] (3.2,1.4);
\draw[->] (2.8,0.8) to [] (4.2,1.4);
\draw[->] (5.8,0.8) to [] (7.2,1.4);
\draw[->] (6.8,0.8) to [] (8.2,1.4);
\draw[->] (7.8,0.8) to [] (9.2,1.4);

\end{tikzpicture}\]
\[\begin{tikzpicture}[scale=1,every node/.style={scale=0.8}]
\draw [] (0,1) rectangle (1,2);
\draw [] (0,0) rectangle (1,1);
\draw [] (1,1) rectangle (2,2);
\draw [] (1,0) rectangle (2,1);
\draw [] (2,1) rectangle (3,2);
\draw [] (2,0) rectangle (3,1);
\draw [] (3,1) rectangle (4,2);
\draw [] (3,0) rectangle (4,1);

\draw [dashed, color=gray] (3,0) rectangle (8,1);
\draw [dashed, color=gray] (3,1) rectangle (8,2);
\draw [] (10,1) rectangle (11,2);
\draw [] (10,0) rectangle (11,1);
\draw [] (7,1) rectangle (8,2);
\draw [] (7,0) rectangle (8,1);
\draw [] (8,1) rectangle (9,2);
\draw [] (8,0) rectangle (9,1);
\draw [] (9,1) rectangle (10,2);
\draw [] (9,0) rectangle (10,1);

\node[] at (0.5,1.5) {$a 1$};
\node[] at (0.5,0.5){$b_1 1$};
\node[] at (1.5,1.5) {$c_1 2$};
\node[] at (1.5,0.5){$b_2 1$};
\node[] at (2.5,1.5) {$c_2 2$};
\node[] at (2.5,0.5){$b_3 1$};
\node[] at (3.5,1.5) {$c_3 2$};
\node[] at (3.5,0.5){$b_4 1$};

\node[] at (7.5,1.5) {$c_{l-3} 2$};
\node[] at (7.5,0.5){$b_{l-2} 1$};
\node[] at (8.5,1.5) {$c_{l-2} 2$};
\node[] at (8.5,0.5){$b_{l-1} 1$};
\node[] at (9.5,1.5) {$c_{l-1} 2$};
\node[] at (9.5,0.5){$b_l 1$};
\node[] at (10.5,1.5) {$c_l 2$};
\node[] at (10.5,0.5){$d 2$};

\node[] at (-1,1) {$T^+=$};

\draw[->] (0.8,1.5) to [] (1.2,1.5);
\draw[->] (9.8,0.5) to [] (10.2,0.5);
\draw[->] (0.8,0.8) to [] (2.2,1.4);
\draw[->] (1.8,0.8) to [] (3.2,1.4);
\draw[->] (2.8,0.8) to [] (4.2,1.4);
\draw[->] (5.8,0.8) to [] (7.2,1.4);
\draw[->] (6.8,0.8) to [] (8.2,1.4);
\draw[->] (7.8,0.8) to [] (9.2,1.4);

\end{tikzpicture}\]
%where the labels satisfy:
%\[1 \leq a<c_1<b_2<c_3<\cdots<b_l<d \leq K, \hspace{5mm} 1 \leq a<b_1<c_2<b_3<\cdots<c_l<d \leq K.\]
\end{thm}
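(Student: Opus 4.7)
The proof splits naturally into two parts: a structural claim that every primitive semi-standard linked pair $T^\pm$ has exactly one $a$-type label and one $d$-type label, and a counting step showing that only finitely many pairs of the resulting form exist.

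For the structural part, I would argue by contradiction. Suppose $T^\pm$ is primitive with $k\geq 2$, and let $a_k$ denote the rightmost first-row entry of the form $\ast 1$ in $T^-$. By construction of the initial set and Lemma \ref{lem:splitting}, $I^a_k$ provides the leftmost portion of a sub-tableau whose removal preserves semi-standardness \emph{locally}. The idea is to extend $I^a_k$ rightward iteratively, exactly as illustrated in Example \ref{eg:thm}: between two consecutive $d$-type labels one appends pairs $(c_?, b_?)$ whose indices differ by a fixed step, and the step increases by one each time a $d$-type entry is crossed. At each stage, Lemma \ref{lem:splitting} together with Propositions \ref{prop:cases}--\ref{prop:cases2} guarantees that both the extracted sub-tableau and the residual tableau remain semi-standard. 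Upon reaching the last $d$-type label, closing off the final column yields a genuine splitting, contradicting primitivity. A symmetric argument, working right-to-left from the leftmost $d$-type label and using the initial sets $I^d_j$, forces also the number of $d$-type labels to be one.

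With $k=1$, the unique $a$-type label lies at the upper-left of both $T^-$ and $T^+$ and the unique $d$-type label at the lower-right. Propositions \ref{prop:cases}--\ref{prop:cases2} then rule out every arrow pattern except the ``shift-by-one'' configuration displayed in the theorem, fixing the shape of $T^\pm$ as stated. For the finiteness count, any semi-standard pair of the prescribed form is determined by a tuple $(a, b_1, \ldots, b_l, c_1, \ldots, c_l, d)\in\{1,\ldots,K\}^{2l+2}$, and combining the row and column inequalities of both $T^+$ and $T^-$ yields the chain
\[
a < \min(b_1,c_1) \leq \max(b_1,c_1) < \min(b_2,c_2) \leq \cdots < \min(b_l,c_l) \leq \max(b_l,c_l) < d.
\]
This forces at least $l+2$ distinct values in $\{1,\ldots,K\}$, hence $l \leq K-2$. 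Together with Theorem \ref{thm:lm}, the set of primitive semi-standard linked pairs is finite, producing the desired finite SAGBI basis.

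The main obstacle is managing the inductive extension of the initial set across $d$-type labels: each crossing requires recalibrating the step size and branching on whether a comparison such as $a_j < c_{x+t}$ versus $a_j \geq c_{x+t}$ holds (as in the two cases of the initial set definition). Verifying that these recalibrations produce a consistent, globally semi-standard sub-tableau and complement --- and handling edge cases where several $d$-type labels coincide or abut the right boundary, as in the last part of the proof of Proposition \ref{prop:cases} --- is where the bulk of the combinatorial bookkeeping lies.
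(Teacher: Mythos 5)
Your plan is essentially the paper's proof: reduce to $k=1$ by growing the initial set of the rightmost $a$-label into a sub-tableau that splits $T^\pm$ (contradicting primitivity), then bound $l$ by a strictly increasing chain of labels in $\{1,\dots,K\}$. Two details as you state them are wrong, though both are repairable. First, the gap between consecutive columns added to the sub-tableau \emph{decreases} by one each time a $d$-label is crossed (from $t$ to $t-1$ to $t-2$, and so on); this decrease is what forces the process to terminate at the last $d$, so "the step increases by one" gets the mechanism backwards. Relatedly, your separate "symmetric argument" to show there is only one $d$-label is vacuous (the number of $d$s equals the number of $a$s, so $k=1$ already gives it); the construction from $d_{s+1}$ is instead needed \emph{inside} the $k>1$ argument, where the sub-tableaux growing rightward from $a_k$ and leftward from $d_{s+1}$ either share their columns (yielding an immediate splitting) or force the step-size reduction. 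Second, your finiteness chain is false as stated: $\max(b_j,c_j)<\min(b_{j+1},c_{j+1})$ does not follow from semi-standardness, because the rows are only weakly increasing. For instance $b_j=b_{j+1}=5$, $c_j=3$, $c_{j+1}=7$ satisfies all the column inequalities $b_j<c_{j+1}$ and $c_j<b_{j+1}$ coming from $T^-$ and $T^+$, yet $\max(b_j,c_j)=\min(b_{j+1},c_{j+1})$. The correct chains alternate between the two tableaux, e.g.\ $a<c_1<b_2<c_3<\cdots<b_l<d$ and $a<b_1<c_2<b_3<\cdots<c_l<d$, each consisting of $l+2$ strictly increasing values, which still yields $l\le K-2$ and hence finiteness.
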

\begin{proof}
Note that every primitive semi-standard tableaux pair $T^\pm$ with $k=1$ is of the form in the theorem (recall that $k$ is the number $a$s, i.e. the number of labels with second digit $1$ in the first row of $T^+$). We first show that the set as described in the theorem is finite. Note that as the tableaux are semi-standard, the labels satisfy two chains of inequalities:
\[a<c_1<b_2<c_3<\cdots<b_l<d, \hspace{5mm} a<b_1<c_2<b_3<\cdots<c_l<d.\]
Since every label lies between $1$ and $K$, as there are $K$ arrows in the quiver, this is only possible if $l+2 \leq K$. Therefore, only tableaux pairs of dimensions $2 \times j$, $j \leq K-1$ can possibly be primitive. This implies finiteness of the SAGBI basis.

It remains to show that if $T^\pm$ is a primitive semi-standard tableaux pair, then $k=1$. This uses the construction of the initial set above. Suppose that $k>1$. Begin with the furthest right $a$ label, $a_k$, and take its initial set. Near $a_k$, the tableaux pair looks like:
\[\begin{tikzpicture}[scale=1,every node/.style={scale=0.8}]
\draw [] (2,1) rectangle (3,2);
\draw [] (0,1) rectangle (1,2);
\draw [] (0,0) rectangle (1,1);
\draw [] (4,1) rectangle (5,2);

\draw [dashed, color=gray] (-1,0) rectangle (6,1);
\draw [dashed, color=gray] (-1,1) rectangle (6,2);

\node[] at (0.5,1.5) {$a_k1$};
\node[] at (0.5,0.5) {$c_{y+t}1$};
\node[] at (2.5,1.5) {$b_{x+t} 2$};
\node[] at (4.5,1.5) {$b_{z_-} 2$};
\node[] at (-2,1) {$T^-=$};

\draw[->] (0.8,0.8) to [] (4.2,1.3);
\draw[->] (0.8,1.5) to [] (2,1.5);

\end{tikzpicture}\]

\[\begin{tikzpicture}[scale=1,every node/.style={scale=0.8}]
\draw [] (2,1) rectangle (3,2);
\draw [] (0,1) rectangle (1,2);
\draw [] (0,0) rectangle (1,1);
\draw [] (4,1) rectangle (5,2);

\draw [dashed, color=gray] (-1,0) rectangle (6,1);
\draw [dashed, color=gray] (-1,1) rectangle (6,2);

\node[] at (0.5,1.5) {$a_k1$};
\node[] at (0.5,0.5) {$b_{x+t}1$};
\node[] at (2.5,1.5) {$c_{y+t} 2$};
\node[] at (4.5,1.5) {$c_{z_+}2$};
\node[] at (-2,1) {$T^+=$};

\draw[->] (0.8,0.8) to [] (4.2,1.3);
\draw[->] (0.8,1.5) to [] (2,1.5);

\end{tikzpicture}\]
In $T^-$, if $p1$ is a label weakly to left of $c_{y+t}$, then $p \leq c$. There are $y+t+k-1$ such labels, and $y+2t-1$ of them have an arrow to the first row. If $p1$ is a label strictly to the right of $c_{y+t}$, then $p1$ must appear in the second row of $T^-$, and hence $p \geq c$. Therefore $z_-=y+2t$.  Similar reasoning demonstrates that $z_+=x+2t$. 

As long as only $b$s and $c$s appear, this pattern continues: the target of the arrow from $b_p 1$ in $T^+$ is $c_{p+t} 2$, and the target of the arrow from $c_p 1$ in $T^-$ is $b_{p+t} 2$.

Until a $d$ type box is passed in either $T^\pm$, we add boxes to our initial set by adding $t$ to the $b$ and $c$ labels appearing in $I^a_k$. This determines a sub-tableau that, other than for the initial set, contains columns $t$ positions apart. That is, we obtain either the set
\[\{a_k,c_{x+t},b_{x+t}\} \cup \{c_{x+\alpha t},b_{x+\alpha t}: 2 \leq \alpha \leq A\} \]
or
\[\{a_k,c_{y+t},b_{y+t}\} \cup \{c_{y+\alpha t},b_{y+\alpha t}: 2 \leq \alpha \leq A\}, \]
where $A$ is an integer that ensures that all elements are to the left of $d_{s+1}$. Recall that  $d_{s+1}$ is the first $d$ on the right of $a_k$ We illustrate this in the case where $I^a_k:=\{a_k,c_{x+t},b_{x+t}\}$ and $x \leq y$:
\[\begin{tikzpicture}[scale=1,every node/.style={scale=0.8}]
\draw [fill=yellow] (-2,0) rectangle (-1,1);
\draw [fill=yellow] (2,1) rectangle (3,2);
\draw [fill=yellow] (2,0) rectangle (3,1);
\draw [fill=yellow] (0,1) rectangle (1,2);
\draw [] (0,0) rectangle (1,1);

\draw [fill=yellow] (4,0) rectangle (5,1);
\draw [fill=yellow] (4,1) rectangle (5,2);
\draw [fill=yellow] (6,0) rectangle (7,1);
\draw [fill=yellow] (6,1) rectangle (7,2);

\draw [dashed, color=gray] (-3,0) rectangle (6,1);
\draw [dashed, color=gray] (-3,1) rectangle (6,2);

\node[] at (0.5,1.5) {$a_k1$};
\node[] at (0.5,0.5) {$c_{y+t}1$};
\node[] at (-1.5,0.5) {$c_{x+t}1$};

\node[] at (2.5,1.5) {$b_{x+t} 2$};
\node[] at (2.5,0.5) {$c_{x+2t} 2$};

\node[] at (4.5,1.5) {$b_{x+2t} 2$};
\node[] at (4.5,0.5) {$c_{x+3t} 2$};
\node[] at (6.5,1.5) {$b_{x+3t} 2$};
\node[] at (6.5,0.5) {$c_{x+4t} 2$};

\node[] at (-4,1) {$T^-=$};

%\draw[->] (0.8,0.8) to [] (4.2,1.3);
%\draw[->] (0.8,1.5) to [] (2,1.5);

\end{tikzpicture}\]

\[\begin{tikzpicture}[scale=1,every node/.style={scale=0.8}]
\draw [fill=yellow] (2,1) rectangle (3,2);
\draw [] (4,1) rectangle (5,2);
\draw [fill=yellow] (0,1) rectangle (1,2);
\draw [fill=yellow] (0,0) rectangle (1,1);
\draw [fill=yellow] (2,0) rectangle (3,1);
\draw [fill=yellow] (2,1) rectangle (3,2);
\draw [fill=yellow] (4,0) rectangle (5,1);
\draw [fill=yellow] (4,1) rectangle (5,2);
\draw [fill=yellow] (6,0) rectangle (7,1);
\draw [fill=yellow] (6,1) rectangle (7,2);

\draw [] (6,1) rectangle (7,2);

\draw [dashed, color=gray] (-1,0) rectangle (8,1);
\draw [dashed, color=gray] (-1,1) rectangle (8,2);

\node[] at (0.5,1.5) {$a_k1$};
\node[] at (0.5,0.5) {$b_{x+t}1$};
\node[] at (2.5,1.5) {$c_{x+t} 2$};
\node[] at (2.5,0.5) {$b_{x+2t} 2$};

\node[] at (4.5,1.5) {$c_{x+2t} 2$};
\node[] at (4.5,0.5) {$b_{x+3t} 2$};

\node[] at (6.5,1.5) {$c_{x+3t} 2$};
\node[] at (6.5,0.5) {$b_{x+4t} 2$};
\node[] at (-2,1) {$T^+=$};

%\draw[->] (0.8,0.8) to [] (4.2,1.3);
%\draw[->] (0.8,1.5) to [] (2,1.5);

\end{tikzpicture}\]

There is similar start to sub-tableaux beginning with initial set of $d_{s+1}$, and moving to the left. We set $\underline{x},\underline{y},\underline{t},\underline{s}$ to be indices associated with $d_{s+1}$. As $\underline{s}=0$, note that
\[\underline{t}=k-(s+1)+1-\underline{s}=k-s=t.\]
Therefore the columns in the sub-tableaux starting at $d_{s+1}$ are also $t$ apart. They either completely coincide with the columns from $a_k$ or are disjoint. If they coincide, we have a sub-tableaux pair of $T^\pm$, which, by Lemma \ref{lem:splitting} is semi-standard and, once removed, what remains is also semi-standard.  This contradicts primitivity. 

Otherwise, the columns from the initial set of $a_k$ have no common labels with the initial set of $d_{s+1}$. In this case, we do the following. While adding boxes to our initial set starting at $a_k$, when we reach a box that lies between $d_{s+1}$ and one of the other two labels in $I^d_{s+1}$, we add the other box not in the column of $T^\pm$ but in the column that would appear when $I^d_{s+1}$ is removed. We illustrate this in the case where  $I^a_k:=\{a_k,c_{x+t},b_{x+t}\}$, $I^d_{s+1}=\{d_{s+1},c_{\hat{\underline{x}}-t},b_{\hat{\underline{x}}-t}\}$, and $\hat{\underline{x}} \leq \hat{\underline{y}}$. We can define $A$ now more precisely as the smallest integer satisfying 
\[x+At \geq \hat{x}-t,\]
(this could be expressed using the floor function). Note that the inequality is necessarily strict as we have assumed that $x \not \equiv \hat{x}$ modulo $t$.

\[\begin{tikzpicture}[scale=1.4,every node/.style={scale=0.7}]
\draw [fill=yellow] (2,0) rectangle (3,1);
\draw [fill=yellow] (2,1) rectangle (3,2);
\draw [fill=orange] (4,1) rectangle (5,2);
\draw [fill=orange] (4,0) rectangle (5,1);
\draw [fill=orange] (6,1) rectangle (7,2);
\draw [fill=orange] (11,0) rectangle (12,1);
\draw [fill=yellow] (9,1) rectangle (10,2);
\draw [] (9,0) rectangle (10,1);
\draw [fill=yellow] (8,0) rectangle (9,1);

\draw [dashed, color=gray] (1.5,0) rectangle (12.5,1);
\draw [dashed, color=gray] (1.5,1) rectangle (12.5,2);

\node[] at (2.5,0.5) {$b_{x+(A-1)t} 2$};
\node[] at (2.5,1.5) {$c_{x+A t} 1$};

\node[] at (4.5,1.5) {$b_{\hat{\underline{x}}-2t} 2$};
\node[] at (4.5,0.5) {$c_{\hat{\underline{x}}-t} 1$};

\node[] at (9.5,1.5) {$b_{x+At} 2$};
\node[] at (9.5,0.5) {$c_{x+(A+1)t} 1$};
\node[] at (8.5,0.5) {$c_{x+At+(t-1)} 1$};

\node[] at (6.5,1.5) {$b_{\hat{\underline{x}}-t}2$};
\node[] at (11.5,0.5) {$d_{s+1}2$};

\node[] at (1,1) {$T^-=$};

\end{tikzpicture}\]
\[\begin{tikzpicture}[scale=1.4,every node/.style={scale=0.7}]
\draw [fill=yellow] (2,0) rectangle (3,1);
\draw [fill=yellow] (2,1) rectangle (3,2);
\draw [fill=orange] (4,1) rectangle (5,2);
\draw [fill=orange] (4,0) rectangle (5,1);
\draw [fill=orange] (6,1) rectangle (7,2);
\draw [fill=orange] (6,0) rectangle (7,1);
\draw [fill=yellow] (8,1) rectangle (9,2);
\draw [fill=yellow] (8,0) rectangle (9,1);

\draw [dashed, color=gray] (1.5,0) rectangle (10,1);
\draw [dashed, color=gray] (1.5,1) rectangle (10,2);

\node[] at (2.5,0.5) {$b_{x+At} 2$};
\node[] at (2.5,1.5) {$c_{x+(A-1) t} 1$};

\node[] at (4.5,1.5) {$c_{\hat{\underline{x}}-2t} 2$};
\node[] at (4.5,0.5) {$b_{\hat{\underline{x}}-t} 1$};

\node[] at (6.5,1.5) {$c_{\hat{\underline{x}}-t} 2$};
\node[] at (6.5,0.5) {$d_{s+1}2$};

\node[] at (8.5,1.5) {$c_{x+At} 2$};
\node[] at (8.5,0.5) {$b_{x+At+(t-1)} 1$};

\node[] at (1,1) {$T^+=$};

\end{tikzpicture}\]
Here we have highlighted in yellow the sub-tableaux starting at $a_k$, and in orange the sub-tableaux starting at $d_{s+1}$. Until we pass $d_{s+2}$, we can keep adding boxes to the sub-tableaux starting at $a_k$ so that columns are now $t-1$ apart. That is, in the case illustrated above, the sub-tableaux contain labels
\[\{a_k,c_{x+t},b_{x+t}\} \cup \{c_{x+\alpha t},b_{x+\alpha t}: 2 \leq \alpha \leq A,\} \cup  \{c_{x+A t+\alpha (t-1)},b_{x+At + \alpha (t-1)}: 2 \leq \alpha \leq B\},\]
where $B$ is an integer that ensures that all elements are to the left of $d_{s+2}$. 
If $I^a_k=\{a_k,c_{y+t},b_{y+t}\},$ then the sub-tableaux contains the labels
\[\{a_k,c_{y+t},b_{y+t}\} \cup \{c_{y+\alpha t},b_{y+\alpha t}: 2 \leq \alpha \leq A,\}  \cup \{c_{y+A t+\alpha (t-1)},b_{y+At + \alpha (t-1)}: 2 \leq \alpha \leq B\} .\] 
The situation when $I^d_{s+1}=\{d_{s+1},c_{\hat{\underline{\hat{y}}}-t},b_{\hat{\underline{y}}-t}\}$ and $\hat{\underline{x}} \leq \hat{\underline{y}}$ is very similar: the only change is in the definition of $A$ as the smallest integer satisfying
 \[x+At \geq \underline{\hat{y}}-t.\]
 
 This process continues for each $d_{s+p}$: either we end the sub-tableaux using the initial set of $d_{s+p}$, or we by-pass it using the same method as for $d_{s+1}$, and end up with a sub-tableau where the columns at this stage are now $t-p$ apart, and the $\underline{t}$ for $d_{s+p+1}$ is $t-p$. If we pass $d_{k-1}1$, then the columns are $1$ apart, and so we necessarily end our sub-tableaux using the initial set coming from $d_k$. 
 
The final result is a sub-tableaux pair that induces a semi-standard splitting of $T^\pm$. The fact that all tableaux in the splitting are semi-standard follows from Lemma \ref{lem:splitting} and its $d$-analogue. This concludes the proof of the theorem. 
\end{proof}

\begin{rem} Minimal sets of generators of $\SI(Q,(2,2))$, where $Q$ is the Kronecker quiver with $K$, have been considered in \cite{domokos2} in characteristic zero, in \cite{domokoschar} in any characteristic, and as a special case of \cite{lopatin}. The sets presented in these papers are of course smaller than the SAGBI basis presented in Theorem \ref{thm:2thm}. Interestingly, in characteristic 2, the generating set presented in \cite{domokoschar} has some similarities with the SAGBI basis. In degree one and two the generators are the same. The higher degree generators, $\xi(x_{k_1},\dots,x_{k_{2q}})$, for $1 \leq k_1<\cdots<k_{2q} \leq K$, correspond to the semi-invariants arising from pairs of linked tableaux described as follows.  In $T^+$, the labels with first digit $k_i$ and $k_{q+i}$ appear in the same column, for $i=1,\dots,q$. In $T^-$, the labels with first digit $i$ and $q+i-1$ appear in the same column for $i=2,\dots,q$, and $1$ and $2q$ appear in the same column. Different choices of $T^\pm$ satisfying these conditions will give the same semi-invariants up to sign. 
\end{rem}

\section{An example of a toric degeneration}\label{sec:example}
In this section, we describe a toric degeneration of a Fano quiver moduli space using the SAGBI basis described above. Let $Q$ be the Kronecker quiver with three arrows and dimension vector $\br=(2,3)$. 

Let $\theta:=(-9,6)$ be the anti-canonical stability condition. Then the unstable locus with respect to $\theta$ has co-dimension at least 2 by \cite[Proposition 5.2]{reineke}. By \cite{fanoquiver}, the stable and semi-stable locus coincide. The GIT quotient $M_\theta(Q,\br)$ is a six-dimensional smooth Fano variety with Fano index $3$. Its Cox ring coincides with the semi-invariant ring, and therefore a SAGBI basis is given by the linked tableaux in Example \ref{eg:233}. Therefore, there exists a toric degeneration from $M_\theta(Q,\br)$ to the toric variety given by the leading terms of the semi-invariants. This toric variety corresponds to the Fano polytope $P$ with vertices:
\begin{gather}
\begin{aligned} \label{eqn:verts}\{
    (2, -1, 2, 0, -1, 0),
    (2, -1, 2, 1, -1, -1),
    (1, 0, 1, 0, -1, 0),
    (1, 0, 1, 1, -1, -1),\\
    (0, 0, 0, 0, 0, 1),
    (0, 0, 0, 0, 1, 0),
    (0, 0, 0, 1, 0, 0),
    (0, 0, 1, 0, 0, 0),
    (1, -1, 0, -1, 0, 0),\\
     (2, -1, 1, 0, 0, 0),
    (1, -1, 1, -1, -1, 0),
    (-1, 1, -1, 0, 1, 0),
    (-1, 0, -1, 0, 0, 0)
\}.\end{aligned}\end{gather}

We interpret this degeneration in the context of mirror symmetry for Fano varieties. For some background on mirror symmetry for Fano varieties, see \cite{fanomanifolds, CoatesCortiGalkinKasprzyk2016}. The mirror to a deformation class of $n$-dimensional smooth Fano varieties, also known as its (weak) \emph{Landau--Ginzburg model}, is a  mutation class of certain Laurent polynomials in $n$ variables. The Laurent polynomials that are mirror to smooth Fano varieties are, conjecturally, \emph{rigid maximally mutable} Laurent polynomials \cite{Coates_2021}. 

To check that representatives $X$ and $f$ belong to mirror classes, one shows that two power series coincide. The power series associated to a Fano variety $X$ is the \emph{quantum period}, which is built out of genus zero Gromov--Witten invariants, and hence deformation invariant. The power series associated to the Laurent polynomial $f$ is called the \emph{classical period}; it is easy to compute and is mutation invariant. See \cite{CoatesCortiGalkinKasprzyk2016} for a definition of both periods. Checking that the two periods coincide can be very difficult, as it is often hard or impossible to find a closed formula for the quantum period of a Fano variety. For Fano toric complete intersections, this is a consequence of the celebrated mirror theorem \cite{givental, lian}.  

The Newton polytope $P$ of a Laurent polynomial $f$ which is a \emph{rigid maximally-mutable} Laurent polynomial \cite{Coates_2021} is a Fano polytope, i.e. $P$ spans the fan of a (singular) toric Fano variety $X_P$. Part of the mirror symmetry conjectures is that $X_P$ can smooth to a Fano variety $X$ which is mirror to $f$. Thus, if we want to find a conjectural mirror to a Fano variety $X$, we find a toric degeneration of $X$ to some $X_P$. This determines the monomials of $f$, and coefficients are then chosen to ensure that $f$ is rigid maximally mutable. 

The toric variety associated to the face-fan of the polytope $P$ with vertices \eqref{eqn:verts} is Gorenstein Fano, with terminal singularities. Since $P$ contains no other lattice points except for the origin and its vertices, there exists a unique rigid maximally-mutable Laurent polynomial supported on $P$. In other words,
\begin{align*}
f = x_1^2 x_3^2 x_4/(x_2 x_5 x_6) + x_1^2 x_3^2/(x_2 x_5) + x_1^2 x_3/x_2 + x_1 x_3 x_4/(x_5 x_6) \\
+ x_1 x_3/x_5 + x_1 x_3/(x_2 x_4 x_5) + x_1/(x_2 x_4) + x_3 + x_4 + x_5 + x_6 + x_2 x_5/(x_1 x_3) + 
1/(x_1 x_3),\end{align*}
where each monomial corresponds to one of the 13 vertices of $P$ and $f$ has coefficient $1$ for each of these. We can now consider the mirror condition for $M_\theta(Q,\br)$ and $f$.

 The first 20 terms of the period sequence of this Laurent polynomial are
\[[ 1, 0, 0, 18, 0, 0, 4590, 0, 0, 1728720, 0, 0, 876610350, 0, 0, 520461209268, 
0, 0, 343838539188144, 0].\]
As a sanity check, the periodicity of pairs of zeroes in this sequence confirms the Fano index $3$ of $M_\theta(Q,\br)$. Conjecturally, the sequence should coincide with the quantum period of $M_\theta(Q,\br)$. The Fano variety $M_\theta(Q,\br)$ can also be described \cite{belmans} as the zero locus of a generic section of $E^* \otimes \det(E)=\wedge^5E$ on $\Gr(8,6)$, where $E$ is the rank six tautological quotient bundle. Then, terms of the quantum period of  $M_\theta(Q,\br)$ can be computed using the Abelian/non-Abelian correspondence.  In private communication, P. Belmans has verified that the first $20$ terms of this computation coincide with the $20$ terms in the classical period of $f$ listed above. 
 
%\section{Declarations}
%\textbf{Funding and/or Conflicts of interests/Competing interests: }
%
%The authors have no relevant financial or non-financial interests to disclose.
%
%The authors have no conflicts of interest to declare that are relevant to the content of this article.
%
%All authors certify that they have no affiliations with or involvement in any organization or entity with any financial interest or non-financial interest in the subject matter or materials discussed in this manuscript.
%
%The authors have no financial or proprietary interests in any material discussed in this article.

\bibliographystyle{amsplain}
\bibliography{epsrc}
\end{document}